\theoremstyle{plain}
\newtheorem{lem}{Lemma}[section]
\newtheorem{cor}[lem]{Corollary}
\newtheorem{prop}[lem]{Proposition}
\newtheorem{thm}[lem]{Theorem}
\theoremstyle{definition}
\newtheorem{defn}[lem]{Definition}
\newtheorem{ex}[lem]{Example}
\newtheorem{disc}[lem]{Remark}
\newtheorem{notn}[lem]{Notation}
\newtheorem{fact}[lem]{Fact}
\newcommand{\cat}[1]{\mathcal{#1}}
\newcommand{\catd}{\cat{D}}
\newcommand{\pd}{\operatorname{pd}}
\newcommand{\id}{\operatorname{id}}	
\newcommand{\fd}{\operatorname{fd}}
\newcommand{\depth}{\operatorname{depth}}	
\newcommand{\amp}{\operatorname{amp}}
\newcommand{\mspec}{\operatorname{m-Spec}}
\newcommand{\HH}{\operatorname{H}}
\newcommand{\Hom}{\operatorname{Hom}}	
\newcommand{\spec}{\operatorname{Spec}}
\newcommand{\shift}{\mathsf{\Sigma}}
\newcommand{\cone}{\operatorname{Cone}}
\newcommand{\ideal}[1]{\mathfrak{#1}}
\newcommand{\m}{\ideal{m}}
\newcommand{\p}{\ideal{p}}
\newcommand{\fm}{\ideal{m}}
\newcommand{\fa}{\ideal{a}}
\newcommand{\fb}{\ideal{b}}
\newcommand{\ol}{\overline}
\newcommand{\supp}{\operatorname{supp}}
\newcommand{\Supp}{\operatorname{Supp}}
\newcommand{\VE}{\operatorname{V}}
\newcommand{\cosupp}{\operatorname{co-supp}}
\newcommand{\bbz}{\mathbb{Z}}
\newcommand{\xra}{\xrightarrow}
\newcommand{\vf}{\varphi}
\newcommand{\y}{\mathbf{y}}
\newcommand{\x}{\underline{x}}
\renewcommand{\geq}{\geqslant}
\renewcommand{\leq}{\leqslant}
\newcommand{\Rhom}[3][R]{\mathbf{R}\!\operatorname{Hom}_{#1}(#2,#3)}	
\newcommand{\Lotimes}[3][R]{#2\otimes^{\mathbf{L}}_{#1}#3}
\newcommand{\Otimes}[3][R]{#2\otimes_{#1}#3}
\renewcommand{\Hom}[3][R]{\operatorname{Hom}_{#1}(#2,#3)}	
\newcommand{\Tor}[4][R]{\operatorname{Tor}^{#1}_{#2}(#3,#4)}
\newcommand{\LL}[2]{\mathbf{L}\Lambda^{\ideal{#1}}(#2)}
\newcommand{\RG}[2]{\mathbf{R}\Gamma_{\ideal{#1}}(#2)}
\newcommand{\width}{\operatorname{width}}
\newcommand{\Comp}[2]{\widehat{#1}^{\ideal{#2}}}
\newcommand{\rad}[1]{\operatorname{rad}(#1)}
\newcommand{\catdfb}{\catd_{\text{b}}^{\text{f}}}
\newcommand{\catdb}{\catd_{\text{b}}}
\newcommand{\catdf}{\catd^{\text{f}}}
\newcommand{\LLno}[1]{\mathbf{L}\Lambda^{\ideal{#1}}}
\newcommand{\RGno}[1]{\mathbf{R}\Gamma_{\ideal{#1}}}
\newcommand{\fromRG}[2]{\varepsilon_{\ideal{#1}}^{#2}}
\newcommand{\fromRGno}[1]{\varepsilon_{\ideal{#1}}}
\newcommand{\toLL}[2]{\vartheta^{\ideal{#1}}_{#2}}
\newcommand{\toLLno}[1]{\vartheta^{\ideal{#1}}}
\newcommand{\PRhom}[3][R]{\mathbf{R}\!\operatorname{Hom}_{#1}\left(#2,#3\right)}	
\newcommand{\PRG}[2]{\mathbf{R}\Gamma_{\ideal{#1}}\left(#2\right)}
\newcommand{\PLL}[2]{\mathbf{L}\Lambda^{\ideal{#1}}\left(#2\right)}
\numberwithin{equation}{lem}
\begin{document}

\bibliographystyle{amsplain}

\author{Sean Sather-Wagstaff}

\address{Department of Mathematical Sciences,
Clemson University,
O-110 Martin Hall, Box 340975, Clemson, S.C. 29634
USA}

\email{ssather@clemson.edu}

\urladdr{https://ssather.people.clemson.edu/}

\thanks{
Sean Sather-Wagstaff was supported in part by a grant from the NSA}

\author{Richard Wicklein}

\address{Richard Wicklein, Mathematics and Physics Department, MacMurray College, 447 East College Ave., Jacksonville, IL 62650, USA}

\email{richard.wicklein@mac.edu}

\title{Adically Finite Chain Complexes}



\keywords{
Adic finiteness; 
co-support;
derived local cohomology;
derived local homology;
support}
\subjclass[2010]{
13B35, 
13C12, 
13D07, 
13D09, 
13D45
}

\begin{abstract}
We investigate the similarities between adic finiteness and homological finiteness for chain complexes over a commutative noetherian ring. 
In particular, we extend the isomorphism properties of certain natural morphisms from homologically finite complexes
to adically finite complexes. 
We do the same for characterizations of certain homological dimensions.
In addition, we study transfer of adic finiteness along ring homomorphisms, 
all with a view toward subsequent applications.
\end{abstract}

\maketitle

\tableofcontents

\section{Introduction} \label{sec130805a}
Throughout this paper let $R$ and $S$ be 
commutative noetherian rings, let $\fa \subsetneq R$ be a proper ideal of $R$, and let $\Comp{R}{a}$ be the $\fa$-adic completion of $R$.
Let $\x=x_1,\ldots,x_n\in R$ be a generating sequence for $\fa$, and consider the Koszul complex $K:=K^R(\x)$.
We work in the derived category $\catd(R)$ with objects the $R$-complexes
indexed homologically
$X=\cdots\to X_i\to X_{i-1}\to\cdots$.
See, e.g., \cite{hartshorne:rad,verdier:cd,verdier:1} and Section~\ref{sec140109b} for background/foundational material.
Isomorphisms in $\catd(R)$ are identified by the symbol $\simeq$.

\

This work is part 2 of a series of papers about support and finiteness conditions for complexes,
with a view toward derived local cohomology and homology.
It builds on our previous paper~\cite{sather:scc}, and it is used in the 
papers~\cite{sather:afbha, sather:afc,sather:asc,sather:elclh}.

In~\cite{sather:scc} we introduce the notion of ``$\fa$-adic finiteness'' for complexes; see Definition~\ref{def120925d}.
For example, an $R$-module $M$ is $\fa$-adically finite if it is $\fa$-torsion and has $\Tor i{R/\fa}M$ finitely generated
for all $i$. In particular, this recovers two standard finiteness conditions as special cases:
first, $M$ is finitely generated (i.e., noetherian) if and only if it is $0$-adically finite;
second, over a local ring, $M$ is artinian if and only if it is adically finite with respect to the ring's maximal ideal. 

One goal of this paper is to extend standard results for finitely generated modules (and homologically finite complexes)
to the $\fa$-adically finite realm. For instance, given a finitely generated $R$-module $M$,
a classical result shows that $\Tor iM-$ commutes not only with arbitrary direct sums, but also
with arbitrary direct products.
One of our main results extends this to the case where $M$ is $\fa$-adically finite.

\begin{thm}\label{thm151128a}
Let $M$ be an $\fa$-adically finite $R$-complex, and
let  $V\in\catd(R)$ such that $\supp_R(V)\subseteq\VE(\fa)$. 
Consider a set $\{N_\lambda\}_{\lambda\in\Lambda}\in\catd_+(R)$ 
such that $\HH_i(N_\lambda)=0$ for all $i< s$ and for all $\lambda\in\Lambda$. 
Consider the natural morphism
$$\Lotimes M{\prod_\lambda N_\lambda}\xra p\prod_\lambda(\Lotimes M{N_\lambda})$$
in $\catd(R)$.
Then the induced morphism
$$\Lotimes V{\Lotimes M{\prod_\lambda N_\lambda}}\to\Lotimes V{\prod_\lambda(\Lotimes M{N_\lambda})}$$
is an isomorphism.
\end{thm}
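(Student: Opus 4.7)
\emph{Proof plan.} Setting $C = \cone(p)$, it suffices to show that $\Lotimes V p$ is an isomorphism in $\catd(R)$. The strategy is to first prove the stronger statement that $\Lotimes K p$ is an isomorphism, where $K = K^R(\x)$, and then to deduce the theorem by a localizing-subcategory argument that propagates the isomorphism from $K$ to an arbitrary $V$ with $\supp_R V \subseteq \VE(\fa)$.

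To compute $\Lotimes K p$, associativity of the derived tensor product rewrites the source as $\Lotimes{(\Lotimes K M)}{\prod_\lambda N_\lambda}$. On the target, because $K$ is a bounded complex of finitely generated free $R$-modules, the functor $\Lotimes K -$ agrees with the underived $K \otimes_R -$ and commutes termwise with arbitrary products, so
\[
\Lotimes K {\textstyle\prod_\lambda(\Lotimes M {N_\lambda})} \simeq \textstyle\prod_\lambda \Lotimes{(\Lotimes K M)}{N_\lambda}.
\]
Under these identifications $\Lotimes K p$ becomes the natural product-morphism for the complex $M' := \Lotimes K M$. A standard characterization of $\fa$-adic finiteness from~\cite{sather:scc} asserts that $M'$ is homologically finite. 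The classical fact that tensoring with a homologically finite complex commutes with products of uniformly bounded-below complexes---reducible to the bounded-complex case via a truncated resolution of $M'$ by finitely generated frees, using the hypothesis $\HH_i(N_\lambda) = 0$ for $i < s$---then shows $\Lotimes K p$ is an isomorphism.

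Now consider the full subcategory
\[
\catw := \{\, W \in \catd(R) \mid \Lotimes W p \text{ is an isomorphism} \,\}.
\]
Because derived tensor preserves triangles, arbitrary direct sums, and homotopy colimits, $\catw$ is a localizing subcategory of $\catd(R)$, and we have just shown $K \in \catw$. By the classification of localizing subcategories of $\catd(R)$ for $R$ commutative noetherian---or, more concretely, by a \v{C}ech-style resolution of $V$ built from the support machinery of~\cite{sather:scc}---every $V \in \catd(R)$ with $\supp_R V \subseteq \VE(\fa) = \supp_R K$ lies in the localizing subcategory generated by $K$, hence in $\catw$. This gives the desired isomorphism.

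The main obstacle is this last step. One might hope to bypass it by showing $\RG{a}{C} \simeq 0$, but the implication $\Lotimes K C \simeq 0 \Rightarrow \RG{a}{C} \simeq 0$ fails in general (it can fail already for a single $\fa$-torsion module). The argument must therefore go through the localizing-subcategory/support classification directly, which is the technology anticipated by the support and co-support framework of~\cite{sather:scc}.
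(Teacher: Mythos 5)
Your argument is correct, but your closing remark reveals a misconception that led you to reach for a heavier tool than necessary in the last step. The treatment of $\Lotimes Kp$ is essentially the paper's: both reductions---that $K$ commutes with arbitrary products, and that the homologically finite complex $\Lotimes KM$ commutes with products of uniformly bounded-below complexes---are instances of the paper's Lemma~\ref{lem151126a}. The real difference lies in the passage from ``$\Lotimes Kp$ is an isomorphism'' to ``$\Lotimes Vp$ is an isomorphism.''

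The claim at the end of your write-up---that the implication $\Lotimes KC\simeq 0\Rightarrow\RG aC\simeq 0$ fails in general, even for a single $\fa$-torsion module---is false over a noetherian ring, and the ``direct'' argument you dismiss is exactly what the paper runs. One has $\supp_R(K)=\VE(\fa)=\supp_R(\RG aR)$ (the first since $K\in\catdfb(R)$ with $\HH_0(K)=R/\fa$, the second by a similar computation), so Fact~\ref{cor130528a} gives
$$\supp_R(\Lotimes KC)=\supp_R(K)\cap\supp_R(C)=\supp_R(\RG aR)\cap\supp_R(C)=\supp_R(\RG aC),$$
and therefore $\Lotimes KC\simeq 0$ iff $\supp_R(C)\cap\VE(\fa)=\emptyset$ iff $\RG aC\simeq 0$; for $\fa$-torsion $C$ all three are equivalent to $C\simeq 0$. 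The paper then concludes at once: $\Lotimes KC\simeq 0$ forces $\supp_R(C)\cap\VE(\fa)=\emptyset$, hence $\supp_R(V)\cap\supp_R(C)=\emptyset$ because $\supp_R(V)\subseteq\VE(\fa)$, hence $\supp_R(\Lotimes VC)=\emptyset$, hence $\Lotimes VC\simeq 0$.

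Your alternative---observing that $\catw=\{W\in\catd(R)\mid\Lotimes Wp\text{ is an isomorphism}\}$ is a localizing subcategory containing $K$, then invoking Neeman's classification of localizing subcategories of $\catd(R)$ to place every $V$ with $\supp_R(V)\subseteq\supp_R(K)$ inside $\catw$---is a valid route over noetherian $R$, and it is in a sense more robust (it does not need the tensor-support formula as a black box). But Neeman's theorem is a substantially deeper input than Fact~\ref{cor130528a}, and in this setting the elementary support calculation suffices; the ``main obstacle'' you identify is not an obstacle.
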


This is contained in Theorem~\ref{lem151126b} from the body of the paper. 
Note the trade-off in this result, as compared to the classical one. We have relaxed the assumptions on $M$,
but we are not claiming that the morphism $p$ is an isomorphism, only that a certain induced morphism is so.
This is the theme of the results of Section~\ref{sec151104b}.
While these results may seem quite specialized, we exhibit applications in~\cite{sather:afc}.

Theorem~\ref{thm151128a} uses Foxby's ``small support'', as do most of the results of this paper;
see Definition~\ref{defn130503a}. 
This is an extremely useful substitute for the standard notion of support (our ``large support'') for finitely generated modules.
For instance, large support allows us to give conditions on two finitely generated modules to decide when their tensor product is non-zero.
This does not work in general for non-finitely generated modules, but one can use small support to detect when at least one of their Tor-modules
is non-zero, which ends up being enough for many applications. 
Thus, small support provides another substitute for finite generation. 

The paper continues with Section~\ref{sec151008a} which tracks some transfer behavior of these notions, specifically,
support, cosupport, and adic finiteness through restriction and extension of scalars.
These are used heavily in the paper~\cite{sather:asc}. 

Section~\ref{sec151206a} contains other results showing how similar adic finiteness is to homological finiteness with respect to homological dimensions. 
For instance, the next result, contained in Theorem~\ref{prop151115a} below is well known when $X$ is homologically finite; it is somewhat surprising to us that it holds in this generality.

\begin{thm}\label{prop151115aa}
Let $X\in\catdb(R)$ be $\fa$-adically finite. If $X$ is locally of finite flat dimension, then $\pd_R(X)<\infty$.
Moreover, one has $\pd_R(X)=\fd_R(X)$.
\end{thm}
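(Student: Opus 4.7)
The plan is to pass from $X$ to $Y := X \lotimes K$, where $K = K^R(\x)$ is the Koszul complex on the generating sequence $\x$ of $\fa$. Because $X$ is $\fa$-adically finite (Definition~\ref{def120925d}), $Y$ is homologically finite, thanks to standard results of~\cite{sather:scc} relating $\fa$-adic finiteness to finite generation of Koszul homology.

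Next, since $K$ is a perfect $R$-complex, $K_\fp$ is perfect over $R_\fp$ for every prime $\fp$, so local finiteness of $\fd_{R_\fp}(X_\fp)$ implies local finiteness of $\fd_{R_\fp}(Y_\fp)$; thus $Y$ is locally of finite flat dimension. Applying the classical result for homologically finite complexes that are locally of finite flat dimension (in the Auslander--Buchsbaum--Foxby tradition) to $Y$ then yields
\[
\pd_R(Y) \;=\; \fd_R(Y) \;<\; \infty.
\]

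The remaining and most substantial task is transferring finiteness of $\pd$ and $\fd$ back from $Y$ to $X$. I would invoke (or establish beforehand in Section~\ref{sec151206a}) a characterization asserting that, for $\fa$-adically finite $X$, $\pd_R(X)$ (respectively $\fd_R(X)$) is finite if and only if $\pd_R(X \lotimes K)$ (respectively $\fd_R(X \lotimes K)$) is finite; the justification relies on $X$ being derived $\fa$-torsion, i.e.\ $X \simeq \RG{a}{X} \simeq X \lotimes \cech{\x}$, together with \v{C}ech-to-Koszul comparisons and careful bookkeeping on resolutions. Once both dimensions are known to be finite, the equality $\pd_R(X) = \fd_R(X)$ follows from the general inequality $\fd_R(X) \le \pd_R(X)$ combined with an argument that the reverse inequality also holds under $\fa$-adic finiteness, again by reducing to $Y$ via the same transfer.

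The main obstacle is precisely this transfer step: the reduction $X \leadsto Y$ to the homologically finite setting is routine, but recovering information about $\pd_R(X)$ from $\pd_R(Y)$ is exactly where the derived $\fa$-torsion structure of $X$ (as opposed to mere homological finiteness of $Y$) plays an essential role, and where the preparatory machinery of Section~\ref{sec151206a} must do its work.
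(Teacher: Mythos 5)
Your outline coincides with the paper's own proof of Theorem~\ref{prop151115a}: replace $X$ by $\Lotimes KX\in\catdfb(R)$, invoke the classical results for homologically finite complexes, and transfer back. The genuine gap is the transfer step itself, which you leave as an unproven black box (and correctly identify as the main obstacle). The paper's transfer is quantitative, not a mere finiteness equivalence: Lemma~\ref{lem151206a} shows that $\fd_R(\Lotimes KX)=n+\fd_R(X)$ and $\pd_R(\Lotimes KX)=n+\pd_R(X)$ whenever $\supp_R(X)\subseteq\VE(\fa)$. Its proof combines the test-module characterizations of $\fd$ and $\pd$ from Avramov--Foxby with the sup/inf formulas for Koszul homology of complexes supported in $\VE(\fa)$ (Lemmas~\ref{lem150604a1} and~\ref{lem150604a2}); the engine there is a Nakayama-type observation that a nonzero $\fa$-torsion homology module cannot admit injective multiplication by an element of $\fa$, so tensoring with $K$ shifts $\sup$ by exactly $n$ and preserves $\inf$. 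This is precisely where adic finiteness and small support do their work, and that idea is absent from your sketch.

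Two further points. First, the statement you propose to invoke --- that $\pd_R(X)$ is finite if and only if $\pd_R(\Lotimes KX)$ is finite, and similarly for $\fd$ --- would not suffice for the final claim $\pd_R(X)=\fd_R(X)$: outside the homologically finite setting, finiteness of both dimensions does not force their equality (already a flat nonprojective module has $\fd=0<\pd$), so your ``reverse inequality by the same transfer'' needs the exact shift by $n$, not an iff. Second, the \v{C}ech-to-Koszul justification you gesture at is not routine: $\cech\x$ is a homotopy colimit of the Koszul complexes $K^R(\x^t)$, so recovering $\fd_R(X)<\infty$ from $\fd_R(\Lotimes KX)<\infty$ along $X\simeq\Lotimes{\cech\x}X$ would require uniform flat-dimension bounds for all $\Lotimes{K^R(\x^t)}X$ plus a hocolim estimate, and even then it yields at best finiteness rather than the equality of dimensions. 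So the skeleton is right, but the lemma carrying the actual content still has to be proved, and in its sharper, quantitative form.
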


\section{Background}\label{sec140109b}

\subsection*{Derived Categories}
The $i$th shift (or suspension) of an $R$-complex $X$ is denoted $\shift^iX$.
We  consider the following full triangulated subcategories of $\catd(R)$.

\

$\catd_+(R)$: objects are the complexes $X$ with $\HH_i(X)=0$ for $i\ll 0$.

$\catd_-(R)$: objects are the complexes $X$ with $\HH_i(X)=0$ for $i\gg 0$. 

$\catdb(R)$: objects are the complexes $X$ with $\HH_i(X)=0$ for $|i|\gg 0$.

$\catdf(R)$: objects are the complexes $X$ with $\HH_i(X)$ finitely generated for all $i$.

\

\noindent Doubly ornamented subcategories are intersections, e.g., $\catdfb(R):=\catdf(R)\bigcap\catdb(R)$.

\subsection*{Resolutions}
An $R$-complex $P$ 
is \emph{semi-projective}\footnote{In the literature, semi-projective complexes are sometimes called ``K-projective'' or ``DG-projective''.} 
if it respects surjective quasiisomorphisms,
that is, if it consists of projective $R$-modules and respects quasiisomorphisms; see~\cite[1.2.P]{avramov:hdouc}.
A \emph{semi-projective resolution} of an $R$-complex $X$ is a quasiisomorphism $P\xra\simeq X$ such that $P$ is semi-projective.
The \emph{projective dimension} of $X$ is finite, written $\pd_R(X)<\infty$, if it has a bounded semi-projective resolution.
The corresponding flat and injective versions of these notions (homotopically flat, etc.) are defined similarly. 

For the following items, consult~\cite[Section 1]{avramov:hdouc} or~\cite[Chapters 3 and 5]{avramov:dgha}.
Bounded below  complexes of projective modules are semi-projective, 
bounded below  complexes of flat modules are semi-flat, and
bounded above  complexes of injective modules are semi-injective, 
Semi-projective $R$-complexes are semi-flat.
Every $R$-complex admits a semi-projective resolution (hence, a semi-flat one) and a semi-injective resolution.

\subsection*{Derived Functors}
The right derived functor of Hom is $\Rhom --$, which is computed via a semi-projective resolution in the first slot
or a semi-injective resolution in the second slot. 
The left derived functor of tensor product is $\Lotimes --$, which is computed via semi-flat resolutions in either slot.

Let $\Lambda^{\fa}$ denote the $\fa$-adic completion functor, and
$\Gamma_{\fa}$ is the $\fa$-torsion functor, i.e.,
for an $R$-module $M$ we have
$$\Lambda^{\fa}(M)=\Comp Ma
\qquad
\qquad
\qquad
\Gamma_{\fa}(M)=\{ x \in M \mid \fa^{n}x=0 \text{ for } n \gg 0\}.$$ 
A module $M$ is \textit{$\fa$-torsion} if $\Gamma_{\fa}(M)=M$.

The associated left and right derived functors (i.e., \emph{derived local homology and cohomology} functors)
are  $\LL a-$ and $\RG a-$.
Specifically, given an $R$-complex $X\in\catd(R)$ and a semi-flat resolution $F\xra\simeq X$ and a 
semi-injective resolution $X\xra\simeq I$, then we have $\LL aX\simeq\Lambda^{\fa}(F)$ and $\RG aX\simeq\Gamma_{\fa}(I)$.
Note that these definitions yield natural transformations $\RGno a\xra{\fromRGno a}\id\xra{\toLLno a} \LLno a$, induced by the natural morphisms
$\Gamma_{\fa}(I)\xra{\iota_{\fa}^I} I$ and $F\xra{\nu^{\fa}_F} \Lambda^{\fa}(F)$.
These notions go back to Grothendieck~\cite{hartshorne:lc}, and Matlis~\cite{matlis:kcd,matlis:hps}, respectively;
see also~\cite{lipman:lhcs,lipman:llcd}.

\begin{fact}\label{fact130619b}
By~\cite[Theorem~(0.3) and Corollary~(3.2.5.i)]{lipman:lhcs}, there are natural isomorphisms
of functors
\begin{align*}
\RG a-\simeq\Lotimes{\RG aR}{-}&&
\LL a-\simeq\Rhom{\RG aR}{-}.
\end{align*}
\end{fact}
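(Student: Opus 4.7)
The strategy is to produce a concrete semi-flat representative for $\RG a R$ via the Čech complex on the generating sequence $\x$, and then to deduce both isomorphisms from derived tensor-hom adjunction through this model. Let $\cech{\x}$ denote the extended Čech complex on $\x$; its terms are finite direct sums of localizations of $R$, so $\cech{\x}$ is a bounded complex of flat $R$-modules and in particular is semi-flat. The first key step is to prove $\RG a R\simeq\cech{\x}$ in $\catd(R)$: take a semi-injective resolution $R\res I$, so that $\RG a R\simeq\Gamma_{\fa}(I)$, and verify that the canonical chain map $\Gamma_{\fa}(I)\to\cech{\x}\otimes_R I$ is a quasiisomorphism. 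This reduces, by decomposing each injective term into indecomposables $E(R/\p)$ and using the multiplicative (K\"unneth) decomposition $\cech{\x}\cong\cech{x_1}\otimes\cdots\otimes\cech{x_n}$, to the observation that $E(R/\p)_x=0$ when $x\in\p$ and $E(R/\p)_x=E(R/\p)$ when $x\notin\p$; consequently, both $\cech{\x}\otimes_R E(R/\p)$ and $\Gamma_{\fa}(E(R/\p))$ have cohomology $E(R/\p)$ concentrated in degree $0$ when $\p\supseteq\fa$ and vanish otherwise.

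With $\RG a R\simeq\cech{\x}$ in hand, the first isomorphism is immediate: for any $X\in\catd(R)$, a semi-injective resolution $X\res I_X$ gives $\RG a X\simeq\Gamma_{\fa}(I_X)\simeq\cech{\x}\otimes_R I_X$ by the same computation, and semi-flatness of $\cech{\x}$ promotes this last strict tensor product to the derived one, yielding $\cech{\x}\otimes_R I_X\simeq\Lotimes{\cech{\x}}{X}\simeq\Lotimes{\RG a R}{X}$.

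For the second isomorphism, I would construct a natural comparison morphism $\LL a Y\to\Rhom{\cech{\x}}{Y}\simeq\Rhom{\RG a R}{Y}$ at the chain level, using a semi-flat resolution $F\res Y$ (so that $\LL a Y\simeq\Lambda^{\fa}(F)$), and verify it is a quasiisomorphism by an analogous induction on the length of $\x$. The one-variable input is that for a flat $R$-module $F$ and a single element $x$, the $x$-adic completion $\lim_n F/x^n F$ is quasiisomorphic to $\Hom{R\to R_x}{F}$, which follows from explicit chain-level formulas combined with a Mittag-Leffler argument controlling the relevant $\lim^1$ terms. The main obstacle is precisely this Greenlees--May-type isomorphism: whereas the Čech model for $\RG a$ reduces to a torsion (essentially left-exact) computation that behaves cleanly on injective resolutions, the dual statement for $\LL a$ requires genuine control over higher inverse limits in the tower $\{F/\fa^nF\}$, which is the substantive content of~\cite{lipman:lhcs} invoked in Fact~\ref{fact130619b}.
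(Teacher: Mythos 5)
The paper does not actually prove this statement: it is quoted as a background Fact from~\cite{lipman:lhcs} (and the related machinery of~\cite{yekutieli:hct}), so there is no internal argument to measure yours against; what you are sketching is, in outline, the proof from the cited literature. The first half of your sketch is sound: $E(R/\p)_x=0$ for $x\in\p$ and $E(R/\p)_x=E(R/\p)$ for $x\notin\p$ do give $\Gamma_{\fa}(I)\simeq\cech{\x}\otimes_RI$ on semi-injective $I$ (for genuinely unbounded $I$ you should add a word about passing from the termwise computation on indecomposable injectives to the total complex, using the finite cohomological amplitude of both functors or a truncation argument), and semi-flatness of $\cech{\x}$ then yields $\RG a{-}\simeq\Lotimes{\RG aR}{-}$.

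The second half has a step that fails as written. Your one-variable input --- that for flat $F$ the completion $\lim_nF/x^nF$ is quasiisomorphic to $\Hom{(R\to R_x)}{F}$ --- is false if that Hom is the honest one, because $\cech{x}$ is semi-flat but not semi-projective, so $\Hom{\cech{x}}{-}$ does not compute $\Rhom{\cech{x}}{-}$. Concretely, for $R=\bbz$, $x=p$, $F=\bbz$ one has $\Hom[\bbz]{\bbz[1/p]}{\bbz}=0$, so the underived Hom complex has cohomology $\bbz$ concentrated in degree $0$, whereas $\lim_n\bbz/p^n\bbz$ is the ring of $p$-adic integers; the missing contribution is $\Ext[\bbz]{1}{\bbz[1/p]}{\bbz}\neq0$, which the underived Hom cannot see. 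The repair is to replace $\cech{x}$ by a semi-projective model, namely the telescope complex $\operatorname{Tel}(x)\res\cech{x}$ (a bounded complex of free modules), so that $\Rhom{\cech{\x}}{Y}\simeq\Hom{\operatorname{Tel}(\x)}{Y}$; it is in $\Hom{\operatorname{Tel}(\x)}{F}$ that the inverse limits and the $\lim^1$/Mittag--Leffler analysis you allude to genuinely appear. A second wrinkle: your induction on the length of $\x$ does not iterate directly, since after one step you are no longer mapping into a flat module (and underived adic completions do not obviously compose); the standard arguments (Greenlees--May, \cite{lipman:lhcs}, \cite{yekutieli:hct}) treat all variables at once via $\operatorname{Tel}(\x)=\operatorname{Tel}(x_1)\otimes_R\cdots\otimes_R\operatorname{Tel}(x_n)$ and the surjective, hence Mittag--Leffler, tower of Koszul complexes $\{K^R(x_1^t,\ldots,x_n^t)\}_t$, identifying $\Hom{\operatorname{Tel}(\x)}{F}$ with $\Lambda^{\fa}(F)$ for semi-flat $F$. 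With those corrections your outline becomes the standard proof of the cited result.
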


\subsection*{Koszul Complexes}
We refer to the following as the ``self-dual nature'' of $K$.

\begin{disc}\label{disc151211a}
Recall the isomorphism
$$K\cong\shift^n\Hom KR.$$
Given an $R$-complex $X$, there are natural isomorphisms
$$\Otimes KX\cong\shift^n\Otimes{\Hom KR}X\cong\shift^n\Hom KX$$
in the category of $R$-complexes;
these are verified by induction on $n$, using the definitions $K(x_i)\cong\cone(R\xra{x_i}R)$
and $K:= K^R(x_1)\otimes_R\cdots\otimes_R K(x_n)$, in terms of mapping cones and tensor products.
From these, we have isomorphisms in $\catd(R)$
\begin{gather*}
K\simeq\shift^n\Rhom KR\\
\Lotimes KX\simeq\shift^n\Lotimes{\Rhom KR}X\simeq\shift^n\Rhom KX.
\end{gather*}
Similarly, one verifies the next natural evaluation isomorphisms for all $X,Y\in\catd(R)$:
\begin{gather*}
\Rhom{\Rhom KX}Y
\simeq\Lotimes K{\Rhom XY}
\simeq\Rhom X{\Lotimes KY}.
\end{gather*}
\end{disc}

\subsection*{Support and Co-support}
The following notions are due to Foxby~\cite{foxby:bcfm} and Benson, Iyengar, and Krause~\cite{benson:csc}.

\begin{defn}\label{defn130503a}
Let $X\in\catd(R)$.
The \emph{small and large support} and  \emph{small co-support} of $X$ are
\begin{align*}
\operatorname{supp}_R(X)
&=\{\mathfrak{p} \in \operatorname{Spec}(R)\mid \Lotimes{\kappa(\p)}X\not\simeq 0 \} \\
\operatorname{Supp}_R(X)
&=\{\mathfrak{p} \in \operatorname{Spec}(R)\mid \Lotimes{R_{\p}}X\not\simeq 0 \} \\
\cosupp_{R}(X)
&=\{\mathfrak{p} \in \operatorname{Spec}(R)\mid \Rhom{\kappa(\p)}X\not\simeq 0 \} 
\end{align*}
where $\kappa(\p):=R_\p/\p R_\p$.
We have a notion of $\operatorname{Co-supp}_R(X)$, as well, but do not need it in the current paper.
\end{defn}

Much of the following is from~\cite{foxby:bcfm} when $X$ and $Y$ are appropriately bounded 
and from~\cite{benson:lcstc,benson:csc} in general. We refer to~\cite{sather:scc} as a matter of convenience.

\begin{fact}\label{cor130528a}
Let $X,Y\in\catd(R)$. Then we have $\supp_R(X)=\emptyset$ if and only if $X\simeq 0$ if and only if $\cosupp_R(X)=\emptyset$,
because of~\cite[Fact~3.4 and Proposition~4.7(a)]{sather:scc}.
Also, by~\cite[Propositions~3.12 and~4.10]{sather:scc} we have 
\begin{align*}
\supp_{R}(\Lotimes{X}{Y}) 
&= \supp_R(X)\bigcap\supp_R(Y)\\
\cosupp_{R}(\Rhom{X}{Y}) 
&= \supp_R(X)\bigcap\cosupp_R(Y).
\end{align*}
In addition, we know that $\supp_R(X)\subseteq\VE(\fa)$ if and only if 
the natural morphism $\fromRG aX\colon\RG aX\to X$ is an isomorphism,
that is, if and only if each homology module $\HH_i(X)$ is $\fa$-torsion,
by~\cite[Proposition~5.4]{sather:scc}
and~\cite[Corollary~4.32]{yekutieli:hct}.
Similarly, we have
$\cosupp_R(X)\subseteq\VE(\fa)$ if and only if 
the natural morphism $\toLL aX\colon X\to \LL aX$ is an isomorphism,
by~\cite[Propositions 5.9]{sather:scc}.
\end{fact}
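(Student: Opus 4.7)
The plan is to handle each clause of the Fact in sequence, since each comes from a different cited source, but all of them reduce questions about $\catd(R)$ to residue-field computations plus a Künneth-style vanishing principle over a field.

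First, for the three-way equivalence $\supp_R(X)=\emptyset \iff X\simeq 0 \iff \cosupp_R(X)=\emptyset$, the implications starting from $X\simeq 0$ are immediate from functoriality of $\Lotimes{-}{-}$ and $\Rhom{-}{-}$. For the converse with small support, I would pick an $i$ with $\HH_i(X)\neq 0$ and a minimal $\p\in\Supp_R\HH_i(X)$; by minimality, $(\HH_i(X))_\p$ is a nonzero $\kappa(\p)$-vector space, and a careful hard-truncation argument detects this inside $\HH_i(\Lotimes{\kappa(\p)}{X})$, placing $\p\in\supp_R X$. The cosupport case is dual, using $\Rhom{\kappa(\p)}{-}$ and the injective perspective. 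Both arguments require Spaltenstein-style semi-flat or semi-injective resolutions to accommodate unbounded $X$; this is the technical work in~\cite{sather:scc}.

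Next, for the multiplicative identities, I would use base change. For support, the associativity isomorphism $\Lotimes{\kappa(\p)}{(\Lotimes{X}{Y})}\simeq(\Lotimes{\kappa(\p)}{X})\otimes^{\mathbf{L}}_{\kappa(\p)}(\Lotimes{\kappa(\p)}{Y})$ combined with the fact that the derived tensor product of two nonzero $\kappa(\p)$-complexes is nonzero (Künneth) gives $\supp_R(\Lotimes{X}{Y})=\supp_R X\cap\supp_R Y$. For cosupport, the evaluation isomorphism $\Rhom{\kappa(\p)}{\Rhom{X}{Y}}\simeq\Rhom{\Lotimes{\kappa(\p)}{X}}{Y}$ reduces the question to when $\Rhom{A}{B}$ of $\kappa(\p)$-complexes vanishes, which happens iff $A\simeq 0$ or $B\simeq 0$, yielding $\cosupp_R(\Rhom{X}{Y})=\supp_R X\cap\cosupp_R Y$.

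Finally, for the torsion/completion characterizations, I would exploit Fact~\ref{fact130619b}: $\RG aX\simeq\Lotimes{\RG aR}{X}$, so the tensor formula combined with $\supp_R(\RG aR)=\VE(\fa)$ gives $\supp_R(\RG aX)\subseteq\VE(\fa)$. Thus if $\fromRG aX$ is an isomorphism, then $\supp_R X\subseteq\VE(\fa)$. Conversely, suppose $\supp_R X\subseteq\VE(\fa)$ and let $C$ be the cone of $\fromRG aX$; then $\supp_R C\subseteq\VE(\fa)$ by what we just proved, but also $\supp_R C\subseteq\spec(R)\setminus\VE(\fa)$ because $\RG a$ is idempotent (so $\fromRG aX$ becomes an iso after $\Lotimes{\kappa(\p)}{-}$ for $\p\not\supseteq\fa$); hence $\supp_R C=\emptyset$ and $C\simeq 0$ by the first equivalence. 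The refinement ``each $\HH_i(X)$ is $\fa$-torsion'' reduces, via soft truncation, to the module statement (classical in bounded cases, extended in~\cite[Corollary~4.32]{yekutieli:hct} to the unbounded setting). The cosupport/completion equivalence is handled dually, using $\LL aX\simeq\Rhom{\RG aR}{X}$ and the $\Rhom$ identity in place of the tensor identity. The main obstacle in all parts is the unboundedness of complexes: the naive truncation/Nakayama arguments that work for bounded complexes must be upgraded with Spaltenstein resolutions and a careful treatment of the idempotency of $\RGno a$ and $\LLno a$, which is precisely the content one extracts from~\cite{sather:scc} and~\cite{yekutieli:hct}.
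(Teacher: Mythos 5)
The paper gives no proof of this statement: it is a background Fact imported wholesale from \cite{sather:scc} and \cite{yekutieli:hct}, so there is no internal argument to compare yours against. Your sketch follows the same standard route as those sources (residue-field base change plus K\"unneth over $\kappa(\p)$ for the tensor formula, Hom-tensor adjointness for the cosupport formula, \v Cech/idempotency arguments for the $\RGno a$ and $\LLno a$ characterizations), and the genuinely hard content --- that $\supp_R$ and $\cosupp_R$ detect vanishing of arbitrary unbounded complexes, and the unbounded ``torsion homology iff $\fromRG aX$ is an isomorphism'' statement --- is deferred by you to exactly the references the paper cites. That is a sensible division of labor for a Fact, but be aware that what you have is a reduction to the literature, not a self-contained proof.

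Two concrete slips should be repaired. First, in the vanishing argument, minimality of $\p$ in $\Supp_R(\HH_i(X))$ does \emph{not} make $(\HH_i(X))_\p$ a $\kappa(\p)$-vector space; it only makes it a nonzero $\p R_\p$-power-torsion $R_\p$-module, and detecting an interior homology degree of an unbounded complex after applying $\Lotimes{\kappa(\p)}{-}$ is essentially the cited theorem itself (the elementary case is $i=\sup(X)<\infty$, via right-exactness and Nakayama; the general case needs the equality $\supp_R(X)=\bigcup_i\supp_R(\HH_i(X))$ or a homotopy-(co)limit argument, which is the real content of \cite{sather:scc}). Second, in the converse direction of the $\RGno a$ characterization you check the wrong primes: showing that $\fromRG aX$ becomes an isomorphism after $\Lotimes{\kappa(\p)}{-}$ for $\p\not\supseteq\fa$ only re-establishes $\supp_R(C)\subseteq\VE(\fa)$, which you already had from the cone; what you need for $\supp_R(C)\bigcap\VE(\fa)=\emptyset$ is the isomorphism for $\p\in\VE(\fa)$, which follows from the computation $\Lotimes{\kappa(\p)}{\RG aR}\simeq\kappa(\p)$ (the generators of $\fa$ map to zero in $\kappa(\p)$), or by applying $\RGno a$ to the triangle and combining idempotency with the tensor-support formula. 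A smaller point of the same kind: in the cosupport formula one further adjunction, $\Rhom{\Lotimes{\kappa(\p)}{X}}{Y}\simeq\Rhom[\kappa(\p)]{\Lotimes{\kappa(\p)}{X}}{\Rhom{\kappa(\p)}{Y}}$, is needed before you can invoke vanishing of $\Rhom[\kappa(\p)]{-}{-}$ over the field. With these repairs your outline is correct.
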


\subsection*{Adic Finiteness}
The next fact and definition from~\cite{sather:scc} take their cues from work of 
Hartshorne~\cite{hartshorne:adc},
Kawasaki~\cite{kawasaki:ccma,kawasaki:ccc}, and
Melkersson~\cite{melkersson:mci}.

\begin{fact}[\protect{\cite[Theorem 1.3]{sather:scc}}]
\label{thm130612a}
For $X\in\catd_{\text b}(R)$, the next conditions are equivalent.
\begin{enumerate}[\rm(i)]
\item\label{cor130612a1}
One has $\Lotimes{K^R(\underline{y})}{X}\in\catdfb(R)$  for some (equivalently, for every) generating sequence $\underline{y}$ of $\fa$.
\item\label{cor130612a2}
One has  $\Lotimes{X}{R/\mathfrak{a}}\in\catd^{\text{f}}(R)$.
\item\label{cor130612a3}
One has  $\Rhom{R/\mathfrak{a}}{X}\in\catd^{\text{f}}(R)$.
\end{enumerate}
\end{fact}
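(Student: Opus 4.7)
My plan is to establish the three-way equivalence by proving (ii) $\Leftrightarrow$ (i) $\Leftrightarrow$ (iii); the independence of the generating sequence in (i) then falls out automatically, since condition (ii) makes no reference to $\x$. The primary engine is a pair of convergent spectral sequences built from the double complex $\Otimes{K}{F}$ for a semi-flat resolution $F\xra\simeq X$, together with the self-duality $K\simeq\shift^n\Rhom KR$ from Remark~\ref{disc151211a}. The structural facts about $K$ that are exploited throughout are that $K$ is a bounded complex of finitely generated free $R$-modules, $\HH_0(K)=R/\fa$, and each $\HH_q(K)$ is finitely generated over the noetherian ring $R/\fa$ (being a subquotient of the finitely generated free module $K_q$ that is annihilated by $\fa$).

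For (ii) $\Rightarrow$ (i), I would exploit the spectral sequence $E^2_{p,q}=\Tor{p}{\HH_q(K)}{X}\Rightarrow \HH_{p+q}(\Lotimes KX)$ in tandem with a change-of-rings spectral sequence $\Tor[R/\fa]{p}{\HH_q(K)}{\Tor{r}{R/\fa}{X}}\Rightarrow\Tor{p+r}{\HH_q(K)}{X}$. Assuming (ii), each $\Tor{r}{R/\fa}{X}=\HH_r(\Lotimes X{R/\fa})$ is finitely generated over $R/\fa$, and $\HH_q(K)$ is too, so finite generation propagates through the change-of-rings abutment to give finite generation of each $\Tor{p+r}{\HH_q(K)}{X}$; convergence of the first spectral sequence (which is bounded in $q$ by $[0,n]$ and in total degree by the boundedness of $X$) then yields (i).

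For the reverse implication (i) $\Rightarrow$ (ii), I would first reduce to the module case via induction on the amplitude of $X$, using that the soft-truncation triangles of $X$ and the exactness of $\Lotimes K-$ give a finite tower whose cones are shifts of $\HH_i(X)$; since each of (i) and (ii) is preserved by triangles with two of three vertices in the relevant subcategory, the reduction is clean. In the module case, the implication $\Otimes KM\in\catdfb(R)\Rightarrow\Lotimes M{R/\fa}\in\catdf(R)$ is the content of Hartshorne--Kawasaki--Melkersson type results: the hypothesis yields finitely generated $M/\fa M=\HH_0(\Otimes KM)$ and $(0:_M\fa)\cong\HH_n(\Otimes KM)$, and Melkersson's criterion together with induction up the Koszul filtration (using the noetherianness of $R/\fa$ and finite generation of each $\HH_q(K)$ over $R/\fa$) promotes this to finite generation of all $\Tor{p}{R/\fa}{M}$. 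Finally, (ii) $\Leftrightarrow$ (iii) is obtained by running the whole argument with $\ext$ in place of $\tor$, made possible by the self-duality $\Rhom KX\simeq\shift^{-n}\Lotimes KX$, which translates the Hom-side condition into the same Koszul-tensor condition that characterizes (i).

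The main obstacle is the module-case implication in (i) $\Rightarrow$ (ii): extracting finite generation of $\Tor{p}{R/\fa}{M}$ for $p>0$ from finite generation of Koszul homology is not a formal consequence of the spectral sequence collapse, because $E^\infty$-page subquotients of a finitely generated abutment need not recover the $E^2$-page. It requires the ``reverse'' use of the change-of-rings machinery, together with Melkersson's criterion that finite generation of an $\fa$-torsion $R$-module is detected by $\Hom{R/\fa}{-}$, in order to convert the bounded Koszul data into genuine finite generation at the level of individual Tor modules.
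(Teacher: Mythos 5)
Your treatment of (ii)$\Rightarrow$(i) is sound: the change-of-rings spectral sequence over the noetherian ring $R/\fa$, fed into the Koszul hyperhomology spectral sequence, does propagate finite generation to each $\HH_i(\Lotimes{\HH_q(K)}{X})$ and then to $\HH_i(\Lotimes KX)$, boundedness of $\Lotimes KX$ being automatic; and, as you say, independence of the generating sequence comes for free. The genuine gap is in (i)$\Rightarrow$(ii), and it is not the step you flag as the main obstacle but the preceding reduction to the module case. The two-out-of-three property of conditions (i) and (ii) lets you \emph{assemble} a conclusion for $X$ from conclusions about the pieces of its truncation tower; it does not let you transfer the hypothesis (i) from $X$ down to the pieces $\shift^j\HH_j(X)$, which is exactly what your amplitude induction needs (at the start you know (i) only for $X$ itself). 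Worse, this transfer is genuinely false: the complex $X=\RG aR$ lies in $\catdb(R)$ and satisfies (i), since $\Lotimes K{\RG aR}\simeq\RG aK\simeq K\in\catdfb(R)$ (cf.\ Example~\ref{ex160206a}), yet in Hartshorne's example \cite{hartshorne:adc} (e.g.\ $R=k[[x,y,u,v]]/(xu-yv)$ and $\fa=(x,y)R$) the homology module $\HH^2_\fa(R)$ has $\Hom{R/\fa}{\HH^2_\fa(R)}\cong(0:_{\HH^2_\fa(R)}\fa)$ not finitely generated, so that homology module fails (iii), hence fails (i) and (ii). Thus no argument that passes through the individual homology modules of $X$ can prove (i)$\Rightarrow$(ii). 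A secondary flaw: the ``Melkersson criterion'' you invoke---that finite generation of an $\fa$-torsion module is detected by $\Hom{R/\fa}{-}$---is false as stated: over a local ring $(R,\m,k)$ the injective hull $E_R(k)$ is $\m$-torsion with $\Hom{R/\m}{E_R(k)}\cong k$ finitely generated, but $E_R(k)$ is not.

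The repair stays inside your own toolkit: prove (i)$\Rightarrow$(ii) for the complex $X$ directly, by induction on homological degree rather than on amplitude. Assuming $\HH_j(\Lotimes{(R/\fa)}{X})$ is finitely generated for all $j<p$ (the induction starts vacuously below $\inf(X)$, which is finite since $X\in\catdb(R)$), your change-of-rings spectral sequence yields finite generation of $\HH_j(\Lotimes{\HH_q(K)}{X})$ for all $j<p$ and all $q$. Now inspect the corner term $E^2_{p,0}=\HH_p(\Lotimes{(R/\fa)}{X})$ of the Koszul spectral sequence $E^2_{j,q}=\HH_j(\Lotimes{\HH_q(K)}{X})\Rightarrow\HH_{j+q}(\Lotimes KX)$: since $q$ is confined to $[0,n]$ there are no incoming differentials, each outgoing differential lands in a subquotient of a term with first index $<p$ (finitely generated by induction), and $E^\infty_{p,0}$ is a subquotient of the finitely generated module $\HH_p(\Lotimes KX)$; hence $E^2_{p,0}$ is a finite extension of finitely generated modules and the induction closes. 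The Ext analogue of this bootstrap, combined with the self-duality $\Rhom KX\simeq\shift^{-n}\Lotimes KX$ exactly as you propose, settles (iii). (Alternatively, one can use a degreewise finite semifree resolution of $\Lotimes KX$ over the Koszul DG algebra $K$.) For what it is worth, the paper offers no proof of this statement at all: it is quoted from \cite[Theorem 1.3]{sather:scc}.
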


\begin{defn}\label{def120925d}
An $R$-complex $X\in\catdb(R)$ is \emph{$\mathfrak{a}$-adically finite} if it satisfies the equivalent conditions of Fact~\ref{thm130612a} and $\operatorname{supp}_R(X) \subseteq \operatorname{V}(\mathfrak{a})$.
\end{defn}

\begin{ex}\label{ex160206a}
Let $X\in\catdb(R)$ be given.
\begin{enumerate}[(a)]
\item \label{ex160206a1}
If $X\in\catdfb(R)$, then we have $\supp_R(X)=\VE(\fb)$ for some ideal $\fb$, and it follows that $X$ is $\fa$-adically finite
whenever $\fa\subseteq\fb$. (The case $\fa=0$ is from~\cite[Proposition~7.8(a)]{sather:scc}, and the general case follows readily.)
\item \label{ex160206a2}
$K$ and $\RG aR$ are $\fa$-adically finite, by~\cite[Fact~3.4 and Theorem~7.10]{sather:scc}.
\item \label{ex160206a3}
If $(R,\m)$ is local, then the homology modules of $X$ are artinian if and only if $X$ is $\m$-adically finite, by~\cite[Proposition~7.8(b)]{sather:scc}.
See Proposition~\ref{prop160208a} for an extension of this.
\end{enumerate}
\end{ex}

\subsection*{Bookkeeping}
We use some convenient accounting tools due to Foxby~\cite{foxby:ibcahtm}.

\begin{defn}\label{defn151112a}
The
\emph{supremum}, \emph{infimum},  \emph{amplitude}, \emph{$\fa$-depth}, and \emph{$\fa$-width} of an $R$-complex $Z$ are
\begin{align*}
\sup(Z)&=\sup\{ i\in\bbz\mid\HH_i(Z)\neq 0\}\\
\inf(Z)&=\inf\{ i\in\bbz\mid\HH_i(Z)\neq 0\}\\
\amp(Z)&=\sup(Z)-\inf(Z)\\
\depth_{\fa}(Z)&=-\sup(\Rhom{R/\fa}Z)\\
\width_{\fa}(Z)&=\inf(\Lotimes{(R/\fa)}Z)
\end{align*}
with the conventions $\sup\emptyset=-\infty$ and $\inf\emptyset=\infty$.
\end{defn}

\begin{fact}\label{disc151112a}
Let $Y,Z\in\catd(R)$.
\begin{enumerate}[(a)]
\item\label{disc151112a1}
By definition, one has $\sup(Z)<\infty$ if and only if $Z\in\catd_-(R)$.
Also, one has $\inf(Z)>-\infty$ if and only if $Z\in\catd_+(R)$, and 
one has $\amp(Z)<\infty$ if and only if $Z\in\catdb(R)$.
\item\label{disc151112a5}
By~\cite[Lemma~2.1]{foxby:ibcahtm}, there are inequalities
\begin{gather*}
\inf(Y)+\inf(Z)\leq\inf(\Lotimes YZ) \\
\sup(\Rhom YZ)\leq\sup(Z)-\inf(Y).
\end{gather*}
\end{enumerate}
\end{fact}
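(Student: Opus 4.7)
For part (a), I would simply unwind the definitions. The statement $\sup(Z)<\infty$ means, by definition, that there exists an integer $n$ with $\HH_i(Z)=0$ for all $i>n$, which is exactly the condition defining $Z\in\catd_-(R)$; the convention $\sup\emptyset=-\infty$ handles the case $Z\simeq 0$ since $-\infty<\infty$. The $\inf$ characterization is symmetric. For the amplitude, since $\amp(Z)=\sup(Z)-\inf(Z)$, finiteness of $\amp(Z)$ is equivalent to simultaneous finiteness of $\sup(Z)$ and $\inf(Z)$, hence to $Z\in\catd_-(R)\cap\catd_+(R)=\catdb(R)$.

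For part (b), which the statement already attributes to~\cite[Lemma~2.1]{foxby:ibcahtm}, the plan for a self-contained verification is to work with judiciously truncated resolutions. First I would reduce to the nontrivial case for the first inequality: if $\inf(Y)=-\infty$ or $\inf(Z)=-\infty$, the bound is vacuous, and if either complex is acyclic then both sides equal $\infty$ by convention. So assume $Y,Z\in\catd_+(R)$ are not acyclic. I would choose semi-projective (hence semi-flat) resolutions $P\xra\simeq Y$ and $Q\xra\simeq Z$ with $P_i=0$ for $i<\inf(Y)$ and $Q_j=0$ for $j<\inf(Z)$. Then $\Lotimes YZ\simeq P\otimes_R Q$, and its degree-$n$ component $\bigoplus_{i+j=n}P_i\otimes_RQ_j$ vanishes for $n<\inf(Y)+\inf(Z)$, yielding the asserted lower bound on the infimum of the homology.

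The second inequality is dual in spirit. After reducing similarly to $Y\in\catd_+(R)$ and $Z\in\catd_-(R)$, I would choose a semi-projective resolution $P\xra\simeq Y$ with $P_i=0$ for $i<\inf(Y)$ together with a semi-injective resolution $Z\xra\simeq I$ with $I_j=0$ for $j>\sup(Z)$. Then $\Rhom YZ\simeq\Hom{P}{I}$, whose degree-$n$ term is the product $\prod_i\Hom{P_i}{I_{i+n}}$. For any index $i$ with $P_i\neq 0$ one has $i\geq\inf(Y)$, so $i+n\geq\inf(Y)+n$; as soon as $n>\sup(Z)-\inf(Y)$ this forces $i+n>\sup(Z)$, hence $I_{i+n}=0$, so every factor vanishes and the whole term is zero. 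This gives the claimed upper bound on the supremum.

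The only real technical point, and the step I expect to require the most care in a self-contained treatment, is the existence of semi-projective and semi-injective resolutions with the prescribed degree bounds. This is not purely formal: it comes from the standard inductive construction of resolutions starting from the bottom (respectively, top), combined with the fact recalled earlier in the excerpt that bounded-below complexes of projectives (respectively, bounded-above complexes of injectives) are automatically semi-projective (respectively, semi-injective). Once these resolutions are in hand, the degree inequalities follow immediately from the componentwise formulas for tensor product and Hom of complexes.
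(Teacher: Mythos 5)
Your proof is correct and is essentially the standard argument behind the statement: the paper records this as a Fact without proof, deferring part (b) to Foxby's Lemma~2.1, whose proof is exactly the truncated-resolution, componentwise-degree computation you outline (and the existence of semi-projective resolutions vanishing below $\inf(Y)$ and semi-injective resolutions vanishing above $\sup(Z)$ is indeed the only real input, available from the bounded-complex criteria recalled in the background section). The only microscopic caveat is in part (a): when $Z\simeq 0$ one has $\amp(Z)=-\infty<\infty$ while $\sup(Z)$ and $\inf(Z)$ are not finite, so finiteness of $\amp(Z)$ is not literally equivalent to simultaneous finiteness of $\sup(Z)$ and $\inf(Z)$; but since $0\in\catdb(R)$ the asserted equivalence with $Z\in\catdb(R)$ still holds.
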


\section{Koszul Homology}\label{sec151206b}

We begin this section by showing how, with appropriate support conditions, bounded Koszul homology implies bounded homology.
Note that the self-dual nature~\ref{disc151211a} of the Koszul complex 
implies that these also give results for Koszul cohomology; see, e.g., the proof of Lemma~\ref{lem150604a2}.

\begin{lem}\label{lem150604a1}
Let  $Z\in\catd(R)$, let $\y=y_1,\ldots,y_m\in\fa$, and set
$L:=K^R(\y)$ and $\fb=(\y)R$.
Assume that $\supp_R(Z)\subseteq\VE(\fa)$, e.g., that each homology module $\HH_i(Z)$ is annihilated by a power of $\fa$.
\begin{enumerate}[\rm(a)]
\item\label{lem150604a1z}
There are (in)equalities
\begin{align*}
\inf(\Lotimes LZ)&\leq m+\inf(Z)&
\sup(\Lotimes LZ)&=m+\sup(Z)\\
\amp(Z)&\leq\amp(\Lotimes LZ)&
\depth_{\fb}(Z)&=-\sup(Z).
\end{align*}
\item\label{lem150604a1a}
For each $*\in\{+,-,\text{b}\}$, one has $\Lotimes LZ\in\catd_*(R)$ if and only if $Z\in\catd_*(R)$.
\end{enumerate}
\end{lem}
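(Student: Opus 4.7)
My plan is to induct on $m$, reducing part~(a) to the single-element case via $L \cong K^R(y_1) \otimes_R \cdots \otimes_R K^R(y_m)$. For a single $y \in \fa$ and an $R$-complex $W$ with $\supp_R(W) \subseteq \VE(\fa)$, I tensor the distinguished triangle $R \xra{y} R \to K^R(y) \to \shift R$ with $W$ to get a long exact homology sequence that breaks into short exact sequences
\[
0 \to \HH_{i+1}(W)/y\HH_{i+1}(W) \to \HH_{i+1}(\Lotimes{K^R(y)}{W}) \to (0 :_{\HH_i(W)} y) \to 0.
\]

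The crux is the torsion observation: by Fact~\ref{cor130528a}, each $\HH_i(W)$ is $\fa$-torsion, and any nonzero $\fa$-torsion module has nonzero $y$-annihilator (pick $0 \neq x$, let $n \geq 1$ be minimal with $y^n x = 0$; then $y^{n-1}x \neq 0$ but $y \cdot y^{n-1}x = 0$). Applied at $i = \sup(W)$, the left-hand term of the displayed sequence vanishes while the right is nonzero, giving $\sup(\Lotimes{K^R(y)}{W}) = \sup(W) + 1$; applied at $i = \inf(W)$, the right-hand term is again nonzero, giving $\inf(\Lotimes{K^R(y)}{W}) \leq \inf(W) + 1$. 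Iterating over $y_1,\ldots,y_m$---with the support hypothesis preserved at each step by Fact~\ref{cor130528a}---yields the first two (in)equalities in part~(a). The amplitude bound is arithmetic, and $\depth_\fb(Z) = -\sup(Z)$ follows from the standard Koszul identity $\depth_\fb(Z) = m - \sup(\Lotimes LZ)$ together with the established sup equality.

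For part~(b), the equality $\sup(\Lotimes LZ) = m + \sup(Z)$ (still valid when $\sup(Z) = +\infty$, since the key argument is really the pointwise implication $\HH_i(W) \neq 0 \Rightarrow \HH_{i+1}(\Lotimes{K^R(y)}{W}) \neq 0$) handles the $\catd_-$ equivalence; the sandwich $\inf(Z) \leq \inf(\Lotimes LZ) \leq m + \inf(Z)$, combining Fact~\ref{disc151112a} with part~(a), handles the $\catd_+$ equivalence; and $\catdb$ is their intersection. The main obstacle is really the asymmetry in part~(a) between the sup (equality) and inf (inequality) bounds---equality at the top comes from $y$ having nonzero kernel on every nonzero $\fa$-torsion module, whereas the cokernel $\HH_j(W)/y\HH_j(W)$ can genuinely vanish (e.g., $y$ acts surjectively on the $\m$-adic injective hull of the residue field of a local ring $(R,\m)$), which is why only an inequality appears at the bottom.
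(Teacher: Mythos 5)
Your proof is correct, and its engine is the same one the paper uses: multiplication by an element of $\fa$ cannot act injectively on a nonzero $\fa$-torsion homology module (Fact~\ref{cor130528a}), fed into the long exact sequence obtained by tensoring $R\xra{y}R\to K^R(y)\to$ with the complex; this is exactly the paper's opening Claim, and your treatment of the inequality $\inf(\Lotimes LZ)\leq m+\inf(Z)$, of the amplitude bound, and of part~(\ref{lem150604a1a}) via the sandwich $\inf(Z)\leq\inf(\Lotimes LZ)\leq m+\inf(Z)$ coming from Fact~\ref{disc151112a} matches the paper's. Where you genuinely diverge is the equality $\sup(\Lotimes LZ)=m+\sup(Z)$: the paper deduces it by observing that $\supp_R(Z)\subseteq\VE(\fb)$ forces $\RG bZ\simeq Z$ and then quoting Theorem~2.1 of \cite{foxby:daafuc}, whereas you extract it directly from your short exact sequences, using that the kernel term $(0:_{\HH_i(Z)}y)$ is nonzero whenever $\HH_i(Z)\neq 0$ while the cokernel term vanishes above the supremum. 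Your route is more elementary and yields the sharper pointwise statement $\HH_i(Z)\neq 0\Rightarrow\HH_{i+m}(\Lotimes LZ)\neq 0$, which makes the unbounded cases $\sup(Z)=+\infty$ and $\inf(Z)=-\infty$ completely transparent, at the cost of a bit more bookkeeping; the paper's route is shorter but leans on derived local cohomology. Note, however, that your ``standard Koszul identity'' $\depth_{\fb}(Z)=m-\sup(\Lotimes LZ)$ for unbounded complexes is precisely the second appeal the paper makes to Theorem~2.1 of \cite{foxby:daafuc}, so your argument does not avoid that reference and you should cite it at that step. (Your closing aside about the cokernel genuinely vanishing is fine as motivation, though for the injective hull example one should take $y$ a nonzerodivisor, or simply note divisibility of $E(k)$ in the relevant cases; it plays no role in the proof.)
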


\begin{proof}
Claim: For any $i\in\bbz$, if $\HH_i(\Lotimes{K^R(y_1)}{Z})=0$, then $\HH_{i-1}(Z)=0$.
To prove the claim, assume that $\HH_i(\Lotimes{K^R(y_1)}{Z})=0$.
The standard long exact sequence for Koszul homology contains the following:
$$\HH_i(\Lotimes{K^R(y_1)}{Z})\to\HH_{i-1}(Z)\xra{y_1}\HH_{i-1}(Z).$$
It follows that the map $\HH_{i-1}(Z)\xra{y_1}\HH_{i-1}(Z)$ is injective. 
However, the condition $\supp_R(Z)\subseteq\VE(\fa)$ implies that $\HH_{i-1}(Z)$ is $\fa$-torsion
by Fact~\ref{cor130528a}. Since $y_1$ is in $\fa$, the fact that the map $\HH_{i-1}(Z)\xra{y_1}\HH_{i-1}(Z)$ is injective
therefore implies that $\HH_{i-1}(Z)=0$, as claimed.

\eqref{lem150604a1z}
We now show that $\inf(\Lotimes LZ)\leq m+\inf(Z)$.
For this, it suffices to show that for any $i\in\bbz$, if $\HH_i(\Lotimes{L}{Z})=0$, then $\HH_{i-m}(Z)=0$.
We verify this by induction on $m$, the base case $m=1$ being covered by the above claim.
For the induction step, it suffices to note that Fact~\ref{cor130528a}
implies that $\supp_R(\Lotimes{K^R(y_1)}Z)\subseteq\supp_R(Z)\subseteq\VE(\fa)$.

For the equality $\sup(\Lotimes LZ)=m+\sup(Z)$,
note that the condition $\supp_R(Z)\subseteq\VE(\fa)\subseteq\VE(\fb)$ implies that $\RG bZ\simeq Z$,
by Fact~\ref{cor130528a}.
Thus, we have the following equalities by~\cite[Theorem 2.1]{foxby:daafuc}
$$\sup(\Lotimes LZ)=\sup(\RG bZ)+m=\sup(Z)+m.$$

The inequality for amp follows directly.
For the equality $\depth_{\fb}(Z)=\sup(Z)$, note that~\cite[Theorem 2.1]{foxby:daafuc}
shows that we have
$$\depth_{\fb}(Z)=-\sup(\Lotimes LZ)+m
=-\sup(Z)$$
by what we have already shown.

\eqref{lem150604a1a} This follows from part~\eqref{lem150604a1z}, 
via Fact~\ref{disc151112a}.
\end{proof}

Note that some items in the next result use $L$, while others use $K$. 

\begin{lem}\label{lem150604a2}
Let  $Z\in\catd(R)$, let $\y=y_1,\ldots,y_m\in\fa$, and set
$L:=K^R(\y)$ and $\fb:=(\y)R\subseteq\fa$. 
Assume that $\cosupp_R(Z)\subseteq\VE(\fa)$, e.g., that each homology module $\HH_i(Z)$ is annihilated by a power of $\fa$.
\begin{enumerate}[\rm(a)]
\item\label{lem150604a2z}
There are (in)equalities
\begin{gather*}
\width_\fb(Z)=\inf(Z)=\inf(\Lotimes LZ)\\
\sup(Z)-n\leq\sup(\Lotimes KZ) \\
\amp(Z)-n\leq\amp(\Lotimes KZ).
\end{gather*}
\item\label{lem150604a2b}
For each $*\in\{+,-,\text{b}\}$, one has $\Lotimes KZ\in\catd_*(R)$ if and only if $Z\in\catd_*(R)$.
\end{enumerate}
\end{lem}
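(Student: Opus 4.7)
The plan is to parallel the proof of Lemma~\ref{lem150604a1}, systematically replacing supp/$\fa$-torsion tools with their cosupp/derived-$\fa$-complete counterparts, and making essential use of the self-dual nature of the Koszul complex from Remark~\ref{disc151211a}. Throughout, the hypothesis $\cosupp_R(Z)\subseteq\VE(\fa)$ is equivalent, via Fact~\ref{cor130528a}, to $Z\simeq\LL aZ$, and a~fortiori gives $Z\simeq\LL bZ$ since $\fb\subseteq\fa$.

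For the string $\width_\fb(Z)=\inf(Z)=\inf(\Lotimes LZ)$ in~\eqref{lem150604a2z}, I would break it into three pieces. The inequality $\inf(Z)\leq\inf(\Lotimes LZ)$ is immediate from Fact~\ref{disc151112a}(b) with $\inf(L)=0$. The identity $\inf(\Lotimes LZ)=\width_\fb(Z)$ holds without hypothesis by \cite[Theorem 2.1]{foxby:daafuc}, being the dual of the depth/sup formula used in the proof of Lemma~\ref{lem150604a1}\eqref{lem150604a1z}. The remaining inequality $\width_\fb(Z)\leq\inf(Z)$ is where the cosupport hypothesis enters: from $Z\simeq\LL bZ$ a derived Nakayama argument yields $\width_\fb(\LL bZ)=\inf(\LL bZ)$.

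For the sup inequality $\sup(Z)-n\leq\sup(\Lotimes KZ)$, I would induct on $n$, writing $K\cong K^R(x_n)\otimes_R K_{n-1}$ where $K_{n-1}:=K^R(x_1,\ldots,x_{n-1})$. The distinguished triangle $\Lotimes{K_{n-1}}Z\xra{x_n}\Lotimes{K_{n-1}}Z\to\Lotimes KZ$ yields for each $i$ a short exact sequence
$$0\to\coker\bigl(x_n|_{\HH_i(\Lotimes{K_{n-1}}Z)}\bigr)\to\HH_i(\Lotimes KZ)\to\ker\bigl(x_n|_{\HH_{i-1}(\Lotimes{K_{n-1}}Z)}\bigr)\to 0.$$
Setting $M:=\HH_t(\Lotimes{K_{n-1}}Z)$ with $t:=\sup(\Lotimes{K_{n-1}}Z)$, the cosupport condition propagates inductively via the self-duality $\Lotimes{K_{n-1}}Z\simeq\shift^{n-1}\Rhom{K_{n-1}}Z$ combined with Fact~\ref{cor130528a}, giving $\cosupp_R(M)\subseteq\cosupp_R(\Lotimes{K_{n-1}}Z)\subseteq\VE(\fa)$. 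The crux is that multiplication by $x_n$ cannot be bijective on such a nonzero $M$: were it, any $\fp\in\cosupp_R(M)$ would satisfy $\fp\supseteq\fa\ni x_n$, and $R$-linearity of $\Rhom{\kappa(\fp)}M$ would force $x_n$ to act simultaneously as zero (from the source, where $x_n\in\fp$) and as an isomorphism (from the target), yielding $\Rhom{\kappa(\fp)}M\simeq 0$ and contradicting $\fp\in\cosupp_R(M)$. Hence $\HH_t(\Lotimes KZ)$ or $\HH_{t+1}(\Lotimes KZ)$ is nonzero, so $\sup(\Lotimes KZ)\geq t$ and, by iteration, $\sup(\Lotimes KZ)\geq\sup(Z)$, which is in fact stronger than the stated bound. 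The amplitude estimate is then immediate, and part~\eqref{lem150604a2b} follows by combining this with the reverse inequality $\sup(\Lotimes KZ)\leq\sup(Z)+n$ obtained from $\Lotimes KZ\simeq\shift^n\Rhom KZ$ and Fact~\ref{disc151112a}(b). The main obstacle I anticipate is the derived Nakayama step for $\width_\fb(Z)\leq\inf(Z)$: translating $\LL bZ\simeq Z$ into this numerical equality needs a careful citation or a direct truncation/completion argument, whereas the bijectivity step in the induction is routine once cosupport is propagated along the Koszul tower.
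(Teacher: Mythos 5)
Your treatment of the first string of (in)equalities is essentially sound and close to the paper's: the paper obtains $\inf(Z)=\inf(\LL bZ)=\inf(\Lotimes LZ)=\inf(\Lotimes{(R/\fb)}Z)=\width_\fb(Z)$ directly from $Z\simeq\LL bZ$ (Fact~\ref{cor130528a}) together with \cite[Theorem~4.1]{foxby:daafuc}, and that citation is exactly the ``derived Nakayama'' identity $\width_\fb(Z)=\inf(\LL bZ)$ that you flag as your anticipated obstacle (note it is Theorem~4.1, not 2.1, of Foxby--Iyengar that carries the width statements). So that half is a matter of citing the right result, not a gap.

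The genuine gap is in your sup bound. Your induction hinges on the assertion that the top homology module $M=\HH_t(\Lotimes{K_{n-1}}Z)$ satisfies $\cosupp_R(M)\subseteq\cosupp_R(\Lotimes{K_{n-1}}Z)\subseteq\VE(\fa)$, ``propagated via self-duality and Fact~\ref{cor130528a}.'' Self-duality and Fact~\ref{cor130528a} control only the cosupport of the \emph{complex} $\Lotimes{K_{n-1}}Z$; they say nothing about the cosupport of its individual homology modules. This is precisely where cosupport differs from support: Fact~\ref{cor130528a} states that $\supp_R(W)\subseteq\VE(\fa)$ if and only if every $\HH_i(W)$ is $\fa$-torsion (which is what drives the Claim in the proof of Lemma~\ref{lem150604a1}), but there is no homology-level characterization of $\cosupp_R(W)\subseteq\VE(\fa)$ --- the ``e.g.''\ in the statement of the present lemma is a one-way implication --- and the containment $\cosupp_R(\HH_{\sup W}(W))\subseteq\cosupp_R(W)$ is not a formal consequence of anything available here; a truncation argument only yields the opposite-flavored containment $\cosupp_R(W)\subseteq\bigcup_i\cosupp_R(\HH_i(W))$, and bookkeeping does not rule out the cosupport of the top homology being ``cancelled'' against lower homology. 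Consequently the non-bijectivity of $x_n$ on $M$ is unproven and the induction collapses at its crux. The paper's proof avoids homology modules entirely: from $Z\simeq\LL aZ\simeq\LL a{\RG aZ}\simeq\Rhom{\RG aR}{\RG aZ}$ (MGM equivalence, via Fact~\ref{cor130528a} and \cite{lipman:lhcs}), Fact~\ref{disc151112a}\eqref{disc151112a5} gives $\sup(Z)\leq\sup(\RG aZ)-\inf(\RG aR)$, and then $\sup(\RG aZ)\leq\sup(\Lotimes KZ)$ by \cite[Theorem~2.1]{foxby:daafuc} together with $\inf(\RG aR)\geq-n$ via the \v{C}ech complex. (Your stronger claim $\sup(\Lotimes KZ)\geq\sup(Z)$ does in fact follow from this same computation, since $\sup(\RG aZ)=\sup(\Lotimes KZ)-n$; it is your route to it that has the hole, not the target.) Your deduction of the amplitude bound and of part~(b) from the numerical estimates, including the reverse bound $\sup(\Lotimes KZ)\leq\sup(Z)+n$ from self-duality, is fine once the sup bound is repaired.
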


\begin{proof}
\eqref{lem150604a2z}
The assumption $\cosupp_R(Z)\subseteq\VE(\fa)\subseteq\VE(\fb)$ implies that $Z\simeq\LL bZ$, by Fact~\ref{cor130528a}. 
This explains the first equality in the next sequence
$$\inf(Z)=\inf(\LL bZ)=\inf(\Lotimes LZ)=\inf(\Lotimes{(R/\fb)}Z)=\width_\fb(Z)$$
while the remaining equalities are 
from~\cite[Theorem 4.1]{foxby:daafuc} and by definition.
This also explains the first isomorphism in the next sequence
$$Z\simeq\LL aY\simeq\LL a{\RG aZ}\simeq\Rhom{\RG aR}{\RG aZ}$$
while the other isomorphisms are from~\cite[Theorem~(0.3)$^*$ and Corollary]{lipman:lhcs}.
This explains the first step in the next sequence.
\begin{align*}
\sup(Z)
&=\sup(\Rhom{\RG aR}{\RG aZ})\\
&\leq\sup(\RG aZ)-\inf(\RG aR) \\
&\leq\sup(\Lotimes KZ)+n
\end{align*}
The second step is from Fact~\ref{disc151112a}\eqref{disc151112a5}.
The third step follows from the equality $\sup(\RG aZ)=\sup(\Lotimes KY)$ of~\cite[Theorem~2.1]{foxby:daafuc},
and the inequality $\inf(\RG aR)\geq-n$ which is via the \v Cech complex. 
This explains the first two rows of (in)equalities from part~\eqref{lem150604a2z},
and the third row follows by definition.

\eqref{lem150604a2b}
This follows from part~\eqref{lem150604a2z} and Fact~\ref{disc151112a}.
\end{proof}

\begin{disc}\label{disc151114a}
Since $Z\simeq 0$ if and only if
$\inf(Z)=\infty$, the previous two results also have the following conclusions:
one has $Z\simeq 0$ if and only if $\Lotimes KZ\simeq 0$. However, we already know this 
because of Remark~\ref{disc151211a} and Fact~\ref{cor130528a}.
\end{disc}

We continue with some useful computations of homological dimensions.
Note that the next result shows that the quantities $\fd_R(\Lotimes LZ)$ and
$\fd_R(Z)$ are simultaneously finite, and similarly for $\pd$.

\begin{lem}\label{lem151206a}
Let  $Z\in\catd(R)$, let $\y=y_1,\ldots,y_m\in\fa$, and set
$L:=K^R(\y)$.
Assume that $\supp_R(Z)\subseteq\VE(\fa)$.
Then we have 
\begin{gather*}
\fd_R(\Lotimes LZ)=m+\fd_R(Z)\\
\pd_R(\Lotimes LZ)=m+\pd_R(Z).
\end{gather*}
\end{lem}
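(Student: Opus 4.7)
The plan is to exploit Foxby's module-based characterizations of the two dimensions,
$$\fd_R(X)=\sup\{\sup(\Lotimes NX):N\text{ is an $R$-module}\},\qquad \pd_R(X)=\sup\{-\inf(\Rhom XN):N\text{ is an $R$-module}\},$$
and then to reduce each equality to the corresponding Koszul computation already established in Lemmas~\ref{lem150604a1} and~\ref{lem150604a2}.

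For the flat-dimension identity, I would fix an arbitrary $R$-module $N$ and use the standard isomorphism $\Lotimes N{(\Lotimes LZ)} \simeq \Lotimes L{(\Lotimes NZ)}$. By Fact~\ref{cor130528a} one has $\supp_R(\Lotimes NZ) = \supp_R(N) \cap \supp_R(Z) \subseteq \VE(\fa)$, so Lemma~\ref{lem150604a1}\eqref{lem150604a1z} applies with $\Lotimes NZ$ in place of $Z$ and yields $\sup(\Lotimes L{(\Lotimes NZ)}) = m + \sup(\Lotimes NZ)$. Taking suprema over all modules $N$ then produces $\fd_R(\Lotimes LZ) = m + \fd_R(Z)$, with the degenerate cases $Z \simeq 0$ and $\fd_R(Z) = \infty$ handled uniformly by the same formula.

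For the projective-dimension identity, I would combine tensor-Hom adjunction with the self-duality of the Koszul complex recorded in Remark~\ref{disc151211a} (which gives $\Rhom LV \simeq \shift^{-m}(\Lotimes LV)$ for any $V\in\catd(R)$) to obtain
$$\Rhom{\Lotimes LZ}N \simeq \Rhom L{\Rhom ZN} \simeq \shift^{-m}\bigl(\Lotimes L{\Rhom ZN}\bigr).$$
Fact~\ref{cor130528a} now gives $\cosupp_R(\Rhom ZN) = \supp_R(Z) \cap \cosupp_R(N) \subseteq \VE(\fa)$, so Lemma~\ref{lem150604a2}\eqref{lem150604a2z} applied to $\Rhom ZN$ yields $\inf(\Lotimes L{\Rhom ZN}) = \inf(\Rhom ZN)$. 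Consequently $\inf(\Rhom{\Lotimes LZ}N) = -m + \inf(\Rhom ZN)$, and taking suprema of the negatives over $N$ delivers $\pd_R(\Lotimes LZ) = m + \pd_R(Z)$.

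The main obstacle I anticipate is bookkeeping rather than content: one must be sure that these module-based characterizations of $\fd_R$ and $\pd_R$ hold in the present level of generality, with no boundedness or finite-generation hypotheses on $Z$ and including the case where a dimension is infinite. Once those characterizations are in hand, each identity collapses to a short application of the Koszul machinery developed in Section~\ref{sec151206b}.
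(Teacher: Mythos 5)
Your proposal is correct and matches the paper's own argument: the flat-dimension identity is proved verbatim the same way (associativity, $\supp_R(\Lotimes NZ)\subseteq\VE(\fa)$, Lemma~\ref{lem150604a1}, and the module characterization of $\fd$ from Avramov--Foxby), and your projective-dimension argument via adjunction, Koszul self-duality, and Lemma~\ref{lem150604a2} is precisely the ``similarly'' the paper leaves to the reader (and which reappears explicitly in the proof of Proposition~\ref{prop151115b}).
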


\begin{proof}
Let $N$ be an $R$-module, and note that $\supp_R(\Lotimes NZ)\subseteq\supp_R(Z)\subseteq\VE(\fa)$.
Using this with the associativity isomorphism
$\Lotimes N{(\Lotimes LZ)}\simeq\Lotimes L{(\Lotimes NZ)}$
we conclude from  Lemma~\ref{lem150604a1}\eqref{lem150604a1z} that
\begin{align*}
\sup(\Lotimes N{(\Lotimes LZ)})
&=\sup(\Lotimes L{(\Lotimes NZ)})
=m+\sup(\Lotimes NZ).
\end{align*}
Thus, it follows from~\cite[Proposition~2.4.F]{avramov:hdouc}
that we have
\begin{align*}
\fd_R(\Lotimes LZ)
&=\sup\{\sup(\Lotimes N{(\Lotimes LZ)})\mid\text{$N$ is an $R$-module}\}\\
&=m+\sup\{\sup(\Lotimes NZ)\mid\text{$N$ is an $R$-module}\}\\
&=m+\fd_R(Z).
\end{align*}
One verifies the equality $\pd_R(\Lotimes LZ)=m+\pd_R(Z)$ similarly.
\end{proof}

The next result is verified like the previous one.

\begin{lem}\label{lem151206b}
Let  $Z\in\catd(R)$, let $\y=y_1,\ldots,y_m\in\fa$, and set
$L:=K^R(\y)$.
Assume that $\cosupp_R(Z)\subseteq\VE(\fa)$.
Then we have 
$\id_R(\Rhom LZ)=m+\id_R(Z)$.
\end{lem}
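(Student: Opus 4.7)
The plan is to mimic the proof of Lemma~\ref{lem151206a}, with $\Rhom$ and $\id$ in place of $\Lotimes$ and $\fd$, translating between the two flavors via the self-dual nature of $L$ recorded in Remark~\ref{disc151211a}. First I would invoke the standard analog of \cite[Proposition~2.4.F]{avramov:hdouc} for injective dimension, namely the characterization
\[
\id_R(W)=\sup\{-\inf(\Rhom NW)\mid N\text{ an $R$-module}\}.
\]
This reduces the desired equality to proving, for each $R$-module $N$, that $\inf(\Rhom N{\Rhom LZ})=-m+\inf(\Rhom NZ)$.

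Fix an $R$-module $N$. The cosupport hypothesis transfers from $Z$ to $\Rhom NZ$: by Fact~\ref{cor130528a},
\[
\cosupp_R(\Rhom NZ)=\supp_R(N)\cap\cosupp_R(Z)\subseteq\VE(\fa),
\]
which is exactly the input needed in order to apply Lemma~\ref{lem150604a2}(a) to the complex $\Rhom NZ$ with the sequence $\y$. Combining the swap isomorphism $\Rhom N{\Rhom LZ}\simeq\Rhom L{\Rhom NZ}$ with the self-dual identification $\Rhom LW\simeq\shift^{-m}(\Lotimes LW)$ from Remark~\ref{disc151211a}, I obtain
\[
\Rhom N{\Rhom LZ}\simeq\shift^{-m}(\Lotimes L{\Rhom NZ}),
\]
so that $\inf(\Rhom N{\Rhom LZ})=-m+\inf(\Lotimes L{\Rhom NZ})$.

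Now Lemma~\ref{lem150604a2}(a), applied to $\Rhom NZ$, gives $\inf(\Lotimes L{\Rhom NZ})=\inf(\Rhom NZ)$, and hence
\[
-\inf(\Rhom N{\Rhom LZ})=m-\inf(\Rhom NZ).
\]
Taking the supremum of both sides over all $R$-modules $N$ and using the characterization above on each side yields the claimed equality $\id_R(\Rhom LZ)=m+\id_R(Z)$.

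The only real obstacle is bookkeeping: ensuring the shift $\shift^{-m}$ in Remark~\ref{disc151211a} lines up with the sign of $m$ in Lemma~\ref{lem150604a2}(a) so that the two contributions combine into $+m$ rather than canceling. Once the self-duality is used to recast $\Rhom LW$ as a shift of $\Lotimes LW$, the rest of the argument is a verbatim transcription of Lemma~\ref{lem151206a}, with Lemma~\ref{lem150604a2} playing the role that Lemma~\ref{lem150604a1} played there.
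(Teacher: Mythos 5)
Your proof is correct and is exactly what the paper has in mind when it says the lemma is ``verified like the previous one'': you mirror the proof of Lemma~\ref{lem151206a}, replacing $\Lotimes$/$\fd$ with $\Rhom$/$\id$, using $\cosupp_R(\Rhom NZ)=\supp_R(N)\cap\cosupp_R(Z)\subseteq\VE(\fa)$ from Fact~\ref{cor130528a}, the swap isomorphism $\Rhom N{\Rhom LZ}\simeq\Rhom L{\Rhom NZ}$, and Lemma~\ref{lem150604a2}\eqref{lem150604a2z} combined with the self-dual identification $\Rhom LW\simeq\shift^{-m}\Lotimes LW$ from Remark~\ref{disc151211a}. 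The sign bookkeeping you flagged is fine: $\inf(\Rhom N{\Rhom LZ})=\inf(\Rhom NZ)-m$, so $-\inf(\Rhom N{\Rhom LZ})=m-\inf(\Rhom NZ)$, and taking the supremum over all $R$-modules $N$ gives $\id_R(\Rhom LZ)=m+\id_R(Z)$ via the injective counterpart of the cited Avramov--Foxby characterization.
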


\section{Induced Isomorphisms}\label{sec151104b}

This section contains Theorem~\ref{thm151128a} from the introduction, with several other results of the same ilk. 
The main idea is to replace the homologically finite assumption
with $\fa$-adically finiteness in some well-known isomorphism theorems.
We begin with tensor-evaluation.

\begin{thm}\label{lem150525a}
Let $M$ be an $\fa$-adically finite $R$-complex,
and let $Y\in\catd_-(R)$ and $U,V,Z\in\catd(R)$.
Assume that $\supp_R(V),\cosupp_R(U)\subseteq\VE(\fa)$ and that either $\fd_R(M)$ or $\fd_R(Z)$ is finite.
Consider an exact triangle 
$$\Lotimes{\Rhom MY}{Z}\xra{\omega_{MYZ}}\Rhom M{\Lotimes YZ}\to C\to $$
in $\catd(R)$ where
$\omega_{MYZ}$ is the natural tensor evaluation morphism.
\begin{enumerate}[\rm(a)]
\item \label{lem150525a1}
One has $\supp_R(C)\bigcap\VE(\fa)=\emptyset=\cosupp_R(C)\bigcap\VE(\fa)$.
\item \label{lem150525a2}
The  morphisms $\Lotimes V{\omega_{MYZ}}$ and $\Rhom V{\omega_{MYZ}}$ and $\Rhom {\omega_{MYZ}}U$ are isomorphisms.
\end{enumerate}
\end{thm}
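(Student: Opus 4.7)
The plan is to reduce part (a) to the classical tensor-evaluation isomorphism for homologically finite complexes (which applies to $\Lotimes KM$), and then to derive part (b) as an essentially formal consequence of part (a) via the product formulas for support and cosupport in Fact~\ref{cor130528a}.

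For part (a), I would begin by applying $\Lotimes K-$ to the given exact triangle. The main claim is that $\Lotimes K{\omega_{MYZ}}$ is an isomorphism, so that $\Lotimes KC\simeq 0$. To see this, I would use the self-dual nature of the Koszul complex from Remark~\ref{disc151211a}: the isomorphisms $\Lotimes KW\simeq\shift^n\Rhom KW$ combine with Hom--tensor adjunction to produce natural identifications
$$\Lotimes K{\Rhom MY}\simeq\shift^n\Rhom{\Lotimes KM}{Y}
\qquad
\Lotimes K{\Rhom M{\Lotimes YZ}}\simeq\shift^n\Rhom{\Lotimes KM}{\Lotimes YZ}$$
and, using associativity,
$$\Lotimes K{\Lotimes{\Rhom MY}{Z}}\simeq\shift^n\Lotimes{\Rhom{\Lotimes KM}{Y}}{Z}.$$
Under these identifications $\Lotimes K{\omega_{MYZ}}$ becomes (a shift of) the classical tensor-evaluation morphism for $\Lotimes KM$ in place of $M$. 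Since $M$ is $\fa$-adically finite, Fact~\ref{thm130612a} gives $\Lotimes KM\in\catdfb(R)$; moreover, if $\fd_R(M)<\infty$ then $\fd_R(\Lotimes KM)<\infty$, and we have $Y\in\catd_-(R)$, so the classical tensor-evaluation isomorphism applies in either finiteness regime. Thus $\Lotimes K{\omega_{MYZ}}$ is an isomorphism and $\Lotimes KC\simeq 0$. By Fact~\ref{cor130528a} this forces
$$\emptyset=\supp_R(\Lotimes KC)=\supp_R(K)\cap\supp_R(C)=\VE(\fa)\cap\supp_R(C),$$
using the standard identification $\supp_R(K)=\VE(\fa)$. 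For the cosupport statement, the self-duality $\Rhom KC\simeq\shift^{-n}\Lotimes KC\simeq 0$ together with the formula $\cosupp_R(\Rhom KC)=\supp_R(K)\cap\cosupp_R(C)$ from Fact~\ref{cor130528a} gives $\VE(\fa)\cap\cosupp_R(C)=\emptyset$.

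For part (b), I would use that a morphism in a triangulated category is an isomorphism if and only if its cone is acyclic, together with the hypotheses $\supp_R(V),\cosupp_R(U)\subseteq\VE(\fa)$ and the support/cosupport product formulas. The cone of $\Lotimes V{\omega_{MYZ}}$ is (up to shift) $\Lotimes VC$, whose small support is $\supp_R(V)\cap\supp_R(C)\subseteq\VE(\fa)\cap\supp_R(C)=\emptyset$ by~(a), hence $\Lotimes VC\simeq 0$. The cone of $\Rhom V{\omega_{MYZ}}$ is $\Rhom VC$, whose cosupport is $\supp_R(V)\cap\cosupp_R(C)\subseteq\VE(\fa)\cap\cosupp_R(C)=\emptyset$. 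The cone of $\Rhom{\omega_{MYZ}}U$ is (up to shift) $\Rhom CU$, whose cosupport is $\supp_R(C)\cap\cosupp_R(U)\subseteq\supp_R(C)\cap\VE(\fa)=\emptyset$. In each case the vanishing of (co)support forces the complex to be zero by Fact~\ref{cor130528a}, and the corresponding induced morphism is an isomorphism.

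The main obstacle is the bookkeeping in the second paragraph: carefully verifying that the natural identifications coming from self-duality and adjunction actually carry $\Lotimes K{\omega_{MYZ}}$ to the tensor-evaluation morphism for $\Lotimes KM$, so that one is entitled to invoke the classical isomorphism. Once this naturality is in hand, the rest is an efficient application of the calculus of support and cosupport from~\cite{sather:scc}.
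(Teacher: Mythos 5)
Your proposal is correct and takes essentially the same route as the paper's proof: reduce to the classical tensor-evaluation isomorphism for the homologically finite complex obtained by applying the Koszul complex, conclude $\Lotimes KC\simeq 0$, use self-duality for the cosupport statement, and then deduce part (b) from the support/cosupport product formulas. The only cosmetic difference is that you work directly with $\Lotimes KM$, while the paper uses $\Rhom KM$ (the same complex up to a shift of $n$); the paper also supplies the explicit commutative diagram of evaluation morphisms that you flag as the bookkeeping obstacle, so your instinct that this is the one step requiring care matches what the authors actually write out.
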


\begin{proof}
The fact that $M$ is $\fa$-adically finite implies that $\Rhom KM\in\catdfb(R)$.
If $\fd_R(M)<\infty$, then $\fd_R(\Rhom KM)<\infty$, that is, $\pd_R(\Rhom KM)<\infty$ because of the homological finiteness.
Thus, when either $\fd_R(M)$ or $\fd_R(Z)$ is finite,
the tensor evaluation morphism $\omega_{\Rhom KMYZ}$ in the next commutative diagram
$$\xymatrix@C=15mm{
\Lotimes K{\Lotimes{\Rhom MY}{Z}}
\ar[r]^-{\Lotimes{\theta_{KMY}}Z}_-\simeq
\ar[d]_{\Lotimes K{\omega_{MYZ}}}
&\Lotimes{\Rhom{\Rhom KM}Y}Z 
\ar[d]^{\omega_{\Rhom KMYZ}}_-\simeq \\
\Lotimes{K}{\Rhom M{\Lotimes YZ}}
\ar[r]^-{\theta_{KM\Lotimes YZ}}_-\simeq
&\Rhom{\Rhom KM}{\Lotimes YZ}.
}$$
is an isomorphism in $\catd(R)$, by~\cite[Lemma~4.4(F)]{avramov:hdouc};
this result also explains the horizontal isomorphisms.
It follows that $\Lotimes K{\omega_{MYZ}}$ is an isomorphism as well.
We conclude that $\Lotimes KC\simeq 0$, so by Fact~\ref{cor130528a} we have
$$\emptyset=\supp_R(\Lotimes KC)=\supp_R(K)\bigcap\supp_R(C)=\VE(\fa)\bigcap\supp_R(C).$$
This  implies that $\emptyset=\supp_R(V)\bigcap\supp_R(C)$,
so $\Lotimes VC\simeq 0$.
The induced triangle 
$$\Lotimes V{\Lotimes{\Rhom MY}{Z}}\xra{\Lotimes V{\omega_{MYZ}}}\Lotimes V{\Rhom M{\Lotimes YZ}}\to \Lotimes VC\to $$
shows that $\Lotimes V{\omega_{MYZ}}$ is an isomorphism.
The isomorphism $\Rhom {\omega_{MYZ}}U$ is verified similarly.

On the other hand, the  self-dual nature~\ref{disc151211a} of $K$ implies that $\Rhom K-\simeq\shift^{-n}\Lotimes K-$,
so the induced morphism $\Rhom K{\omega_{MYZ}}$ is also an isomorphism.
We conclude as above that
$\emptyset=\VE(\fa)\bigcap\cosupp_R(C)$ \footnote{See also~\cite[Corollary~4.9]{benson:csc}.}
and that 
the morphism $\Rhom{V}{\omega_{MYZ}}$ is  an isomorphism, as desired.
\end{proof}

Next, we consider Hom-evaluation.

\begin{thm}\label{lem150525b}
Let $M$ be an $\fa$-adically finite $R$-complex,
and let 
$Y\in\catdb(R)$ and $U,V,Z\in\catd(R)$.
Assume that $\supp_R(V),\cosupp_R(U)\subseteq\VE(\fa)$ and that either $\fd_R(M)$ or $\id_R(Z)$ is finite.
Consider an exact triangle
$$\Lotimes M{\Rhom YZ}\xra{\theta_{MYZ}}\Rhom {\Rhom MY}Z\to C\to$$
in $\catd(R)$ where $\theta_{MYZ}$ is the natural Hom-evaluation morphism.
\begin{enumerate}[\rm(a)]
\item \label{lem150525b1}
One has $\supp_R(C)\bigcap\VE(\fa)=\emptyset=\cosupp_R(C)\bigcap\VE(\fa)$.
\item \label{lem150525b2}
The  morphisms $\Lotimes V{\theta_{MYZ}}$ and $\Rhom V{\theta_{MYZ}}$ and $\Rhom {\theta_{MYZ}}U$ are isomorphisms.
\end{enumerate}
\end{thm}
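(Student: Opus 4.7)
The plan is to mirror the strategy of the proof of Theorem~\ref{lem150525a}, replacing the tensor-evaluation morphism by Hom-evaluation throughout. The crucial observation, as in that proof, is that the $\fa$-adic finiteness of $M$ forces $\Rhom KM\in\catdfb(R)$. When $\fd_R(M)<\infty$, this gives $\fd_R(\Rhom KM)<\infty$, and homological finiteness then upgrades this to $\pd_R(\Rhom KM)<\infty$. Consequently, under either of the two hypotheses of the theorem, the classical Hom-evaluation morphism $\theta_{\Rhom KMYZ}\colon\Lotimes{\Rhom KM}{\Rhom YZ}\to\Rhom{\Rhom{\Rhom KM}Y}Z$ is an isomorphism in $\catd(R)$, by~\cite[Lemma~4.4]{avramov:hdouc}.

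Next I would assemble the commutative diagram
$$\xymatrix@C=15mm{
\Lotimes{\Rhom KM}{\Rhom YZ}
\ar[r]^-{\omega_{KM\Rhom YZ}}_-\simeq
\ar[d]_-{\theta_{\Rhom KMYZ}}^-\simeq
&\Rhom K{\Lotimes M{\Rhom YZ}}
\ar[d]^-{\Rhom K{\theta_{MYZ}}} \\
\Rhom{\Rhom{\Rhom KM}Y}Z
\ar[r]_-\simeq
&\Rhom K{\Rhom{\Rhom MY}Z}
}$$
whose top arrow is tensor-evaluation for $K$ (an isomorphism since $K\in\catdfb(R)$ has finite projective dimension) and whose bottom arrow is the composite of the Hom-tensor adjunction $\Rhom K{\Rhom{-}Z}\simeq\Rhom{\Lotimes K{-}}Z$ with the natural identification $\Lotimes K{\Rhom MY}\simeq\Rhom{\Rhom KM}Y$; the latter is built from adjunction and the self-dual nature~\ref{disc151211a} of $K$, with the two shifts by $\shift^n$ cancelling. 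Commutativity is a naturality check. Since three sides of the square are isomorphisms, so is $\Rhom K{\theta_{MYZ}}$.

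The remainder then runs just as in the proof of Theorem~\ref{lem150525a}. Applying $\Rhom K{-}$ to the defining triangle shows $\Rhom KC\simeq 0$, and Fact~\ref{cor130528a} gives
$\emptyset=\cosupp_R(\Rhom KC)=\supp_R(K)\bigcap\cosupp_R(C)=\VE(\fa)\bigcap\cosupp_R(C).$
Invoking the self-dual nature of $K$ once more, the isomorphism $\Rhom K{-}\simeq\shift^{-n}\Lotimes K{-}$ shows that $\Lotimes K{\theta_{MYZ}}$ is also an isomorphism, whence $\Lotimes KC\simeq 0$ and $\supp_R(C)\bigcap\VE(\fa)=\emptyset$. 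This establishes part~\eqref{lem150525b1}. For part~\eqref{lem150525b2}, the hypotheses $\supp_R(V),\cosupp_R(U)\subseteq\VE(\fa)$ together with part~\eqref{lem150525b1} and Fact~\ref{cor130528a} yield $\Lotimes VC\simeq\Rhom VC\simeq\Rhom CU\simeq 0$, and applying $\Lotimes V{-}$, $\Rhom V{-}$, and $\Rhom{-}U$ to the defining triangle reads off the three asserted isomorphisms.

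The main obstacle is verifying the commutativity of the square. Its bottom arrow packages together the Hom-tensor adjunction and the self-dual identification of $K$, and one has to check that the two paths around the square agree on the nose, not merely up to a shift or sign. All ingredients are standard natural transformations, so this reduces to a tedious but routine naturality diagram chase.
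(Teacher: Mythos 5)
Your proposal is correct and takes essentially the same approach as the paper: the paper proves that $\Lotimes K{\theta_{MYZ}}$ is an isomorphism via a square built from $\theta_{\Lotimes KM\,YZ}$ (an isomorphism by \cite[Lemma~4.4(I)]{avramov:hdouc} since $\Lotimes KM\in\catdfb(R)$), the Hom-evaluation map $\theta_{K\,\Rhom MY\,Z}$, and Hom-tensor adjointness, while you prove that $\Rhom K{\theta_{MYZ}}$ is an isomorphism using $\Rhom KM$ and tensor-evaluation for $K$ --- a variation that amounts only to the self-dual shift of $K$ from Remark~\ref{disc151211a}. The subsequent support/cosupport argument and the deduction of the three induced isomorphisms coincide with the paper's.
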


\begin{proof}
The fact that $M$ is $\fa$-adically finite implies that $\Lotimes KM\in\catdfb(R)$.
If $\fd_R(M)<\infty$, then $\fd_R(\Lotimes KM)<\infty$, that is, $\pd_R(\Lotimes KM)<\infty$ because of the homological finiteness.
Thus, when either $\fd_R(M)$ or $\id_R(Z)$ is finite,
the morphism $\theta_{\Lotimes KMYZ}$ is an isomorphism in $\catd(R)$, by~\cite[Lemma~4.4(I)]{avramov:hdouc}.
This result also explains the second vertical isomorphism in the following commutative diagram 
$$\xymatrix@C=6mm{
\Lotimes{\Lotimes KM}{\Rhom YZ}
\ar[r]^-{\Lotimes K{\theta_{MYZ}}}\ar[d]_{\theta_{\Lotimes KMYZ}}^\simeq
&\Lotimes K{\Rhom{\Rhom MY}Z} 
\ar[d]^{\theta_{K\Rhom MY Z}}_\simeq \\
\Rhom{\Rhom{\Lotimes KM}Y}Z 
\ar[r]^-\simeq
&\Rhom{\Rhom{K}{\Rhom MY}}Z
}$$
in $\catd(R)$.
The unspecified isomorphism is induced by Hom-tensor adjointness.
It follows that $\Lotimes K{\theta_{MYZ}}$ is an isomorphism, and
the rest of the desired conclusions follow as in the proof of Theorem~\ref{lem150525a}.
\end{proof}

Similarly, we have the next result, via~\cite[Proposition~2.2]{christensen:apac}.

\begin{thm}\label{lem151115a}
Let $M$ be an $\fa$-adically finite $R$-complex,
and let $U,V,Y\in\catd(R)$ and $Z\in\catd_-(R)$ be such that $\supp_R(V),\cosupp_R(U)\subseteq\VE(\fa)$.
Assume that at least one of the following conditions holds:
\begin{enumerate}[\rm(1)]
\item\label{lem151115a3}
$\pd_R(Y)<\infty$, or
\item\label{lem151115a4}
$Z\in\catdb(R)$ and $\fd_R(M)<\infty$.
\end{enumerate}
Consider an exact triangle 
$$\Lotimes{\Rhom YZ}{M}\xra{\omega_{YZM}}\Rhom Y{\Lotimes ZM}\to C\to $$
in $\catd(R)$ where
$\omega_{MYZ}$ is the natural tensor evaluation morphism.
\begin{enumerate}[\rm(a)]
\item \label{lem151115a1}
One has $\supp_R(C)\bigcap\VE(\fa)=\emptyset=\cosupp_R(C)\bigcap\VE(\fa)$.
\item \label{lem151115a2}
The  morphisms $\Lotimes V{\omega_{YZM}}$ and $\Rhom V{\omega_{YZM}}$ and $\Rhom {\omega_{YZM}}U$ are isomorphisms.
\end{enumerate}
\end{thm}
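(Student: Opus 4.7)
The plan is to follow the template of the proofs of Theorems~\ref{lem150525a} and~\ref{lem150525b}: first show that $\Lotimes K{\omega_{YZM}}$ (and, via the self-dual nature~\ref{disc151211a}, also $\Rhom K{\omega_{YZM}}$) is an isomorphism by reducing to a classical tensor-evaluation result applied to a homologically finite complex; then use this to establish the support and cosupport statement of part~(a); and finally derive part~(b) by applying appropriate derived functors to the defining triangle of $C$.

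For the reduction, observe first that $M$ being $\fa$-adically finite forces $\Lotimes KM \in \catdfb(R)$. Under hypothesis~(1) one has $\pd_R(Y)<\infty$ directly; under hypothesis~(2), Lemma~\ref{lem151206a} gives $\fd_R(\Lotimes KM)=n+\fd_R(M)<\infty$, and because $\Lotimes KM$ is homologically finite this finite flat dimension upgrades to finite projective dimension. In either case, Christensen's tensor-evaluation isomorphism \cite[Proposition~2.2]{christensen:apac} applies to the triple $(Y,Z,\Lotimes KM)$, so the morphism $\omega_{YZ\Lotimes KM}$ is an isomorphism. I would then assemble the commutative diagram
\[
\xymatrix@C=10mm{
\Lotimes K{(\Lotimes{\Rhom YZ}M)} \ar[r]^-{\Lotimes K{\omega_{YZM}}}\ar[d]_\simeq & \Lotimes K{\Rhom Y{\Lotimes ZM}} \ar[d]^\simeq \\
\Lotimes{\Rhom YZ}{\Lotimes KM} \ar[r]^-{\omega_{YZ\Lotimes KM}}_\simeq & \Rhom Y{\Lotimes Z{\Lotimes KM}}
}
\]
where the left vertical arrow is the derived tensor associativity/commutativity isomorphism and the right vertical is the tensor-evaluation $\Lotimes K{\Rhom Y{-}}\simeq\Rhom Y{\Lotimes K{-}}$, which is always an isomorphism because $K$ is a bounded complex of finitely generated free $R$-modules. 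The diagram forces $\Lotimes K{\omega_{YZM}}$ to be an isomorphism, and hence $\Lotimes K C \simeq 0$.

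For part~(a), Fact~\ref{cor130528a} then gives
\[
\emptyset = \supp_R(\Lotimes KC) = \supp_R(K)\cap\supp_R(C) = \VE(\fa)\cap\supp_R(C).
\]
The self-dual nature~\ref{disc151211a}, namely $\Rhom K{-}\simeq\shift^{-n}\Lotimes K{-}$, shows $\Rhom K{\omega_{YZM}}$ is also an isomorphism, so $\Rhom KC\simeq 0$, and the cosupport formula from Fact~\ref{cor130528a} yields $\VE(\fa)\cap\cosupp_R(C)=\emptyset$. For part~(b), combining (a) with the hypotheses $\supp_R(V),\cosupp_R(U)\subseteq\VE(\fa)$ and the support/cosupport formulas for derived tensor and Hom from Fact~\ref{cor130528a} delivers $\Lotimes VC\simeq 0$, $\Rhom VC\simeq 0$, and $\Rhom CU\simeq 0$; applying $\Lotimes V{-}$, $\Rhom V{-}$, and $\Rhom{-}U$ to the defining triangle of $C$ then produces the three claimed isomorphisms.

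The main obstacle I expect is the case analysis in the reduction step: one must verify that each of the two hypotheses actually delivers the finite-dimension input required by Christensen's tensor-evaluation theorem applied to $(Y,Z,\Lotimes KM)$. In particular, the passage from finite flat dimension to finite projective dimension for the homologically finite complex $\Lotimes KM$ under hypothesis~(2) needs to be justified carefully, and one must check in each case that the boundedness condition on $Z$ (either $Z\in\catd_-(R)$ alone or $Z\in\catdb(R)$) matches what Christensen's result requires.
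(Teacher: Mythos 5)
Your proposal is correct and follows essentially the same route as the paper: the paper's proof of Theorem~\ref{lem151115a} is precisely to repeat the $K$-tensoring/support template of Theorems~\ref{lem150525a} and~\ref{lem150525b}, with \cite[Proposition~2.2]{christensen:apac} supplying the tensor-evaluation isomorphism for the triple $(Y,Z,\Lotimes KM)$, exactly as in your diagram. Your closing caveats (matching the hypotheses of that result, and upgrading finite flat dimension to finite projective dimension for the homologically finite complex $\Lotimes KM$) are handled the same way the paper handles them in the proofs of the two preceding theorems.
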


Next, we document some special cases of the previous results. 

\begin{cor}\label{lem151114a}
Let $M$ be an $\fa$-adically finite $R$-complex, and
let $Y\in\catd_-(R)$ and $U,V\in\catd(R)$ such that $\supp_R(V),\cosupp_R(U)\subseteq\VE(\fa)$. Let $R\to S$ be a ring homomorphism.
Assume 
that either $\fd_R(M)<\infty$ or  $\fd_R(S)<\infty$.
Consider an exact triangle
$$\Lotimes{\Rhom MY}{S}\xra{\alpha_{MYS}}\Rhom[S] {\Lotimes MS}{\Lotimes YS}\to C\to$$
in $\catd(R)$ where $\alpha_{MYS}$ is the natural morphism.
\begin{enumerate}[\rm(a)]
\item \label{lem151114a1}
One has $\supp_R(C)\bigcap\VE(\fa)=\emptyset=\cosupp_R(C)\bigcap\VE(\fa)$.
\item \label{lem151114a2}
The  morphisms $\Lotimes V{\alpha_{MYZ}}$ and $\Rhom V{\alpha_{MYZ}}$ and $\Rhom {\alpha_{MYZ}}U$ are isomorphisms.
\end{enumerate}
\end{cor}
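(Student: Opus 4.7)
The plan is to deduce Corollary~\ref{lem151114a} directly from Theorem~\ref{lem150525a} with $Z := S$. The essential point is that the target $\Rhom[S]{\Lotimes MS}{\Lotimes YS}$ of $\alpha_{MYS}$ is related to the target $\Rhom{M}{\Lotimes YS}$ of the classical tensor-evaluation morphism $\omega_{MYS}$ of Theorem~\ref{lem150525a} through the derived Hom-tensor adjunction along the ring map $R\to S$, which furnishes a natural isomorphism
$$\xi\colon\Rhom[S]{\Lotimes MS}{\Lotimes YS}\xra{\simeq}\Rhom{M}{\Lotimes YS}$$
in $\catd(R)$.

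First I would verify that $\xi\circ\alpha_{MYS}=\omega_{MYS}$ in $\catd(R)$. This is a compatibility check performed at the level of resolutions: choose a semi-flat resolution of $M$ and a semi-injective resolution of $\Lotimes YS$ to compute the relevant derived functors, and trace through the definitions of the three natural transformations (the base-change map $\alpha$, tensor evaluation $\omega$, and the adjunction $\xi$). Once this identification is in hand, the exact triangle defining $C$ in the corollary is isomorphic in $\catd(R)$ to the corresponding exact triangle from Theorem~\ref{lem150525a} with $Z=S$, so the cones agree up to canonical isomorphism.

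Next I would check that the hypotheses of Theorem~\ref{lem150525a} are satisfied with $Z:=S$: the complex $M$ is $\fa$-adically finite, we have $Y\in\catd_-(R)$, the conditions $\supp_R(V),\cosupp_R(U)\subseteq\VE(\fa)$ are the same, and "$\fd_R(M)<\infty$ or $\fd_R(S)<\infty$" is precisely the flat-dimension hypothesis of the theorem after substituting $Z=S$. Theorem~\ref{lem150525a} then yields $\supp_R(C)\cap\VE(\fa)=\emptyset=\cosupp_R(C)\cap\VE(\fa)$, along with the claim that the morphisms $\Lotimes V{\omega_{MYS}}$, $\Rhom V{\omega_{MYS}}$, and $\Rhom{\omega_{MYS}}U$ are isomorphisms. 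Transferring these conclusions along the isomorphism $\xi$ gives the analogous statements for $\alpha_{MYS}$, since applying $\Lotimes V-$, $\Rhom V-$, or $\Rhom-U$ to a natural isomorphism produces a natural isomorphism, and the cone is preserved up to isomorphism.

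The step I expect to be the main obstacle is the identification $\xi\circ\alpha_{MYS}=\omega_{MYS}$. This is not deep content, but it requires a careful unwinding of the (left unspecified) definition of $\alpha_{MYS}$ in terms of semi-flat and semi-injective resolutions, and checking that the relevant naturality square for Hom-tensor adjunction commutes on the nose in $\catd(R)$. Once this purely formal bookkeeping is dispatched, parts (a) and (b) follow immediately from Theorem~\ref{lem150525a}.
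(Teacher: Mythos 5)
Your proposal is correct and follows essentially the same route as the paper: the paper's proof sets up exactly the commutative square identifying $\alpha_{MYS}$ with $\omega_{MYS}$ through Hom-tensor adjointness and Hom-cancellation (your $\xi$), then applies $\Lotimes V-$, $\Rhom V-$, $\Rhom -U$ and invokes Theorem~\ref{lem150525a} with $Z=S$. The only cosmetic difference is that the paper deduces part~(a) from part~(b) via Fact~\ref{cor130528a} rather than transferring the support statement on the cone directly, but both are immediate once the diagram is in place.
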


\begin{proof}
Consider the following commutative diagram in $\catd(R)$
$$\xymatrix{
\Lotimes{\Rhom MY}{S}
\ar[r]^-{\omega_{MYS}}\ar[d]_{\alpha_{MYS}}
&\Rhom M{\Lotimes YS} \\
\Rhom[S] {\Lotimes MS}{\Lotimes YS}\ar[r]^-\simeq
&
\Rhom M{\Rhom[S]{S}{\Lotimes YS}}. \ar[u]_\simeq
}$$
The unspecified isomorphisms are  Hom-cancellation and Hom-tensor adjointness.
Apply the functors $\Lotimes V-$ and $\Rhom V-$ and $\Rhom -U$ to this diagram, and use Theorem~\ref{lem150525a}
to prove part~\eqref{lem151114a2}. Then part~\eqref{lem151114a1} follows from Fact~\ref{cor130528a}.
\end{proof}

The next two lemmas are probably well-known. 
In the absence of suitable references, we include some proof.

\begin{lem}\label{lem151126a}
Let $X\in\catdf_+(R)$, and consider a set $\{N_\lambda\}_{\lambda\in\Lambda}\in\catd_+(R)$ 
such that $\inf(N_\lambda)\geq s$ for all $\lambda\in\Lambda$. 
Then the natural morphism
$$\Lotimes X{\prod_\lambda N_\lambda}\to\prod_\lambda(\Lotimes X{N_\lambda})$$
is an isomorphism in $\catd(R)$.
\end{lem}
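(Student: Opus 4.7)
The plan is to reduce the statement to a chain-level isomorphism after choosing suitable bounded-below resolutions. Since $X \in \catdf_+(R)$, the standard construction yields a semi-projective resolution $P \xra{\simeq} X$ in which each $P_i$ is a finitely generated free $R$-module and $P_i = 0$ for $i < \inf(X)$. Similarly, since each $N_\lambda$ lies in $\catd_+(R)$ with $\inf(N_\lambda) \geq s$, we may choose a semi-flat resolution $F_\lambda \xra{\simeq} N_\lambda$ with $F_{\lambda,i} = 0$ for $i < s$ (e.g., a bounded-below semi-projective resolution, or by good truncation of any semi-flat resolution).

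Because products in the category of $R$-complexes are exact, the induced morphism $\prod_\lambda F_\lambda \xra{\simeq} \prod_\lambda N_\lambda$ is a quasi-isomorphism. Since $P$ is semi-flat, we have identifications
$$\Lotimes X{\prod_\lambda N_\lambda} \simeq P \otimes_R \prod_\lambda F_\lambda \qquad \text{and} \qquad \prod_\lambda \Lotimes X{N_\lambda} \simeq \prod_\lambda (P \otimes_R F_\lambda)$$
in $\catd(R)$, under which the natural morphism of the statement is represented by the canonical chain map
$$\pi\colon P \otimes_R \prod_\lambda F_\lambda \longrightarrow \prod_\lambda (P \otimes_R F_\lambda).$$
The goal becomes showing that $\pi$ is an isomorphism of $R$-complexes.

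In degree $n$ the source is $\bigoplus_{i+j=n} P_i \otimes_R \prod_\lambda F_{\lambda,j}$; the boundedness conditions $P_i = 0$ for $i < \inf(X)$ and $F_{\lambda,j} = 0$ for $j < s$ force this to be a finite direct sum, with $i$ ranging only over $[\inf(X),\, n - s]$. Since each $P_i$ is finitely generated projective, one has $P_i \otimes_R \prod_\lambda F_{\lambda,j} \cong \prod_\lambda (P_i \otimes_R F_{\lambda,j})$, and because finite direct sums commute with arbitrary products in $\Mod R$, one obtains
$$\bigoplus_{i+j=n} P_i \otimes_R \prod_\lambda F_{\lambda,j} \cong \prod_\lambda \bigoplus_{i+j=n} (P_i \otimes_R F_{\lambda,j}) = \prod_\lambda (P \otimes_R F_\lambda)_n,$$
and tracking the definitions shows this is precisely $\pi$ in degree $n$. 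The main (and essentially only) subtlety is the bookkeeping to ensure this direct sum is finite: the uniform lower bound $s$ on the $\inf(N_\lambda)$ is exactly what is needed to allow the interchange of finite direct sum with arbitrary product.
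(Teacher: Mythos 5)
Your proof is correct and follows essentially the same route as the paper's: replace $X$ by a bounded-below, degree-wise finitely generated free resolution, arrange all the $N_\lambda$ to vanish below degree $s$, and then check degree by degree that the boundedness forces a finite direct sum, which commutes with arbitrary products because each $P_i$ is finitely generated free. The only cosmetic difference is that the paper simply truncates the complexes $N_\lambda$ rather than replacing them by semi-flat resolutions (the resolution of $X$ alone already computes $\Lotimes X-$), but this does not change the argument.
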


\begin{proof}
Let $F\xra\simeq X$ be a degree-wise finite semi-free resolution, that is, a quasiisomorphism where
$F$ is a bounded below complex of finitely generated free $R$-modules.
Truncate each $N_\lambda$ if necessary to assume that $(N_\lambda)_q=0$ for all $q< s$.
Since each $F_p$ is a finite-rank free module, the natural map
$$\Otimes {F_p}{\prod_\lambda}(N_\lambda)_q\to\prod_\lambda(\Otimes {F_p}{(N_\lambda})_q)$$
is an isomorphism for each $q$.
Thus, the product map
$$\prod_{p+q=i}\left(\Otimes {F_p}{\prod_\lambda(N_\lambda)_q}\right)
\to
\prod_{p+q=i}\left(\prod_{\lambda}(\Otimes {F_p}{(N_\lambda)_q})\right)
$$
is an isomorphism as well, for each $i$.
Using the natural isomorphism
$$\prod_{p+q=i}\left(\prod_{\lambda}\Otimes {F_p}{(N_\lambda)_q}\right)
\cong
\prod_\lambda\left(\prod_{p+q=i}\Otimes {F_p}{(N_\lambda)_q}\right)
$$
we conclude that the natural map
\begin{equation}\label{eq151126a}
\prod_{p+q=i}\left(\Otimes {F_p}{\prod_\lambda(N_\lambda)_q}\right)
\to
\prod_\lambda\left(\prod_{p+q=i}\Otimes {F_p}{(N_\lambda)_q}\right)
\end{equation}
is an isomorphism a well.

Our boundedness assumptions on $F$ and $N_\lambda$ imply that, for each $i$, we have
\begin{gather*}
\bigoplus_{p+q=i}\left(\Otimes {F_p}{\prod_\lambda(N_\lambda)_q}\right)
=
\prod_{p+q=i}\left(\Otimes {F_p}{\prod_\lambda(N_\lambda)_q}\right)\\
\prod_\lambda\left(\bigoplus_{p+q=i}\Otimes {F_p}{(N_\lambda)_q}\right)
=
\prod_\lambda\left(\prod_{p+q=i}\Otimes {F_p}{(N_\lambda)_q}\right)
\end{gather*}
so the isomorphism~\eqref{eq151126a}  with these equalities shows that the natural map
$$
\bigoplus_{p+q=i}\left(\Otimes {F_p}{\prod_\lambda (N_\lambda)_q}\right)
\to
\prod_\lambda\left(\bigoplus_{p+q=i}\Otimes {F_p}{(N_\lambda)_q}\right)
$$
is an isomorphism for each $i$. 
Thus, the chain map
$$
\Otimes {F}{\prod_\lambda N_\lambda}
\to
\prod_\lambda\left(\Otimes {F}{N_\lambda}\right)
$$
is an isomorphism. 
By design, this represents
the natural morphism
$$\Lotimes X{\prod_\lambda}N_\lambda\to\prod_\lambda(\Lotimes X{N_\lambda})$$
in $\catd(R)$, so this morphism is an isomorphism, as desired.
\end{proof}

The next result is proved like the previous one.

\begin{lem}\label{lem151126c}
Let $X\in\catdf_+(R)$, and consider a set $\{N_\lambda\}_{\lambda\in\Lambda}\in\catd_-(R)$ 
such that $\sup(N_\lambda)\leq s$ for all $\lambda\in\Lambda$. 
Then the natural morphism
$$
\bigoplus_\lambda(\PRhom X{N_\lambda})
\to
\PRhom X{\bigoplus_\lambda N_\lambda}
$$
is an isomorphism in $\catd(R)$.
\end{lem}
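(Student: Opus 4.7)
The plan is to mimic the proof of Lemma~\ref{lem151126a} with direct sums in the second slot of $\Hom$ replacing products in tensor products, using the compactness of finitely generated free modules to pull direct sums out of $\Hom$, plus boundedness to turn products into finite products so that they commute with direct sums.

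First I would fix a degree-wise finite semi-free resolution $F \xra\simeq X$, so that $F$ is a bounded below complex of finitely generated free $R$-modules. I would then truncate each $N_\lambda$ if necessary to ensure $(N_\lambda)_q = 0$ for all $q > s$. Since each $F_p$ is finitely generated free, the natural map
\begin{equation*}
\bigoplus_\lambda \Hom{F_p}{(N_\lambda)_q} \to \Hom{F_p}{\textstyle\bigoplus_\lambda (N_\lambda)_q}
\end{equation*}
is an isomorphism for every $p,q$.

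Next I would pass from the module level to the complex level. In the Hom complex, the $i$th component of $\Hom{F}{N}$ is $\prod_p \Hom{F_p}{N_{p+i}}$. Under our boundedness assumptions ($F_p = 0$ for $p \ll 0$ and $(N_\lambda)_q = 0$ for $q > s$), for each fixed $i$ only finitely many values of $p$ contribute a nonzero term to $\prod_p \Hom{F_p}{(N_\lambda)_{p+i}}$, uniformly in $\lambda$. Hence in this range every such product coincides with the corresponding direct sum, which allows me to commute $\prod_p$ past $\bigoplus_\lambda$. Combining this with the module-level isomorphism above yields, for each $i$, an isomorphism
\begin{equation*}
\bigoplus_\lambda \Hom{F}{N_\lambda}_i \xra{\cong} \Hom{F}{\textstyle\bigoplus_\lambda N_\lambda}_i
\end{equation*}
and these assemble into a chain map isomorphism $\bigoplus_\lambda \Hom{F}{N_\lambda} \to \Hom{F}{\bigoplus_\lambda N_\lambda}$. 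By construction this chain map represents the natural morphism in $\catd(R)$, so the result follows.

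The only genuine subtlety, and thus the main obstacle, is the bookkeeping at step two: verifying that for each degree $i$ the relevant product $\prod_p$ really does reduce to a finite one simultaneously for all $\lambda$, so that the commutation of $\prod_p$ with $\bigoplus_\lambda$ is legitimate. Once the uniform truncation $(N_\lambda)_q = 0$ for $q > s$ is in place, this reduction is immediate because the nonvanishing indices $p$ are bounded below by $F$'s lower bound and above by $s - i$, independently of $\lambda$. Everything else is a routine transcription of the argument used for Lemma~\ref{lem151126a}.
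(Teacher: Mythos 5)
Your proposal is correct and is exactly the argument the paper intends: it states that Lemma~\ref{lem151126c} ``is proved like the previous one,'' i.e., by dualizing the proof of Lemma~\ref{lem151126a} precisely as you do, using a degree-wise finite semi-free resolution, the compactness of finitely generated free modules, and the uniform boundedness to reduce the products in the Hom complex to finite ones so they commute with the direct sum over $\lambda$.
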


Next, we soup up the previous two results, first by proving Theorem~\ref{thm151128a} from the introduction.

\begin{thm}\label{lem151126b}
Let $M$ be an $\fa$-adically finite $R$-complex, and
let  $U,V\in\catd(R)$ be such that $\supp_R(V),\cosupp_R(U)\subseteq\VE(\fa)$. 
Consider a set $\{N_\lambda\}_{\lambda\in\Lambda}\in\catd_+(R)$ 
such that $\inf(N_\lambda)\geq s$ for all $\lambda\in\Lambda$. 
Consider an exact triangle 
$$\Lotimes M{\prod_\lambda N_\lambda}\xra p\prod_\lambda(\Lotimes M{N_\lambda})\to A\to$$
where $p$ is the natural morphism.
\begin{enumerate}[\rm(a)]
\item \label{lem151126b1}
One has $\supp_R(A)\bigcap\VE(\fa)=\emptyset=\cosupp_R(A)\bigcap\VE(\fa)$.
\item \label{lem151126b2}
The  morphisms $\Lotimes V{p}$ and $\Rhom V{p}$ and $\Rhom {p}U$ are isomorphisms.
\end{enumerate}
\end{thm}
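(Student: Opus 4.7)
My plan is to follow the template of Theorems~\ref{lem150525a} and~\ref{lem150525b}: apply $\Lotimes K{-}$ to the triangle to reduce to the homologically finite setting (via $\Lotimes KM \in \catdfb(R)$), show that $\Lotimes K p$ is an isomorphism, deduce $\Lotimes KA \simeq 0$ (and by self-duality $\Rhom KA \simeq 0$), and then use the support/cosupport assumptions on $V$ and $U$ to force the three induced morphisms in part~(b) to be isomorphisms.

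The crux is a commutative square in $\catd(R)$ of the form
$$\xymatrix@C=10mm{
\Lotimes K{\Lotimes M{\prod_\lambda N_\lambda}} \ar[r]^-{\Lotimes K p} \ar[d]_-\simeq & \Lotimes K{\prod_\lambda(\Lotimes M{N_\lambda})} \ar[d]^-\simeq \\
\Lotimes{(\Lotimes KM)}{\prod_\lambda N_\lambda} \ar[r]^-{p'} & \prod_\lambda\bigl(\Lotimes{(\Lotimes KM)}{N_\lambda}\bigr)
}$$
where the left vertical arrow is associativity of $\Lotimes$ and $p'$ is the natural morphism. By Fact~\ref{disc151112a}\eqref{disc151112a5}, the family $\{\Lotimes M{N_\lambda}\}$ is uniformly bounded below by $\inf(M)+s$, so Lemma~\ref{lem151126a}, applied with $K\in\catdfb(R)\subseteq\catdf_+(R)$, makes the right vertical arrow an isomorphism. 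A second application of Lemma~\ref{lem151126a}, this time with $\Lotimes KM\in\catdfb(R)$ (where the $\fa$-adic finiteness of $M$ enters) and the family $\{N_\lambda\}$, shows that $p'$ is an isomorphism. Hence $\Lotimes K p$ is an isomorphism, and rotating the triangle gives $\Lotimes KA \simeq 0$.

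Part~(a) then follows from Fact~\ref{cor130528a}: $\supp_R(A)\cap\VE(\fa) = \supp_R(K)\cap\supp_R(A) = \supp_R(\Lotimes KA) = \emptyset$. The self-dual nature~\ref{disc151211a} of $K$ gives $\Rhom KA \simeq \shift^{-n}\Lotimes KA \simeq 0$, and the identity $\cosupp_R(\Rhom KA)=\supp_R(K)\cap\cosupp_R(A)$ from Fact~\ref{cor130528a} then yields $\cosupp_R(A)\cap\VE(\fa)=\emptyset$. For part~(b), combining (a) with $\supp_R(V),\cosupp_R(U)\subseteq\VE(\fa)$ and Fact~\ref{cor130528a} forces $\supp_R(\Lotimes VA)$, $\cosupp_R(\Rhom VA)$, and $\cosupp_R(\Rhom AU)$ to be empty, so each of $\Lotimes VA$, $\Rhom VA$, $\Rhom AU$ vanishes. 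Applying $\Lotimes V{-}$, $\Rhom V{-}$, and $\Rhom -U$ to the triangle then produces the three claimed isomorphisms.

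The main obstacle is the first application of Lemma~\ref{lem151126a}: one must verify that the family $\{\Lotimes M{N_\lambda}\}$ is uniformly bounded below, which is why the infimum hypothesis on the $N_\lambda$ is essential. Once this is in hand, the remainder is formal manipulation of self-duality of the Koszul complex and the support/cosupport calculus of Fact~\ref{cor130528a}, precisely as in the preceding theorems of this section.
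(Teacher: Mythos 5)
Your proposal is correct and follows essentially the same route as the paper: reduce to showing $\Lotimes Kp$ is an isomorphism via two applications of Lemma~\ref{lem151126a} (once with $K$ and the uniformly bounded family $\{\Lotimes M{N_\lambda}\}$, once with $\Lotimes KM\in\catdfb(R)$ and $\{N_\lambda\}$) arranged in a commutative diagram, then finish with the Koszul self-duality and support/cosupport argument of Theorem~\ref{lem150525a}. The only difference is cosmetic: you spell out the concluding vanishing arguments that the paper dispatches with ``as in our previous results.''
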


\begin{proof}
As in our previous results, it suffices to show that $\Lotimes Kp$ is an isomorphism in $\catd(R)$.
As $M$ is $\fa$-adically finite, we have $K,\Lotimes KM\in\catdfb(R)$
and $\inf(\Lotimes M{N_{\lambda}})\geq s+\inf(M)$ by Fact~\ref{disc151112a}\eqref{disc151112a5}.
Thus, Lemma~\ref{lem151126a} explains the unspecified isomorphisms in the next commutative diagram.
$$\xymatrix{
\Lotimes {\Lotimes KM}{\prod_\lambda N_\lambda}\ar[rd]^\simeq\ar[d]_{\Lotimes Kp}
\\
\Lotimes K{\prod_\lambda(\Lotimes {M}{N_\lambda})}
\ar[r]^-\simeq
&\prod_\lambda(\Lotimes {\Lotimes KM}{N_\lambda})
}$$
We conclude that $\Lotimes Kp$ is also an isomorphism, as desired.
\end{proof}

The next result follows similarly from Lemma~\ref{lem151126c}.

\begin{thm}\label{lem151126d}
Let $M$ be an $\fa$-adically finite $R$-complex, and
let  $U,V\in\catd(R)$ such that $\supp_R(V),\cosupp_R(U)\subseteq\VE(\fa)$. 
Consider a set $\{N_\lambda\}_{\lambda\in\Lambda}\in\catd_-(R)$ 
such that $\sup(N_\lambda)\leq s$ for all $\lambda\in\Lambda$. 
Consider an exact triangle 
$$
\bigoplus_\lambda(\PRhom M{N_\lambda})
\xra q
\PRhom M{\bigoplus_\lambda N_\lambda}\to B\to
$$
where $q$ is the natural morphism. 
\begin{enumerate}[\rm(a)]
\item \label{lem151126d1}
One has $\supp_R(B)\bigcap\VE(\fa)=\emptyset=\cosupp_R(B)\bigcap\VE(\fa)$.
\item \label{lem151126d2}
The  morphisms $\Lotimes V{p}$ and $\Rhom V{p}$ and $\Rhom {p}U$ are isomorphisms.
\end{enumerate}
\end{thm}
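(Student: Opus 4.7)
The plan is to adapt the argument of Theorem~\ref{lem151126b}, interchanging the roles of $\Lotimes$ and $\Rhom$ and of direct products with direct sums, and using Lemma~\ref{lem151126c} in place of Lemma~\ref{lem151126a}. As in the previous results of the section, everything reduces to showing that $\Lotimes Kq$ is an isomorphism in $\catd(R)$. Once this is established, part~(a) follows formally: from $\Lotimes KB\simeq 0$ and Fact~\ref{cor130528a} we get $\VE(\fa)\cap\supp_R(B)=\supp_R(K)\cap\supp_R(B)=\emptyset$; the self-dual nature~\ref{disc151211a} of $K$ gives $\Rhom KB\simeq\shift^{-n}\Lotimes KB\simeq 0$, so Fact~\ref{cor130528a} yields $\VE(\fa)\cap\cosupp_R(B)=\supp_R(K)\cap\cosupp_R(B)=\emptyset$ as well. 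Part~(b) is then a direct computation: applying $\Lotimes V-$, $\Rhom V-$, and $\Rhom -U$ to the defining triangle, the three morphisms in question (where the paper's ``$p$'' should be read as $q$) fit into triangles whose third vertex is respectively $\Lotimes VB$, $\Rhom VB$, and $\Rhom BU$. Each of these vanishes by Fact~\ref{cor130528a} using part~(a) together with the hypothesis $\supp_R(V)\subseteq\VE(\fa)$ or $\cosupp_R(U)\subseteq\VE(\fa)$.

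For the key step, note that since $M$ is $\fa$-adically finite we have $\Lotimes KM\in\catdfb(R)\subseteq\catdf_+(R)$. The self-dual nature~\ref{disc151211a} of $K$ combined with Hom-tensor adjunction gives a natural isomorphism
$$\Lotimes K{\Rhom MX}\simeq\shift^n\Rhom K{\Rhom MX}\simeq\shift^n\Rhom{\Lotimes KM}X$$
for any $X\in\catd(R)$. Moreover, since $K$ is a bounded complex of finitely generated free $R$-modules, $\Lotimes K-$ commutes with arbitrary direct sums. Assembling these facts into a commutative square with $\Lotimes Kq$ along the top, the bottom edge becomes the natural morphism
$$\bigoplus_\lambda\Rhom{\Lotimes KM}{N_\lambda}\to\Rhom{\Lotimes KM}{\textstyle\bigoplus_\lambda N_\lambda},$$
which is an isomorphism by Lemma~\ref{lem151126c} applied to $\Lotimes KM\in\catdf_+(R)$ and the family $\{N_\lambda\}$, whose sup-bound $\sup(N_\lambda)\leq s$ is precisely what that lemma requires. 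Hence $\Lotimes Kq$ is an isomorphism.

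There is no genuine obstacle here; the argument is entirely parallel to the proof of Theorem~\ref{lem151126b}, with Lemma~\ref{lem151126c} playing the role of Lemma~\ref{lem151126a}. The only minor bookkeeping item is verifying that the diagram indicated above genuinely commutes in $\catd(R)$, which is routine since each of its edges is a standard natural transformation (adjunction, self-duality of $K$, and commuting tensor past a coproduct).
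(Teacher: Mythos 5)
Your proposal is correct and follows exactly the route the paper intends: the paper's proof of Theorem~\ref{lem151126d} is simply the remark that it ``follows similarly from Lemma~\ref{lem151126c},'' i.e., reduce to showing $\Lotimes Kq$ is an isomorphism via the self-dual nature~\ref{disc151211a} of $K$ and adjunction, apply Lemma~\ref{lem151126c} to $\Lotimes KM\in\catdfb(R)$, and then deduce (a) and (b) from Fact~\ref{cor130528a} as in Theorems~\ref{lem150525a} and~\ref{lem151126b}. You also correctly note that the ``$p$'' in part (b) of the statement should be read as $q$.
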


The case $M=\RG aR$ is useful for subsequent results, so we document it next. 

\begin{cor}\label{lem150612a}
Let $U,V,Z\in\catd(R)$ such that $\cosupp_R(U),\supp_R(V)\subseteq\VE(\fa)$.
\begin{enumerate}[\rm(a)]
\item\label{lem150612a1}
If $Y\in\catd_-(R)$, then there are natural isomorphisms in $\catd(R)$
\begin{gather*}
\Lotimes V{\Lotimes{\LL aY}Z}\xra\simeq\Lotimes V{\LL a{\Lotimes YZ}}\\
\Rhom V{\Lotimes{\LL aY}Z}\xra\simeq\Rhom V{\LL a{\Lotimes YZ}}\\
\Rhom {\LL a{\Lotimes YZ}}U
\xra\simeq
\Rhom {\Lotimes{\LL aY}Z}U.
\end{gather*}
\item\label{lem150612a2}
If $Y\in\catdb(R)$, then there are natural isomorphisms in $\catd(R)$
\begin{gather*}
\Lotimes V{\RG a{\Rhom YZ}}\xra\simeq\Lotimes V{\Rhom {\LL aY}{Z}}\\
\Rhom V{\RG a{\Rhom YZ}}\xra\simeq\Rhom V{\Rhom {\LL aY}{Z}}\\
\Rhom {\Rhom {\LL aY}{Z}}U\xra\simeq\Rhom {\RG a{\Rhom YZ}}U.
\end{gather*}
\item\label{lem150612a4}
If $Z\in\catdb(R)$ or both $Z\in\catd_-(R)$ and $\pd_Y(R)<\infty$, then there are natural isomorphisms in $\catd(R)$
\begin{gather*}
\Lotimes V{\RG a{\Rhom YZ}}\xra\simeq\Lotimes V{\Rhom {Y}{\RG aZ}}\\
\Rhom V{\RG a{\Rhom YZ}}\xra\simeq\Rhom V{\Rhom {Y}{\RG aZ}}\\
\Rhom {\Rhom {Y}{\RG aZ}}U\xra\simeq\Rhom {\RG a{\Rhom YZ}}U.
\end{gather*}
\item\label{lem150612a3}
Let $R\to S$ be a ring homomorphism.
If $Y\in\catd_-(R)$, then there are natural isomorphisms in $\catd(S)$
\begin{gather*}
\Lotimes V{\Lotimes{\LL aY}S}\xra\simeq\Lotimes V{\mathbf{L}\Lambda^{\fa S}(\Lotimes YS)}\\
\Rhom V{\Lotimes{\LL aY}S}\xra\simeq\Rhom V{\mathbf{L}\Lambda^{\fa S}(\Lotimes YS)}\\
\Rhom {\mathbf{L}\Lambda^{\fa S}(\Lotimes YS)}U\xra\simeq\Rhom {\Lotimes{\LL aY}S}U.
\end{gather*}
\item\label{lem150612a5}
Given a set $\{N_\lambda\}_{\lambda\in\Lambda}\in\catd_+(R)$ 
such that $\inf(N_\lambda)\geq s$ for all $\lambda\in\Lambda$,
there are natural isomorphisms in $\catd(R)$
\begin{gather*}
\Lotimes V{\PRG a{\prod_\lambda N_\lambda}}\xra\simeq\Lotimes V{\prod_\lambda\RG a{N_\lambda}}\\
\PRhom V{\PRG a{\prod_\lambda N_\lambda}}\xra\simeq\PRhom V{\prod_\lambda\RG a{N_\lambda}}\\
\PRhom {\prod_\lambda\RG a{N_\lambda}}U\xra\simeq\PRhom {\PRG a{\prod_\lambda N_\lambda}}U.
\end{gather*}
\item\label{lem150612a6}
Given a set $\{N_\lambda\}_{\lambda\in\Lambda}\in\catd_+(R)$ 
such that $\inf(N_\lambda)\geq s$ for all $\lambda\in\Lambda$,
there are natural isomorphisms in $\catd(R)$
\begin{gather*}
\Lotimes V{\bigoplus_\lambda\LL a{N_\lambda}}\xra\simeq\Lotimes V{\PLL a{\bigoplus_\lambda N_\lambda}}\\
\PRhom V{\bigoplus_\lambda\LL a{N_\lambda}}\xra\simeq\PRhom V{\PLL a{\bigoplus_\lambda N_\lambda}}\\
\PRhom {\PLL a{\bigoplus_\lambda N_\lambda}}U\xra\simeq\PRhom {\bigoplus_\lambda\LL a{N_\lambda}}U.
\end{gather*}
\end{enumerate}
\end{cor}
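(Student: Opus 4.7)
The unifying strategy is to apply the results of this section with the single choice $M := \RG aR$. By Example~\ref{ex160206a}\eqref{ex160206a2} this $M$ is $\fa$-adically finite, and its \v Cech complex representative on $\x$ shows that $\fd_R(M) \leq n < \infty$. Fact~\ref{fact130619b} then supplies functorial isomorphisms $\RG a- \simeq \Lotimes M-$ and $\LL a- \simeq \Rhom M-$, so each of the six natural morphisms in the statement can be recognized as an instance of a standard natural morphism already treated earlier in this section, with $M = \RG aR$ playing the role of the $\fa$-adically finite complex.

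Under these identifications, part~\eqref{lem150612a1} is the tensor-evaluation morphism $\omega_{MYZ}$ of Theorem~\ref{lem150525a}, with the hypothesis $\fd_R(M)<\infty$ automatic; part~\eqref{lem150612a2} is the Hom-evaluation morphism $\theta_{MYZ}$ of Theorem~\ref{lem150525b}, likewise; part~\eqref{lem150612a4} is the companion tensor-evaluation $\omega_{YZM}$ of Theorem~\ref{lem151115a}, whose two alternative hypotheses correspond exactly to the two alternatives on $Y$ and $Z$ listed in~\eqref{lem150612a4}; part~\eqref{lem150612a5} is the natural product-commutation morphism $p$ of Theorem~\ref{lem151126b}; and part~\eqref{lem150612a6} is its dual, the morphism $q$ of Theorem~\ref{lem151126d}, after matching the $\bigoplus$/$\prod$ duality with the $\Rhom M-$ presentation of $\LL a-$.

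For the remaining case, part~\eqref{lem150612a3}, I would invoke Corollary~\ref{lem151114a} with $M = \RG aR$. The one additional step is to identify the target: $\Lotimes MS$ is represented by the \v Cech complex of $\x$ viewed over $S$, and is therefore quasiisomorphic to $\mathbf{R}\Gamma_{\fa S}(S)$ as an $S$-complex, so Fact~\ref{fact130619b} applied over $S$ converts $\Rhom[S]{\Lotimes MS}{\Lotimes YS}$ into $\mathbf{L}\Lambda^{\fa S}(\Lotimes YS)$, as required; the hypothesis $\fd_R(M)<\infty$ of Corollary~\ref{lem151114a} is again automatic.

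The principal obstacle is purely bookkeeping: verifying that each natural morphism named in the corollary coincides, after the Fact~\ref{fact130619b} identifications (and the \v Cech identification of $\mathbf{R}\Gamma_{\fa S}(S)$ for part~\eqref{lem150612a3}), with the particular natural morphism handled by the invoked theorem, rather than some other map with the same source and target. Once these identifications are made, each isomorphism claim is immediate from the corresponding theorem applied to the chosen $M = \RG aR$.
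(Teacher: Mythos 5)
Your proposal is correct and matches the paper's proof: the paper likewise sets $M=\RG aR$, notes it is $\fa$-adically finite (Example~\ref{ex160206a}) with finite flat dimension via the \v Cech complex, and then cites Fact~\ref{fact130619b} together with the preceding results of the section---your part-by-part matching (Theorems~\ref{lem150525a}, \ref{lem150525b}, \ref{lem151115a}, Corollary~\ref{lem151114a}, and Theorems~\ref{lem151126b}, \ref{lem151126d}), including the \v Cech identification $\Lotimes MS\simeq\mathbf{R}\Gamma_{\fa S}(S)$ for part~\eqref{lem150612a3}, is exactly what that terse citation means. The only wrinkle, inherited from the paper rather than introduced by you, is that part~\eqref{lem150612a6} as stated assumes $N_\lambda\in\catd_+(R)$ with $\inf(N_\lambda)\geq s$ whereas Theorem~\ref{lem151126d} is proved for $N_\lambda\in\catd_-(R)$ with $\sup(N_\lambda)\leq s$, so the reduction for that part requires the corresponding boundedness hypothesis.
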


\begin{proof}
The complex $M=\RG aR$ is $\fa$-adically finite by Example~\ref{ex160206a}\eqref{ex160206a2}
and has finite flat dimension via the \v Cech complex.
Thus, the result follows from Fact~\ref{fact130619b} and the preceding results.
\end{proof}

\section{Transfer of Support, Co-support, and Finiteness} \label{sec151008a}

In this section, we consider various transfer relations along a ring homomorphism.

\begin{notn}\label{notn151011a}
Throughout this section, let $\vf\colon R\to S$ be a ring homomorphism such that $\fa S\neq S$,
and consider the forgetful functor $Q\colon\catd(S)\to\catd(R)$.
\end{notn}

\subsection*{Restriction of Scalars}

\newcommand{\LLS}[2]{\mathbf{L}\Lambda^{\ideal{#1} S}(#2)}
\newcommand{\RGS}[2]{\mathbf{R}\Gamma_{\ideal{#1} S}(#2)}

We begin this subsection with two useful isomorphisms.

\begin{lem}\label{lem151127a}
Given an $S$-complex $Y\in\catd(S)$, there are natural isomorphisms
\begin{align*}
Q(\LLS aY)\simeq \LL a{Q(Y)} &&
Q(\RGS aY)\simeq \RG a{Q(Y)}.
\end{align*}
\end{lem}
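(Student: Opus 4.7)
The plan is to deduce both isomorphisms from Fact~\ref{fact130619b} using the compatibility of the \v Cech complex with base change. The key auxiliary input is the isomorphism $\RGS{a}{S}\simeq\Lotimes{\RG a R}{S}$ in $\catd(S)$. Indeed, since $\x=x_1,\ldots,x_n$ generates $\fa$, the sequence $\vf(\x)$ generates $\fa S$, and the \v Cech complex $\cech{\x}$ is a bounded complex of flat $R$-modules representing $\RG a R$; the identification $\cech{\x}\otimes_R S\cong\cech{\vf(\x)}$ of complexes of flat $S$-modules (with the right-hand side representing $\RGS a S$) then delivers the displayed isomorphism. Moreover, the flatness of the entries of $\cech{\x}$ over $R$ ensures that $\Lotimes{\RG a R}{-}$ is computed by the underived $\cech{\x}\otimes_R-$, which is what lets the forgetful functor $Q$ pass through cleanly.

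For the torsion statement, I would chain together Fact~\ref{fact130619b}, the base-change isomorphism, and associativity of derived tensor:
$$\RGS a Y\simeq\Lotimes[S]{\RGS a S}{Y}\simeq\Lotimes[S]{\bigl(\Lotimes{\RG a R}{S}\bigr)}{Y}\simeq\Lotimes{\RG a R}{Y}.$$
Applying $Q$ and noting that $\Lotimes{\RG a R}{-}$ depends only on the underlying $R$-complex structure gives $Q(\RGS a Y)\simeq\Lotimes{\RG a R}{Q(Y)}\simeq\RG a{Q(Y)}$ by a second use of Fact~\ref{fact130619b}. I would mirror this argument for the completion statement using the Hom-version of Fact~\ref{fact130619b} together with Hom-tensor adjointness along $\vf$:
$$\LLS a Y\simeq\Rhom[S]{\RGS a S}{Y}\simeq\Rhom[S]{\Lotimes{\RG a R}{S}}{Y}$$
which, after applying $Q$, becomes $\Rhom{\RG a R}{Q(Y)}\simeq\LL a{Q(Y)}$ by the adjunction and a final use of Fact~\ref{fact130619b}.

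The only real obstacle is careful bookkeeping: tracking which ring each tensor or Hom is taken over, and confirming that $Q$ commutes with the intermediate constructions. This is essentially routine precisely because $\cech{\x}$ is a bounded complex of flat $R$-modules, which sidesteps the subtlety that a semi-flat (resp.\ semi-injective) $S$-resolution of $Y$ need not restrict to a semi-flat (resp.\ semi-injective) $R$-resolution of $Q(Y)$.
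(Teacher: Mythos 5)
Your proposal is correct and follows essentially the same route as the paper: both reduce to Fact~\ref{fact130619b} together with the \v Cech-complex identification $\RGS aS\simeq\Lotimes S{\RG aR}$, and then finish with Hom-tensor adjointness (resp.\ tensor associativity) and cancellation so that the forgetful functor passes through. The only cosmetic difference is that the paper writes out the $\LLno a$ case and calls the $\RGno a$ case ``similar,'' whereas you display the $\RGno a$ chain and sketch the $\LLno a$ one.
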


\begin{proof}
In the following computation, the first and last isomorphisms are from Fact~\ref{fact130619b}, and the
second one comes from \v Cech complexes.
\begin{align*}
Q(\LLS aY)
&\simeq Q(\Rhom[S]{\RGS aS}Y) \\
&\simeq Q(\Rhom[S]{\Lotimes S{\RG aR}}Y) \\
&\simeq Q(\Rhom{\RG aR}{\Rhom[S]SY}) \\
&\simeq \Rhom{\RG aR}{Q(Y)} \\
&\simeq \LL a{Q(Y)}
\end{align*}
The other isomorphisms are from adjointness and cancellation. The explains the first isomorphism, and the second one
is verified similarly.
\end{proof}

\begin{lem}\label{lem151008a}
Given an $S$-complex $Y\in\catd(S)$, one has $\supp_S(Y)\subseteq\VE(\fa S)$ if and only if $\supp_R(Q(Y))\subseteq\VE(\fa)$.
\end{lem}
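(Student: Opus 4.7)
The plan is to reduce both sides of the biconditional to a statement about the natural augmentation from derived local cohomology, and then transfer between these via the restriction of scalars functor $Q$.

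First, by Fact~\ref{cor130528a} applied over $S$ with the ideal $\fa S$, the condition $\supp_S(Y)\subseteq\VE(\fa S)$ is equivalent to the natural morphism $\RGS a Y\to Y$ being an isomorphism in $\catd(S)$. Applying Fact~\ref{cor130528a} instead over $R$ with the ideal $\fa$, the condition $\supp_R(Q(Y))\subseteq\VE(\fa)$ is equivalent to the natural morphism $\RG a{Q(Y)}\to Q(Y)$ being an isomorphism in $\catd(R)$.

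Second, the restriction of scalars $Q$ is exact and reflects isomorphisms, since a morphism of $S$-complexes is a quasiisomorphism exactly when the underlying morphism of $R$-complexes is one. Hence $\RGS a Y\to Y$ is an isomorphism in $\catd(S)$ if and only if $Q(\RGS a Y)\to Q(Y)$ is an isomorphism in $\catd(R)$.

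Third, by Lemma~\ref{lem151127a} there is a natural isomorphism $Q(\RGS a Y)\simeq\RG a{Q(Y)}$. The main obstacle is to verify that this isomorphism is compatible with the augmentations to $Q(Y)$. Tracing through the chain of identifications used in the proof of Lemma~\ref{lem151127a} --- the computation $\RGS a S\simeq\Lotimes S{\RG a R}$ via \v Cech complexes, followed by Hom-tensor adjunction and tensor cancellation --- each step preserves the canonical map to $S$, so the induced morphism $Q(\RGS a Y)\to Q(Y)$ matches $\RG a{Q(Y)}\to Q(Y)$ under the identification. Combining this compatibility with the first two steps yields the desired equivalence.
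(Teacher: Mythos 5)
Your proof is correct and follows essentially the same route as the paper's primary argument: both reduce the support conditions to the isomorphism of the natural augmentation $\RGno a \to \id$ via Fact~\ref{cor130528a}, and both invoke Lemma~\ref{lem151127a} together with the fact that $Q$ reflects isomorphisms, with attention to the compatibility of the augmentations under the identification $Q(\RGno{a S}(Y))\simeq\RG a{Q(Y)}$. For what it is worth, the paper also records a shorter alternate proof using the torsion characterization from Fact~\ref{cor130528a}: $\supp_S(Y)\subseteq\VE(\fa S)$ iff each $\HH_i(Y)$ is $\fa S$-torsion, and since $\HH_i(Q(Y))\cong\HH_i(Y)$ as abelian groups with the $R$-action restricted along $\vf$, a module is $\fa$-torsion iff it is $\fa S$-torsion, which bypasses the augmentation-compatibility issue entirely.
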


\begin{proof}
From Lemma~\ref{lem151127a}, the natural morphism
$\RG a{Q(Y)}\to Q(Y)$ is equivalent to the induced morphism $Q(\RGS aY)\to Q(Y)$.
In particular, this says that the morphism
$\RG a{Q(Y)}\to Q(Y)$ is an isomorphism in $\catd(R)$ if and only if $Q(\RGS aY)\to Q(Y)$ is an isomorphism in $\catd(R)$,
that is, if and only if the morphism $\RGS aY\to Y$ is an isomorphism in $\catd(S)$.
Two applications of Fact~\ref{cor130528a} then give the desired result. 

Alternately, we know that $\supp_S(Y)\subseteq\VE(\fa S)$ if and only if each homology module $\HH_i(Y)$ is $\fa S$-torsion, and similarly for $Q(Y)$;
see Fact~\ref{cor130528a}.
Since we have $\HH_i(Q(Y))\cong\HH_i(Y)$, these are $\fa$-torsion if and only if they are $\fa S$-torsion, so the result follows.
\end{proof}

\begin{lem}\label{lem151008b}
Given an $S$-complex $Y\in\catd(S)$, 
one has $\cosupp_S(Y)\subseteq\VE(\fa S)$ if and only if $\cosupp_R(Q(Y))\subseteq\VE(\fa)$.
\end{lem}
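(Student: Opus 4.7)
The plan is to mimic the proof of Lemma~\ref{lem151008a}, replacing the derived torsion functor with the derived completion functor and the support criterion with its cosupport analogue from Fact~\ref{cor130528a}. Recall from that fact that $\cosupp_R(X)\subseteq\VE(\fa)$ if and only if the natural morphism $\toLL aX\colon X\to\LL aX$ is an isomorphism in $\catd(R)$, and similarly $\cosupp_S(Y)\subseteq\VE(\fa S)$ if and only if $Y\to\LLS aY$ is an isomorphism in $\catd(S)$. So the task reduces to translating between these two completion morphisms through the forgetful functor $Q$.

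The key tool is the first isomorphism supplied by Lemma~\ref{lem151127a}, namely $Q(\LLS aY)\simeq \LL a{Q(Y)}$. First I would note that this isomorphism is natural in $Y$, so that applying $Q$ to the canonical morphism $Y\to\LLS aY$ produces (up to this natural isomorphism) the canonical morphism $Q(Y)\to\LL a{Q(Y)}$. Then I would use the standard fact that $Q$ reflects and preserves isomorphisms: a morphism $f$ in $\catd(S)$ is an isomorphism if and only if $Q(f)$ is an isomorphism in $\catd(R)$, since both conditions amount to the cone having vanishing homology as an abelian group.

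Combining these, the chain of equivalences reads: $\cosupp_S(Y)\subseteq\VE(\fa S)$ iff $Y\to\LLS aY$ is an isomorphism in $\catd(S)$, iff $Q(Y)\to Q(\LLS aY)$ is an isomorphism in $\catd(R)$, iff $Q(Y)\to\LL a{Q(Y)}$ is an isomorphism in $\catd(R)$ via the natural identification from Lemma~\ref{lem151127a}, iff $\cosupp_R(Q(Y))\subseteq\VE(\fa)$ by Fact~\ref{cor130528a}.

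The proof is essentially formal once Lemma~\ref{lem151127a} is in hand; the only genuine subtlety (and the one place warranting explicit mention) is the naturality of the isomorphism $Q(\LLS aY)\simeq \LL a{Q(Y)}$, ensuring it actually intertwines the two completion morphisms rather than being some unrelated isomorphism. In contrast to Lemma~\ref{lem151008a}, there is no convenient ``alternate'' argument at the level of homology modules, because cosupport is not detected by a torsion condition on individual $\HH_i$; the completion characterisation is the natural route.
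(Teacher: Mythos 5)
Your argument is exactly the paper's: the paper proves this lemma by "arguing as in the first paragraph of the proof of Lemma~\ref{lem151008a}," i.e., using the natural isomorphism $Q(\LLS aY)\simeq\LL a{Q(Y)}$ from Lemma~\ref{lem151127a} to identify the two completion morphisms and then invoking the cosupport criterion of Fact~\ref{cor130528a} twice. Your explicit attention to naturality and to $Q$ reflecting isomorphisms is sound and matches the intended reasoning, so the proposal is correct.
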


\begin{proof}
Argue as in the first paragraph of the proof of the previous result.
\end{proof}

\begin{thm}\label{lem151009a}
Let $Y\in\catd(S)$ be given. 
If $Q(Y)$ is $\fa$-adically finite over $R$, then $Y$ is $\fa S$-adically finite over $S$;
the converse holds when the induced map $\ol\vf\colon R/\fa\to S/\fa S$ is module finite, e.g., when $\vf$ is module finite or $S=\Comp Ra$.
\end{thm}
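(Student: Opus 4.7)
The plan is to use the Koszul-complex characterization of adic finiteness from Fact~\ref{thm130612a}\eqref{cor130612a1}, together with Lemma~\ref{lem151008a} for the support condition. Let $\y=y_1,\ldots,y_n$ be a generating sequence for $\fa$, so that $\vf(\y)$ generates $\fa S$; since the Koszul complex behaves well under base change, we have
$$K^S(\vf(\y))\simeq\Lotimes[R]{K^R(\y)}{S}$$
and consequently, for any $Y\in\catd(S)$,
$$Q\bigl(\Lotimes[S]{K^S(\vf(\y))}{Y}\bigr)\simeq\Lotimes[R]{K^R(\y)}{Q(Y)}.$$
Also note that $\HH_i(Q(Y))\cong\HH_i(Y)$ as abelian groups, so boundedness of $Y$ over $S$ and boundedness of $Q(Y)$ over $R$ are equivalent, and by Lemma~\ref{lem151008a} the support conditions $\supp_S(Y)\subseteq\VE(\fa S)$ and $\supp_R(Q(Y))\subseteq\VE(\fa)$ are equivalent as well.

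For the forward direction, assume $Q(Y)$ is $\fa$-adically finite. Then $\Lotimes[R]{K^R(\y)}{Q(Y)}\in\catdfb(R)$, so by the displayed isomorphism each $\HH_i(\Lotimes[S]{K^S(\vf(\y))}{Y})$ is finitely generated as an $R$-module (via $\vf$). Since any $R$-generating set is also an $S$-generating set, these modules are finitely generated over $S$, so $\Lotimes[S]{K^S(\vf(\y))}{Y}\in\catdfb(S)$. Combining this with boundedness of $Y$ and the support condition yields that $Y$ is $\fa S$-adically finite.

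For the converse, assume $Y$ is $\fa S$-adically finite and that $\ol\vf\colon R/\fa\to S/\fa S$ is module finite. The standard null-homotopy argument shows that multiplication by each $\vf(y_j)$ on $K^S(\vf(\y))$ is null-homotopic, hence every homology module $H_i:=\HH_i(\Lotimes[S]{K^S(\vf(\y))}{Y})$ is annihilated by $\fa S$ and is thus naturally an $S/\fa S$-module. By assumption each $H_i$ is finitely generated over $S$, and therefore over $S/\fa S$; since $\ol\vf$ is module finite, $S/\fa S$ is finitely generated over $R/\fa$, so each $H_i$ is finitely generated over $R/\fa$, hence over $R$. The displayed isomorphism then places $\Lotimes[R]{K^R(\y)}{Q(Y)}$ in $\catdfb(R)$, and together with the support and boundedness transfer this shows $Q(Y)$ is $\fa$-adically finite. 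Finally, $\vf$ module finite clearly makes $\ol\vf$ module finite, and for $S=\Comp Ra$ one has $\Comp Ra/\fa\Comp Ra\cong R/\fa$, so $\ol\vf$ is an isomorphism.

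The only real subtlety is the transfer of finite generation of Koszul homology between $R$ and $S$; the forward direction is automatic (fewer generators on $S$), while the converse requires the module-finiteness of $\ol\vf$ precisely to pass from $S/\fa S$-finite generation back to $R/\fa$-finite generation, which I expect is the key point and the reason the converse requires the extra hypothesis.
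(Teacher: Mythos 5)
Your proof is correct and follows essentially the same approach as the paper: both use the base-change isomorphism $K^S \simeq \Lotimes SK$ together with $\Lotimes[S]{K^S}{Y}\simeq\Lotimes{K}{Q(Y)}$, invoke Lemma~\ref{lem151008a} for the support transfer, pass from $R$-finite generation to $S$-finite generation in the forward direction, and use the $S/\fa S$-module structure on Koszul homology plus module-finiteness of $\ol\vf$ to descend finite generation to $R/\fa$ in the converse.
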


\begin{proof}
It is clear that we have $Q(Y)\in\catdb(R)$ if and only if $Y\in\catdb(S)$,
and we have $\supp_S(Y)\subseteq\VE(\fa S)$ if and only if $\supp_R(Q(Y))\subseteq\VE(\fa)$ by Lemma~\ref{lem151008a}.
Note that $K':=\Lotimes SK$ is the Koszul complex over $S$ on a finite generating sequence for $\fa S$.
Consider the following isomorphisms:
$$
\Lotimes[S]{K'}{Y}
\simeq\Lotimes[S]{(\Lotimes SK)}{Y}
\simeq\Lotimes K{Q(Y)}.
$$
If each homology module $\HH_i(\Lotimes K{Q(Y)})$ is finitely generated over $R$, then the above isomorphisms imply that $\HH_i(\Lotimes[S]{K'}{Y})$
is finitely generated over $R$, hence over $S$. Thus, if $Q(Y)$ is $\fa$-adically finite over $R$, then $Y$ is $\fa S$-adically finite over $S$, by definition.

For the converse, assume that $Y$ is $\fa S$-adically finite over $S$ and that the induced map $\ol\vf$ is module finite.
Since each module $\HH_i(\Lotimes[S]{K'}{Y})$ is finitely generated over $S$ and has a natural $S/\fa S$-module structure, it is finitely generated over $S/\fa S$.
Our finiteness assumption on $S/\fa S$ conspires with the above isomorphisms to imply that each module $\HH_i(\Lotimes K{Q(Y)})$ is finitely generated over $R/\fa$,
hence over $R$, so 
$Q(Y)$ is $\fa$-adically finite over $R$.
\end{proof}

The next result 
explains the relation between the condition $\vf^*(\supp_S(F))\supseteq\VE(\fa)\bigcap\mspec(R)$ 
from~\cite[Theorem~4.1]{sather:afbha}
and the seemingly more natural condition $\supp_R(F)\supseteq\VE(\fa)\bigcap\mspec(R)$.
Here $\vf^*$ is the induced map $\spec(S)\to\spec(R)$.
These ideas will be explored further in~\cite{sather:frfdgm}.

\begin{prop}\label{prop151120a}
Let $Y\in\catd(S)$.
\begin{enumerate}[\rm(a)]
\item\label{prop151120a1}
One has $\vf^*(\spec(S))=\supp_R(S)$.
\item\label{prop151120a2}
One has $\supp_R(Y)\subseteq
\vf^*(\spec(S))$.
\item\label{prop151120a3}
If $\supp_R(Y)\supseteq\VE(\fa)\bigcap\mspec(R)$, then
$\vf^*(\supp_S(Y))\supseteq\VE(\fa)\bigcap\mspec(R)$.
\end{enumerate}
\end{prop}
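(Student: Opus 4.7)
The plan is to reduce all three parts to a single preliminary claim: for every $\fq\in\spec(S)$, one has $\supp_R(\kappa(\fq))=\{\vf^{-1}(\fq)\}$. To prove this, set $\fp':=\vf^{-1}(\fq)$ and observe that the $R$-action on $\kappa(\fq)$ factors through the field map $\kappa(\fp')\into\kappa(\fq)$, so $\kappa(\fq)$ is a nonzero (and thus flat) $\kappa(\fp')$-vector space. A standard semi-flat resolution argument then gives a change-of-rings isomorphism
$$\Lotimes[R]{\kappa(\fp)}{\kappa(\fq)}\simeq\Otimes[\kappa(\fp')]{(\Lotimes[R]{\kappa(\fp)}{\kappa(\fp')})}{\kappa(\fq)}$$
for every $\fp\in\spec(R)$, and the $\kappa(\fp')$-flatness of $\kappa(\fq)$ shows that this vanishes precisely when $\Lotimes[R]{\kappa(\fp)}{\kappa(\fp')}\simeq 0$, that is, when $\fp\neq\fp'$; here I invoke the standard identity $\supp_R(\kappa(\fp'))=\{\fp'\}$ for residue fields.

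For part~\eqref{prop151120a1}, the inclusion $\vf^*(\spec(S))\subseteq\supp_R(S)$ is direct: if $\fp=\vf^{-1}(\fq)$, then the fiber ring $\kappa(\fp)\otimes_R S$ contains a prime corresponding to $\fq$ and is therefore nonzero, so $\HH_0(\Lotimes{\kappa(\fp)}{S})\neq 0$. For the reverse, let $\fp\in\supp_R(S)$ and view the nonzero complex $\Lotimes[R]{\kappa(\fp)}{S}$ as an object of $\catd(S)$; Fact~\ref{cor130528a} applied over $S$ then produces some $\fq\in\spec(S)$ with $\Lotimes[S]{\kappa(\fq)}{(\Lotimes[R]{\kappa(\fp)}{S})}\not\simeq 0$, and a standard cancellation identifies this complex with $\Lotimes[R]{\kappa(\fp)}{\kappa(\fq)}$. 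The preliminary claim forces $\fp=\vf^{-1}(\fq)\in\vf^*(\spec(S))$.

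Parts~\eqref{prop151120a2} and~\eqref{prop151120a3} both rely on the base-change isomorphism
$$\Lotimes[R]{\kappa(\fp)}{Y}\simeq\Lotimes[S]{(\Lotimes[R]{\kappa(\fp)}{S})}{Y}\qquad(Y\in\catd(S)),$$
obtained from any $R$-semi-flat resolution $F\xra\simeq\kappa(\fp)$ by noting that $F\otimes_R S$ is $S$-semi-flat. For~\eqref{prop151120a2}, if $\fp\notin\vf^*(\spec(S))=\supp_R(S)$ then $\Lotimes[R]{\kappa(\fp)}{S}\simeq 0$ in $\catd(S)$, so the right side (hence the left) vanishes, and $\fp\notin\supp_R(Y)$. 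For~\eqref{prop151120a3}, fix $\fm\in\VE(\fa)\cap\mspec(R)\subseteq\supp_R(Y)$; combining the base-change isomorphism with the multiplicativity of $S$-support (Fact~\ref{cor130528a}) yields
$$\supp_S(\Lotimes[R]{\kappa(\fm)}{Y})=\supp_S(\Lotimes[R]{\kappa(\fm)}{S})\cap\supp_S(Y),$$
and the left side is nonempty by the hypothesis on $\fm$. As in~\eqref{prop151120a1}, the preliminary claim identifies $\supp_S(\Lotimes[R]{\kappa(\fm)}{S})$ with the fiber $(\vf^*)^{-1}(\fm)$, so this fiber meets $\supp_S(Y)$, giving $\fm\in\vf^*(\supp_S(Y))$. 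The main obstacle is the preliminary residue-field computation; once it is in hand, the three parts follow mechanically from the tensor-product formula for small support.
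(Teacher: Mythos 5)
Your proof is correct, and it takes a genuinely more unified route than the paper. The paper proves part~\eqref{prop151120a1} directly at the level of the fiber ring $\kappa(\p)\otimes_R S$, identifying $\HH_i(\Lotimes{\kappa(\p)}{S})$ as a nonzero module over that ring to produce a prime; for part~\eqref{prop151120a3} it introduces a Koszul complex $L=K^R(\y)$ on a generating sequence of $\m$, passes to $L':=\Lotimes SL\cong K^S(\y)$, and uses $\supp_S(\Lotimes[S]Y{L'})=\supp_S(Y)\bigcap\VE(\m S)$. You replace both ad-hoc arguments with the single residue-field computation $\supp_R(\kappa(\fq))=\{\vf^{-1}(\fq)\}$, then reduce~\eqref{prop151120a1},~\eqref{prop151120a2}, and~\eqref{prop151120a3} mechanically to it via base change and the multiplicativity of small support over $S$. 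In particular, in~\eqref{prop151120a3} your identification $\supp_S(\Lotimes{\kappa(\m)}{S})=(\vf^*)^{-1}(\m)$ does the same work the paper extracts from the containment $\supp_S(\Lotimes[S]Y{L'})\subseteq\VE(\m S)$, without the Koszul detour. What your approach buys is economy and conceptual uniformity; what the paper's approach buys is a more hands-on argument in~\eqref{prop151120a1} (prime ideals of the fiber ring, no auxiliary lemma) and thematic consistency with the rest of the article's Koszul-heavy machinery. One small point of phrasing: in the preliminary claim you attribute the ``vanishes precisely when'' step to flatness of $\kappa(\fq)$ over $\kappa(\fp')$, but the nontrivial direction needs \emph{faithful} flatness; since you note that $\kappa(\fq)$ is a nonzero vector space over a field, this is automatic, so the argument is sound --- the attribution is just slightly imprecise.
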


\begin{proof}
Let $\p\in\spec(R)$, and set $S_\p:=\Otimes{R_\p}{S}$.

\eqref{prop151120a1}
For one implication, assume that $\p\in\vf^*(\spec(S))$, and let $P\in\spec(S)$ be such that $\p=\vf^{-1}(P)$.
It follows that $P$ represents a prime ideal in the ring $S_\p/\p S_\p\cong\Otimes{\kappa(\p)}{S}$.
In particular, we have $0\neq \Otimes{\kappa(\p)}{S}\cong\HH_0(\Lotimes{\kappa(\p)}{S})$.
We conclude that $\Lotimes{\kappa(\p)}{S}\not\simeq 0$, so $\p\in\supp_R(S)$.

For the converse, assume that $\p\in\supp_R(S)$.
By definition, this implies that $\Lotimes{\kappa(\p)}{S}\not\simeq 0$, so 
there is an integer $i$ such that $\HH_i(\Lotimes{\kappa(\p)}{S})\neq 0$. 
From the isomorphism $\Lotimes{\kappa(\p)}{S}\simeq\Lotimes{(R/\p)}{S_{\p}}$,
we conclude that $\HH_i(\Lotimes{\kappa(\p)}{S})$ is a non-zero $S_{\p}$-module that is annihilated by $\p$,
that is, it is a non-zero module over $\Otimes{(R/\p)}{S_{\p}}\cong\Otimes{\kappa(\p)}{S}$.
It follows that the ring $\Otimes{\kappa(\p)}{S}$ is non-zero, so it has a prime ideal $P$,
which necessarily satisfies $\p=\vf^{-1}(P)\in\vf^*(\spec(S))$, as desired.

\eqref{prop151120a2}
Assume that $\p\in\supp_R(Y)$.
It follows that we have 
$$0\not\simeq\Lotimes{\kappa(\p)}Y\simeq\Lotimes[S]{(\Lotimes{\kappa(\p)}{S})}Y$$
so we conclude that $\Lotimes{\kappa(\p)}{S}\not\simeq 0$.
That is, by part~\eqref{prop151120a1}, we have $\p\in\supp_R(S)=\vf^*(\spec(S))$, as desired.

\eqref{prop151120a3}
Assume that $\supp_R(Y)\supseteq\VE(\fa)\bigcap\mspec(R)$, and let $\m\in\VE(\fa)\bigcap\mspec(R)$.
Let $\y$ be a finite generating sequence for $\m$, and set $L:=K^R(\y)$ and $L':=\Lotimes{S}{L}\cong K^S(\y)$.
By assumption, we have $\m\in\supp_R(Y)\bigcap\VE(\m)=\supp_R(\Lotimes YL)$, so we conclude that
$0\not\simeq\Lotimes YL\simeq\Lotimes[S]Y{L'}$.
Thus, there is a prime  $P\in\supp_S(\Lotimes[S]Y{L'})=\supp_S(Y)\bigcap\VE(\fm S)$,
so
$\vf^{-1}(P)\supseteq\vf^{-1}(\m S)\supseteq\m$. As $\m$ is maximal and $\vf^{-1}(P)$ is prime, we conclude that
$\m=\vf^{-1}(P)\in\vf^*(\supp_S(Y))$, as desired.
\end{proof}

\subsection*{Base Change and Co-base Change}
We now switch to extension of scalars.
For perspective in the results of this subsection, 
recall the characterization of $\supp_R(S)$ from Proposition~\ref{prop151120a}\eqref{prop151120a1}.

\begin{lem}\label{lem151009b}
Let $X\in\catd(R)$ be given. 
If $\supp_R(X)\subseteq\VE(\fa)$, then $\supp_S(\Lotimes SX)\subseteq\VE(\fa S)$;
the converse holds when $\supp_R(S)\supseteq\supp_R(X)$, e.g., when the map $\vf$ is faithfully flat or when it is injective and integral.
\end{lem}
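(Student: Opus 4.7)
The plan is to reduce both implications to the support formula $\supp_R(\Lotimes SX) = \supp_R(S) \cap \supp_R(X)$ from Fact~\ref{cor130528a}, combined with the restriction-of-scalars comparison from Lemma~\ref{lem151008a}. Throughout, I identify $Q(\Lotimes SX)$ with $\Lotimes SX$ viewed as an $R$-complex in the obvious way.

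For the forward implication, assume $\supp_R(X) \subseteq \VE(\fa)$. Then the support formula gives
\[
\supp_R(\Lotimes SX) = \supp_R(S) \cap \supp_R(X) \subseteq \supp_R(X) \subseteq \VE(\fa).
\]
Apply Lemma~\ref{lem151008a} to the $S$-complex $Y := \Lotimes SX$ to conclude $\supp_S(\Lotimes SX) \subseteq \VE(\fa S)$.

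For the converse, assume in addition $\supp_R(S) \supseteq \supp_R(X)$ and $\supp_S(\Lotimes SX) \subseteq \VE(\fa S)$. The other direction of Lemma~\ref{lem151008a}, applied again to $Y = \Lotimes SX$, yields $\supp_R(\Lotimes SX) \subseteq \VE(\fa)$. Invoking the support formula once more,
\[
\supp_R(X) = \supp_R(S) \cap \supp_R(X) = \supp_R(\Lotimes SX) \subseteq \VE(\fa),
\]
where the first equality uses the hypothesis $\supp_R(X) \subseteq \supp_R(S)$. There is no real obstacle here; both directions are essentially bookkeeping with the small-support toolkit developed earlier.

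It remains to justify the two explicit situations in which the containment $\supp_R(S) \supseteq \supp_R(X)$ is automatic. In both cases I claim $\supp_R(S) = \spec(R)$. By Proposition~\ref{prop151120a}\eqref{prop151120a1} this amounts to showing that $\vf^* \colon \spec(S) \to \spec(R)$ is surjective. If $\vf$ is faithfully flat, surjectivity of $\vf^*$ is standard (equivalently, $\Lotimes{\kappa(\p)}{S} \not\simeq 0$ for every $\p$ since faithful flatness preserves non-vanishing). If $\vf$ is injective and integral, lying-over for integral extensions gives the surjectivity of $\vf^*$. In either case $\supp_R(S) = \spec(R) \supseteq \supp_R(X)$, so the converse applies.
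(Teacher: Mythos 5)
Your proposal is correct and follows essentially the same route as the paper: the paper inlines the torsion-module characterization (the ``second paragraph'' argument from the proof of Lemma~\ref{lem151008a}) to pass between $\supp_S(\Lotimes SX)\subseteq\VE(\fa S)$ and $\supp_R(\Lotimes SX)\subseteq\VE(\fa)$, and then invokes the support formula $\supp_R(\Lotimes SX)=\supp_R(S)\cap\supp_R(X)$ from Fact~\ref{cor130528a}, while you simply cite Lemma~\ref{lem151008a} directly for the same equivalence. Your justification of the two special cases of $\supp_R(S)=\spec(R)$ (faithful flatness, and lying-over for injective integral maps, via Proposition~\ref{prop151120a}\eqref{prop151120a1}) is precisely the content of the paper's closing parenthetical remark, spelled out in slightly more detail.
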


\begin{proof}
As in the second paragraph of the 
proof of Lemma~\ref{lem151008a}, one has the containment 
$\supp_S(\Lotimes SX)\subseteq\VE(\fa S)$ if and only if each  module 
$\HH_i(\Lotimes SX)$ is $\fa S$-torsion, 
i.e., if and only if each homology module $\HH_i(\Lotimes SX)$ is $\fa$-torsion,
that is, if and only if $\supp_R(\Lotimes SX)\subseteq\VE(\fa)$.
Since we have $\supp_R(\Lotimes SX)=\supp_R(S)\bigcap\supp_R(X)$ by Fact~\ref{cor130528a}, the desired implications follow readily.
(When $\vf$ is faithfully flat or when it is injective and integral, we have $\supp_R(S)=\spec(R)$.)
\end{proof}

\begin{lem}\label{lem151009c}
Let $X\in\catd(R)$ be given. 
If $\VE(\fa)\supseteq\cosupp_R(X)$, then $\VE(\fa S)\supseteq\cosupp_S(\Rhom SX)$;
the converse holds when $\supp_R(S)\supseteq\cosupp_R(X)$, e.g., when the map $\vf$ is faithfully flat or when it is injective and integral.
\end{lem}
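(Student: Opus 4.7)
The plan is to mirror the structure of the proof of Lemma~\ref{lem151009b}, replacing support with cosupport and using the cosupport formula from Fact~\ref{cor130528a} together with the restriction-of-scalars identity from Lemma~\ref{lem151008b}. The key observation is that $\Rhom{S}{X}$, when viewed as an $R$-complex via the forgetful functor $Q$, has cosupport computable by
\begin{equation*}
\cosupp_R(\Rhom{S}{X}) = \supp_R(S)\cap\cosupp_R(X),
\end{equation*}
by Fact~\ref{cor130528a}.

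First I would apply Lemma~\ref{lem151008b} to the $S$-complex $Y = \Rhom{S}{X}$: the condition $\cosupp_S(\Rhom{S}{X})\subseteq\VE(\fa S)$ holds if and only if $\cosupp_R(Q(\Rhom{S}{X}))\subseteq\VE(\fa)$. Combining this equivalence with the displayed equality above reduces the entire lemma to comparing the set $\supp_R(S)\cap\cosupp_R(X)$ with $\VE(\fa)$.

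For the forward direction, assume $\cosupp_R(X)\subseteq\VE(\fa)$. Then
\begin{equation*}
\cosupp_R(\Rhom{S}{X}) = \supp_R(S)\cap\cosupp_R(X)\subseteq\cosupp_R(X)\subseteq\VE(\fa),
\end{equation*}
so Lemma~\ref{lem151008b} gives $\cosupp_S(\Rhom{S}{X})\subseteq\VE(\fa S)$. For the converse, assume $\cosupp_S(\Rhom{S}{X})\subseteq\VE(\fa S)$ and $\supp_R(S)\supseteq\cosupp_R(X)$. Then Lemma~\ref{lem151008b} gives $\supp_R(S)\cap\cosupp_R(X)\subseteq\VE(\fa)$, and the containment hypothesis forces the intersection to equal $\cosupp_R(X)$, yielding $\cosupp_R(X)\subseteq\VE(\fa)$.

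Finally, for the sample cases, I would invoke Proposition~\ref{prop151120a}\eqref{prop151120a1}, which identifies $\supp_R(S) = \vf^*(\spec(S))$. When $\vf$ is faithfully flat, $\Lotimes{\kappa(\p)}{S}\not\simeq 0$ for every $\p\in\spec(R)$, so $\supp_R(S) = \spec(R)$; when $\vf$ is injective and integral, lying-over shows $\vf^*(\spec(S)) = \spec(R)$, hence again $\supp_R(S) = \spec(R)$. In either case, the containment $\supp_R(S)\supseteq\cosupp_R(X)$ is automatic. I do not expect any serious obstacle: the entire argument is a direct bookkeeping exercise built on the formula for $\cosupp_R(\Rhom{S}{X})$ and the restriction-of-scalars lemma, with the only subtlety being to notice that the intersection $\supp_R(S)\cap\cosupp_R(X)$ is what forces the extra hypothesis in the converse.
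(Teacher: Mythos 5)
Your proof is correct, and it takes a genuinely different route from the paper's. The paper proves Lemma~\ref{lem151009c} directly and in a self-contained way: it fixes a prime $P\in\spec(S)$ lying over $\p\in\spec(R)$, establishes the chain of isomorphisms
$$\Rhom[S]{\kappa(P)}{\Rhom SX}\simeq\Rhom{\kappa(P)}{X}\simeq\Rhom[\kappa(\p)]{\kappa(P)}{\Rhom{\kappa(\p)}{X}},$$
and then uses the fact that $\kappa(P)$ is a non-zero $\kappa(\p)$-vector space to obtain the sharper pointwise statement that $\p\in\cosupp_R(X)$ if and only if $P\in\cosupp_S(\Rhom SX)$ for every $P$ over $\p$; both containments then follow from this equivalence. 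Your argument instead factors through Lemma~\ref{lem151008b} (restriction of scalars for cosupport containment) together with the formula $\cosupp_R(\Rhom SX)=\supp_R(S)\bigcap\cosupp_R(X)$ from Fact~\ref{cor130528a}, which makes the bookkeeping cleaner at the cost of invoking more of the surrounding machinery. One point worth flagging: Remark~\ref{disc151009a} cautions that the proof strategy of Lemma~\ref{lem151009b} does not transfer to Lemma~\ref{lem151009c}, but the obstruction there is the use of the homology-level torsion characterization (via~\cite[Corollary~4.32]{yekutieli:hct}), which your argument does not touch; Lemma~\ref{lem151008b} is established via the derived-functor characterization in Fact~\ref{cor130528a} (through Lemma~\ref{lem151127a}), so you steer clear of that pitfall. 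The treatment of the example cases (faithfully flat, injective and integral) matches the paper's reasoning exactly.
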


\begin{proof}
Given a prime ideal $P\in\spec(S)$ with contraction $\p\in\spec(R)$, we have the following commutative diagram of natural/induced ring homomorphisms:
$$\xymatrix{
R\ar[r]\ar[d]
&S\ar[d] \\
\kappa(\p)\ar[r]
&\kappa(P).}$$
From this, we conclude that there are isomorphisms
\begin{align*}
\Rhom[S]{\kappa(P)}{\Rhom SX}
&\simeq\Rhom{\kappa(P)}{X}\\
&\simeq\Rhom[\kappa(\p)]{\kappa(P)}{\Rhom{\kappa(\p)}{X}}.
\end{align*}
Since $\kappa(P)$ is a non-zero $\kappa(\p)$-vector space, we conclude that
$\p\in\cosupp_R(X)$ if and only if $P\in\cosupp_S(\Rhom SX)$.

Now, for one implication in the result, assume that $\cosupp_R(X)\subseteq\VE(\fa)$ and let $P\in\cosupp_S(\Rhom SX)$.
With $\p$ as above, we then have $\p\in\cosupp_R(X)\subseteq\VE(\fa)$, so we conclude that $\fa\subseteq\p$, which implies that
$\fa S\subseteq\p S\subseteq P$.
Thus, we have $P\in\VE(\fa S)$, as desired. 

Next,  assume that we have $\cosupp_S(\Rhom SX)\subseteq\VE(\fa)$ and $\supp_R(S)\supseteq\cosupp_R(X)$, and let $\p\in\cosupp_R(X)$ be given.
It follows that $\p\in\supp_R(S)=\vf^*(\spec(S))$, 
by Proposition~\ref{prop151120a}\eqref{prop151120a1},
i.e., $S$ has a prime ideal $P$ lying over $\p$.
We conclude as above that $P\in\VE(\fa S)$, so $\fa\subseteq \vf^{-1}(\fa S)\subseteq \vf^{-1}(P)=\p$, as desired.
\end{proof}

\begin{disc}\label{disc151009a}
The proof of Lemma~\ref{lem151009c} can be used to give another proof of Lemma~\ref{lem151009b}, but not vice versa.
This is due in part to the differences between~\cite[Corollary~4.32]{yekutieli:hct} and~\cite[Theorem~3]{yekutieli:sccmc}.
\end{disc}

The next result explains base-change behavior for adic finiteness.
 
\begin{thm}\label{lem151009d}
Let $X\in\catd(R)$ be given. 
If $X$ is $\fa$-adically finite over $R$ and $\Lotimes SX\in\catdb(S)$, then $\Lotimes SX$ is $\fa S$-adically finite over $S$;
the converse holds when  the map $\vf$ is flat with $\supp_R(S)\supseteq\VE(\fa)\bigcup\supp_R(X)$, e.g., when $\vf$ is faithfully flat.
\end{thm}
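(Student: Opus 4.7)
The plan is to verify, in each direction, the three defining ingredients of Definition~\ref{def120925d}---boundedness, the support containment, and the Koszul condition of Fact~\ref{thm130612a}---using a generating sequence $\y$ of $\fa$ (which also generates $\fa S$) together with the identification $K':=K^S(\y)\cong\Lotimes SK$ for $K:=K^R(\y)$. This supplies the key associativity isomorphism $\Lotimes[S]{K'}{\Lotimes SX}\simeq\Lotimes S{\Lotimes KX}$ that underpins both implications.

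For the forward implication, boundedness of $\Lotimes SX$ is given, and Lemma~\ref{lem151009b} delivers $\supp_S(\Lotimes SX)\subseteq\VE(\fa S)$. For Fact~\ref{thm130612a}\eqref{cor130612a1}, I will choose a bounded-below, degree-wise finitely generated semi-free resolution $F\xra{\simeq}\Lotimes KX$ over the noetherian ring $R$ (available since $\Lotimes KX\in\catdfb(R)$ by $\fa$-adic finiteness of $X$). Then $\Otimes SF$ represents $\Lotimes S{\Lotimes KX}\simeq\Lotimes[S]{K'}{\Lotimes SX}$ and is degree-wise finitely generated over $S$, so its homology is finitely generated in each degree. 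Combined with boundedness of $\Lotimes[S]{K'}{\Lotimes SX}$ (from $\Lotimes SX\in\catdb(S)$ and the fact that $K'$ is a bounded complex of finite free $S$-modules), this places $\Lotimes[S]{K'}{\Lotimes SX}$ in $\catdfb(S)$. No flatness is needed in this direction.

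For the converse, assume $\vf$ is flat and $\supp_R(S)\supseteq\VE(\fa)\bigcup\supp_R(X)$. The converse half of Lemma~\ref{lem151009b} immediately yields $\supp_R(X)\subseteq\VE(\fa)$. By flatness of $\vf$, the modules $M_i:=\HH_i(\Lotimes KX)$ satisfy $\Otimes SM_i\cong\HH_i(\Lotimes[S]{K'}{\Lotimes SX})$, and each $M_i$ is annihilated by $\fa$ as it is a Koszul homology module; hence $\supp_R(M_i)\subseteq\VE(\fa)\subseteq\supp_R(S)$. Using $\supp_R(\Lotimes SM_i)=\supp_R(S)\bigcap\supp_R(M_i)=\supp_R(M_i)$ and Fact~\ref{cor130528a}, any $M_i$ with $\Otimes SM_i=0$ must itself vanish. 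Applying this to the vanishing homology of the bounded complex $\Lotimes[S]{K'}{\Lotimes SX}$ gives $\Lotimes KX\in\catdb(R)$, and Lemma~\ref{lem150604a1}\eqref{lem150604a1a}---applicable by the support containment already established---promotes this to $X\in\catdb(R)$.

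The main obstacle is to descend finite generation of each $M_i$ from $S$ to $R$. For this I will verify that the induced map $R/\fa\to S/\fa S$ is faithfully flat: flatness holds by base change of $\vf$, and the assumption $\VE(\fa)\subseteq\supp_R(S)=\vf^*(\spec(S))$ (via Proposition~\ref{prop151120a}\eqref{prop151120a1}) shows every $\p\in\VE(\fa)$ lifts to a prime $P\in\spec(S)$ that necessarily contains $\fa S$, so the induced map on spectra is surjective. Each $M_i$ is naturally an $R/\fa$-module with $\Otimes[R/\fa]{(S/\fa S)}{M_i}\cong\Otimes SM_i$ finitely generated over $S/\fa S$; the standard faithfully flat descent---lift generators of $\Otimes SM_i$ to a finitely generated $R/\fa$-submodule $M_i'\subseteq M_i$, then observe $\Otimes S{(M_i/M_i')}=0$ forces $M_i=M_i'$---shows that $M_i$ is finitely generated over $R/\fa$, hence over $R$. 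This verifies Fact~\ref{thm130612a}\eqref{cor130612a1} for $X$ over $R$ and completes the proof.
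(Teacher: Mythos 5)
Your proposal is correct and takes essentially the same route as the paper's proof: the identification $K'\cong\Lotimes SK$ with the associativity isomorphism $\Lotimes[S]{K'}{(\Lotimes SX)}\simeq\Lotimes S{(\Lotimes KX)}$, Lemma~\ref{lem151009b} for the support transfer in both directions, a degree-wise finite semi-free resolution for the forward finiteness claim, and faithful flatness of $R/\fa\to S/\fa S$ to descend finite generation of the Koszul homology modules. The only cosmetic differences are that you establish faithfulness via surjectivity on spectra through Proposition~\ref{prop151120a}\eqref{prop151120a1} rather than nonvanishing of closed fibers, and you descend the vanishing of homology (for boundedness) via small support instead of faithful flatness.
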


\begin{proof}
Note that $K':=\Lotimes SK$ is the Koszul complex over $S$ on a finite generating sequence for $\fa S$.
Consider the following isomorphisms in $\catd(S)$:
$$
\Lotimes[S]{K'}{(\Lotimes SX)}
\simeq\Lotimes{K'}{X}
\simeq\Lotimes{(\Lotimes SK)}{X}
\simeq\Lotimes S{(\Lotimes KX)}.
$$

For one implication, assume that $X$ is $\fa$-adically finite over $R$ and $\Lotimes SX\in\catdb(S)$.
Lemma~\ref{lem151009b} implies that $\supp_S(\Lotimes SX)\subseteq\VE(\fa S)$.
Our finiteness assumption implies that $\Lotimes KX\in\catdfb(R)$; using a degree-wise finite semi-free resolution of $\Lotimes KX$ over $R$,
we deduce that $\Lotimes[S]{K'}{(\Lotimes SX)}\simeq\Lotimes S{(\Lotimes KX)}\in\catdf_+(S)$.
(We actually have $\Lotimes[S]{K'}{(\Lotimes SX)}\in\catdfb(S)$, but we don't need that here.)
Thus, the $S$-complex $\Lotimes SX$ is $\fa S$-adically finite. 

Conversely, assume that $\vf$ is flat with $\supp_R(S)\supseteq\VE(\fa)$.
Assume further that the $S$-complex $\Lotimes SX$ is $\fa S$-adically finite. 
Lemma~\ref{lem151009b} implies that $\supp_R(X)\subseteq\VE(\fa)$.

Claim 1. The induced map $\ol\vf$ is faithfully flat. 
Since $\vf$ is flat, so is $\ol\vf$.
To show that it is faithfully flat, let $\m/\fa\in\mspec(R/\fa)$. 
We need to show that $\Otimes[R/\fa]{(R/\fa)/(\m/\fa)}{S/\fa}\neq 0$,
i.e., that $\Otimes{(R/\m)}S\neq 0$.
By assumption, we have $\m\in\VE(\fa)\subseteq\supp_R(S)$, so flatness gives us
$\Otimes{(R/\m)}S\simeq\Lotimes{\kappa(\m)}{S}\not\simeq 0$,
as desired.

Claim 2.
Each module $\HH_i(\Lotimes KX)$ is finitely generated over $R$.
By assumption, we have $\Lotimes[S]{K'}{(\Lotimes SX)}\in\catdfb(S)$.
By flatness, we have the following
$$\HH_i(\Lotimes[S]{K'}{(\Lotimes SX)})\cong\HH_i(\Lotimes S{(\Lotimes KX)})\cong\Otimes S{\HH_i(\Lotimes KX)}$$
so $\Otimes S{\HH_i(\Lotimes KX)}$ is finitely generated over $S$.
It follows that the next module is finitely generated over $S/\fa S$.
$$
\Otimes[S]{(S/\fa S)}{(\Otimes S{\HH_i(\Lotimes KX)})}
\cong\Otimes[R/\fa]{(S/\fa S)}{(\Otimes{(R/\fa)}{\HH_i(\Lotimes KX)})}
$$
Faithful flatness implies that $\Otimes{(R/\fa)}{\HH_i(\Lotimes KX)}\cong\HH_i(\Lotimes KX)$ is finitely generated over $R/\fa$, hence over $R$.

Now we complete the proof. By assumption, we have 
$$\Lotimes S{(\Lotimes KX)}\simeq\Lotimes[S]{K'}{(\Lotimes SX)}\in\catdfb(S).$$
Claim 2 implies that $\Lotimes KX\in\catdf(R)$.
Also, as in Claim 2, one verifies that if $\HH_i(\Lotimes[S]{K'}{(\Lotimes SX)})=0$, then $\HH_i(\Lotimes KX)=0$.
Thus, we have $\Lotimes KX\in\catdb(R)$.
From Lemma~\ref{lem150604a1}\eqref{lem150604a1a},
we have $X\in\catdb(R)$, so we conclude that $X$ is $\fa$-adically finite, as desired.
\end{proof}

As an application of the previous result, we characterize complexes with artinian total homology.
Recall that an ideal $\fb$ has \emph{finite colength} if the ring $R/\fa$ has finite length, i.e., is artinian.

\begin{prop}\label{prop160208a}
Let $X\in\catdb(R)$ be given. Then each module $\HH_i(X)$ is artinian over $R$ if and only if there is an ideal $\fb$ of $R$ with finite colength such that $X$ is $\fb$-adically finite.
\end{prop}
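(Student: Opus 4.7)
The plan is to reduce both implications to the local case provided by Example~\ref{ex160206a}\eqref{ex160206a3} via localization at the finitely many relevant maximal ideals, together with the Chinese Remainder decomposition of $R/\fb$.

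For the ``only if'' direction, I would start by noting that an artinian $R$-module has support consisting of finitely many maximal ideals. Since $X \in \catdb(R)$, the set $\{\m_1, \ldots, \m_k\} := \bigcup_i \Supp_R\HH_i(X)$ is a finite collection of maximal ideals. Set $\fb := \bigcap_j \m_j$; pairwise comaximality together with CRT gives $R/\fb \cong \prod_j R/\m_j$, which is artinian, so $\fb$ has finite colength. The inclusion $\supp_R(X) \subseteq \VE(\fb) = \{\m_1, \ldots, \m_k\}$ is immediate. For the tensor-finiteness condition from Fact~\ref{thm130612a}, I would argue locally: each $\HH_i(X)_{\m_j}$ is artinian over $R_{\m_j}$, so by Example~\ref{ex160206a}\eqref{ex160206a3} the complex $X_{\m_j} := \Lotimes{R_{\m_j}}{X}$ is $\m_j R_{\m_j}$-adically finite over $R_{\m_j}$. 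Hence $\Lotimes[R_{\m_j}]{X_{\m_j}}{\kappa(\m_j)}$ has finite-dimensional homology over $\kappa(\m_j)$. Flat base change identifies this with $\Lotimes{X}{\kappa(\m_j)}$, and the CRT decomposition $\Lotimes{X}{R/\fb} \simeq \bigoplus_j \Lotimes{X}{\kappa(\m_j)}$ then yields $\Lotimes{X}{R/\fb} \in \catdf(R)$, completing $\fb$-adic finiteness.

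For the ``if'' direction, given $X$ is $\fb$-adically finite with $\fb$ of finite colength, $\VE(\fb) = \{\m_1, \ldots, \m_k\}$ is a finite set of maximal ideals, and pairwise comaximality gives $\fb R_{\m_j} = \m_j R_{\m_j}$. Applying Theorem~\ref{lem151009d} to the flat localization $R \to R_{\m_j}$ yields that $X_{\m_j}$ is $\m_j R_{\m_j}$-adically finite over $R_{\m_j}$; Example~\ref{ex160206a}\eqref{ex160206a3} then gives each $\HH_i(X_{\m_j}) = \HH_i(X)_{\m_j}$ artinian over $R_{\m_j}$, hence over $R$. Fact~\ref{cor130528a} ensures each $\HH_i(X)$ is $\fb$-torsion, so $\Supp_R\HH_i(X) \subseteq \{\m_1, \ldots, \m_k\}$; the resulting decomposition $\HH_i(X) = \bigoplus_j \HH_i(X)_{\m_j}$ exhibits $\HH_i(X)$ as a finite direct sum of artinian $R$-modules, hence artinian.

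The main point to verify is the interaction of CRT with the derived tensor product and flat base change, namely $\Lotimes{X}{R/\fb} \simeq \bigoplus_j \Lotimes{X}{\kappa(\m_j)} \simeq \bigoplus_j \Lotimes[R_{\m_j}]{X_{\m_j}}{\kappa(\m_j)}$; this is where Theorem~\ref{lem151009d}, Example~\ref{ex160206a}\eqref{ex160206a3}, and Fact~\ref{cor130528a} must interlock. The trivial case $X \simeq 0$ is handled by taking $\fb$ to be any maximal ideal.
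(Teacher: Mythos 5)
Your overall strategy is essentially the paper's: both directions reduce to the local statement of Example~\ref{ex160206a}\eqref{ex160206a3} by localizing at the finitely many relevant maximal ideals, with Theorem~\ref{lem151009d} driving the converse; your verification of finiteness in the forward direction via condition~\eqref{cor130612a2} of Fact~\ref{thm130612a} and the decomposition $\Lotimes X{R/\fb}\simeq\bigoplus_j\Lotimes[R_{\m_j}]{X_{\m_j}}{\kappa(\m_j)}$ is a harmless variant of the paper's cone/truncation argument for the Koszul condition.

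There is, however, a genuine misstep in the converse direction: the assertion that ``pairwise comaximality gives $\fb R_{\m_j}=\m_jR_{\m_j}$'' is false in general. Finite colength only forces $\rad{\fb}=\bigcap_j\m_j$, not $\fb=\bigcap_j\m_j$; already $\fb=\m^2$ over a local ring $(R,\m)$ has $\fb R_\m=\m^2R_\m\neq\m R_\m$. Thus Theorem~\ref{lem151009d} only tells you that $X_{\m_j}$ is $\fb R_{\m_j}$-adically finite, and you cannot yet invoke Example~\ref{ex160206a}\eqref{ex160206a3}, which needs $\m_jR_{\m_j}$-adic finiteness. The repair is exactly the step the paper takes first: adic finiteness depends only on the radical of the ideal, by \cite[Proposition~7.1]{sather:scc}, so one may replace $\fb$ by $\rad{\fb}=\bigcap_j\m_j$ at the outset, after which your identity does hold. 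Two further points are asserted where they deserve justification, since they are precisely where the paper's citations do work: in the forward direction, the claim that each $\HH_i(X)_{\m_j}$ is artinian over $R_{\m_j}$ rests on the decomposition of an artinian module into its $\m_j$-torsion pieces, which the paper gets from \cite[Proposition~1.4]{sharp:msamaab}; and in the converse, passing from ``artinian over $R_{\m_j}$'' together with $\Supp_R(\HH_i(X))\subseteq\{\m_1,\dots,\m_k\}$ to ``artinian over $R$'' is what the paper delegates to \cite[Lemma~3.2]{kubik:hamm2} (your direct-sum decomposition works, but you should note that these localizations are $\m_j$-torsion, so their $R$- and $R_{\m_j}$-submodules coincide).
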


\begin{proof}
Assume first that each $R$-module $\HH_i(X)$ is artinian.
Since we have $X\in\catdb(R)$, a result of Sharp~\cite[Proposition~1.4]{sharp:msamaab} implies that there is a finite list $\m_1,\ldots,\m_n$ of maximal ideals of $R$ such that
$\HH_i(X)\cong\oplus_{j=1}^n\Gamma_{\m_j}(\HH_i(X))$ for all $i$. 
Set $\fb=\bigcap_{j=1}^n\m_j$, which has finite colength. 
The above isomorphism implies that each module $\HH_i(X)$ is $\fb$-torsion, so we have $\supp_R(X)\subseteq\VE(\fb)$ by Fact~\ref{cor130528a}. 
To show that $X$ is $\fb$-adically finite, it remains to show that $\Lotimes KX\in\catdf(R)$. 
Using a standard cone/truncation argument, it suffices to show this in the case where $X\simeq\HH_0(X)$ is a module. 
Using the above direct sum decomposition, we reduce to the case where the module $X$ is $\m_j$-torsion.
In this case, the module $X_{\m_j}$ has a natural $R_{\m_j}$-module structure and is artinian over both $R$ and $R_{\m_j}$.
By construction, we have $\b R_{\m_j}=\m_j R_{\m_j}$ and $K_{\m_j}\simeq\Lotimes{R_{\m_j}}K$ is the Koszul complex over $R_{\m_j}$ on a finite generating sequence for $\m_j R_{\m_j}$.
Thus, from the local case in Example~\ref{ex160206a}\eqref{ex160206a3}, we conclude that $\Lotimes KX\simeq\Lotimes[R_{\m_j}]{(\Lotimes{R_{\m_j}}K)}{\!\! \!\!X}\in\catdf(R_{\m_j})$.
It follows that $\Lotimes KX$ has homology of finite length over $R_{\m_j}$ and over $R$, so we have $\Lotimes KX\in\catdf(R)$, as desired.

Conversely, assume that $R$ has an ideal $\fb$ of finite colength such that $X$ is $\fb$-adically finite. 
Using~\cite[Proposition~7.1]{sather:scc}, we may replace $\fb$ with $\rad{\fb}$ to assume that $\fb$ is an intersection of finitely many maximal ideals of $R$, say $\fb=\bigcap_{j=1}^n\m_j$.
Theorem~\ref{lem151009d} implies that $\Lotimes{R_{\m_j}}X$ is $\b R_{\m_j}$-adically finite (i.e., $\m_j R_{\m_j}$-adically finite) over $R_{\m_j}$ for each $j$.
Example~\ref{ex160206a}\eqref{ex160206a3} implies that each module $\HH_i(\Lotimes{R_{\m_j}}X)\cong\HH_i(X)_{\m_j}$ is artinian over $R_{\m_j}$. 
Since each module $\HH_i(X)$ is $\fb$-torsion, we have $\Supp_R(\HH_i(X))\subseteq\VE(\fb)=\{\m_1,\ldots,\m_n\}$.
So, we conclude that each $\HH_i(X)$ is artinian over $R$ by~\cite[Lemma~3.2]{kubik:hamm2}.
\end{proof}

\section{Adic Finiteness and Homological Dimensions}\label{sec151206a}

The point of this section is to show that certain homological dimension computations extend from the setting of
homologically finite complexes to the realm of $\fa$-adically finite complexes. 
To begin, we prove Theorem~\ref{prop151115aa} from the introduction.
Note that the special case $Z=\RG aR$ of this result is also well-known, using the ``telescope complex''.
However, the general result showcases the power (or, if one prefers, the restrictiveness) of adic finiteness in general. 

\begin{thm}\label{prop151115a}
Let $X\in\catdb(R)$ be $\fa$-adically finite. The following  are equivalent.
\begin{enumerate}[\rm(i)]
\item\label{prop151115a1}
For all $\m\in\mspec(R)\bigcap\VE(\fa)$, one has $\fd_{R_\m}(X_\m)<\infty$.
\item\label{prop151115a2}
For all $\m\in\mspec(R)\bigcap\VE(\fa)$, one has $\pd_{R_\m}(X_\m)<\infty$.
\item\label{prop151115a3}
One has $\fd_R(X)<\infty$.
\item\label{prop151115a4}
One has $\pd_R(X)<\infty$.
\end{enumerate}
Moreover, one has $\pd_R(X)=\fd_R(X)$.
\end{thm}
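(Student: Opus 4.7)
The plan is to reduce the statement to its analogue for the homologically finite complex $Y := \Lotimes{K}{X}$. The inequalities $\pd_R(-) \geq \fd_R(-)$ and the fact that both dimensions are non-increasing under localization give the chains (iv) $\Rightarrow$ (iii) $\Rightarrow$ (i) and (iv) $\Rightarrow$ (ii) $\Rightarrow$ (i) for free, so the real task is to prove (i) $\Rightarrow$ (iv) and the equality $\pd_R(X) = \fd_R(X)$.

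Since $X$ is $\fa$-adically finite, Fact~\ref{thm130612a} gives $Y \in \catdfb(R)$, and we have $\supp_R(X) \subseteq \VE(\fa)$. The latter is precisely the hypothesis required by Lemma~\ref{lem151206a}, which supplies the key equalities
$$\pd_R(Y) = n + \pd_R(X), \qquad \fd_R(Y) = n + \fd_R(X),$$
and the corresponding local versions at each $\m \in \mspec(R) \cap \VE(\fa)$, noting that $\supp_{R_\m}(X_\m) \subseteq \VE(\fa R_\m)$. Hence it suffices to prove the statement for $Y$, and the equality $\pd_R(X) = \fd_R(X)$ will follow immediately once $\pd_R(Y) = \fd_R(Y)$ is established.

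Now I would analyze $Y$ locally. For $\m \in \mspec(R) \setminus \VE(\fa)$, some generator $x_i$ is invertible in $R_\m$, so $K_\m$ is contractible and $Y_\m \simeq 0$, giving $\pd_{R_\m}(Y_\m) = -\infty$. For $\m \in \mspec(R) \cap \VE(\fa)$, hypothesis (i) gives $\fd_{R_\m}(X_\m) < \infty$, so the local form of Lemma~\ref{lem151206a} gives $\fd_{R_\m}(Y_\m) < \infty$; since $Y_\m \in \catdfb(R_\m)$ over a local Noetherian ring, the classical identity $\fd = \pd$ for homologically finite complexes promotes this to $\pd_{R_\m}(Y_\m) < \infty$. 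To finish, I would invoke the classical global-from-local theorem for $Y \in \catdfb(R)$ over a Noetherian ring, namely that $\pd_R(Y) = \fd_R(Y)$ and that the finiteness of $\pd_R(Y)$ is detected by pointwise local finiteness at the maximal ideals. This delivers $\pd_R(Y) < \infty$, whence $\pd_R(X) < \infty$ and $\pd_R(X) = \fd_R(X)$ follow via the reduction formulas of the previous paragraph.

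The main obstacle is the final invocation: extracting global finiteness of $\pd_R(Y)$ from pointwise local finiteness, which in general would require a uniform bound on $\pd_{R_\m}(Y_\m)$. In our setup the uniformity is supplied by the interplay of $Y \in \catdfb(R)$ with the support constraint $\Supp_R(Y) \subseteq \VE(\fa)$: since $\HH(Y)$ consists of finitely generated $\fa$-torsion modules, the classical relation $\pd_R(Y) = \sup_\m \pd_{R_\m}(Y_\m)$ combined with Auslander--Buchsbaum- or Chouinard-type formulas for homologically finite complexes yields the requisite uniform bound.
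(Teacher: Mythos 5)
Your reduction to the homologically finite complex $Y := \Lotimes KX$ via Lemma~\ref{lem151206a} and its localized version is exactly the paper's strategy; routing the hard implication through local $\fd$ and the local identity $\fd=\pd$ for homologically finite complexes, rather than the paper's direct passage from local $\pd$ of $X_\m$ to local $\pd$ of $Y_\m$, is a harmless variant.

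The one genuine soft spot is your final paragraph, where you assert that the needed uniform bound on $\pd_{R_\m}(Y_\m)$ comes from Auslander--Buchsbaum or Chouinard-type formulas together with the support constraint. That is not correct as a justification: Auslander--Buchsbaum bounds $\pd_{R_\m}(Y_\m)$ by $\depth R_\m$, which need not be uniformly bounded over a Noetherian ring (Nagata's regular ring of infinite Krull dimension), and the constraint $\Supp_R(Y)\subseteq\VE(\fa)$ does not by itself cap those depths; Chouinard's formula likewise returns a supremum that could a priori be infinite. What actually supplies the bound is that $Y\in\catdfb(R)$ admits a degree-wise finite free resolution: for each $d$ the locus $\{\p\in\spec(R)\mid\pd_{R_\p}(Y_\p)\leq d\}$ is open (being the free locus of the $d$-th cokernel of that resolution), the local finiteness hypothesis makes these loci an open cover of $\spec(R)$, and quasi-compactness forces one of them to equal $\spec(R)$. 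This is precisely the standard local-to-global fact for bounded homologically finite complexes that the paper invokes tersely when it writes \emph{from the condition $\Lotimes KX\in\catdfb(R)$, we have the finiteness}. With that replacement, your argument is complete and essentially identical to the paper's.
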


\begin{proof}
\eqref{prop151115a3}$\iff$\eqref{prop151115a4}
By Lemma~\ref{lem151206a}, we have
the first and last equalities next.
\begin{align*}
n+\fd_R(X)
&=\fd_R(\Lotimes KX)
=\pd_R(\Lotimes KX)
=n+\pd_R(X)
\end{align*}
The second equality is from the assumption $\Lotimes KX\in\catdfb(R)$.

From  Theorem~\ref{lem151009d} we know that for all $\m\in\mspec(R)\bigcap\VE(\fa)$, the $R_\m$-complex $X_\m\simeq\Lotimes{R_\m}X$ is $\fa R_\m$-adically finite. 
Thus, the equivalence of conditions~\eqref{prop151115a1} and~\eqref{prop151115a2} follows from what we have just shown.
The implication~\eqref{prop151115a4}$\implies$\eqref{prop151115a2} is from~\cite[Proposition~5.1(P)]{avramov:hdouc}, so it remains to prove
the implication~\eqref{prop151115a2}$\implies$\eqref{prop151115a4}.

Assume that for all $\m\in\mspec(R)\bigcap\VE(\fa)$, one has $\pd_{R_\m}(X_\m)<\infty$.
It follows that for all $\m\in\mspec(R)\bigcap\VE(\fa)$, we have
$$\pd_{R_\m}((\Lotimes KX)_\m)=\pd_{R_\m}(\Lotimes[R_\m]{K_\m}{X_\m})<\infty.$$
From the condition $\Lotimes KX\in\catdfb(R)$, we have the finiteness in the next display
$$\pd_R(X)=\pd_R(\Lotimes KX)-n<\infty$$
and the equality is from Lemma~\ref{lem151206a}.
\end{proof}

\begin{disc}\label{disc151207a}
It is worth noting that in conditions~\eqref{prop151115a1} and~\eqref{prop151115a2} of Proposition~\ref{prop151115b}
one can replace $\mspec(R)\bigcap\VE(\fa)$ with other sets, e.g., $\mspec(R)\bigcap\supp_R(X)$ or $\supp_R(X)$ or $\VE(\fa)$ or $\mspec(R)$. 
Indeed, the fact that $X$ is $\fa$-adically finite implies that $\supp_R(X)=\Supp_R(X)$, by~\cite[Theorem~7.11]{sather:scc}.
Since $X_{\p}\simeq 0$ if and only if $\p\notin\Supp_R(X)=\supp_R(X)\subseteq\VE(\fa)$, it is straightforward to show that $\mspec(R)\bigcap\VE(\fa)$ can be replaced by 
any of the sets listed.

Similarly, in the next result,
one can replace $\spec(R)$ with $\supp_R(X)$ or $\VE(\fa)$; in the expression $\sup\{\pd_{R_\p}(X_\p)\mid\p\in\spec(R)\}$, one can even replace $\spec(R)$
with $\mspec(R)$ or $\mspec(R)\bigcap\VE(\fa)$ or $\mspec(R)\bigcap\supp_R(X)$.
This result extends~\cite[Proposition~5.3.P]{avramov:hdouc} to our setting.
\end{disc}

\begin{prop}\label{prop151115b}
Let $X\in\catdb(R)$ be $\fa$-adically finite. 
Then we have
\begin{align*}
\pd_R(X)
&=\sup\{-\inf(\Rhom{X}{R/\p})\mid\p\in\spec(R)\}\\
&=\sup\{-\inf(\Rhom[R_\p]{X_{\p}}{\kappa(\p)})\mid\p\in\spec(R)\}\\
&=\sup\{\pd_{R_\p}(X_\p)\mid\p\in\spec(R)\}.
\end{align*}
\end{prop}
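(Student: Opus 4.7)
The plan is to reduce each equality to its classical counterpart \cite[Proposition~5.3.P]{avramov:hdouc} for the homologically finite complex $\Lotimes KX \in \catdfb(R)$, and then strip off the Koszul factor to recover the statement for $X$. Specifically, since $X$ is $\fa$-adically finite we have $\Lotimes KX \in \catdfb(R)$ and $\supp_R(\Lotimes KX) = \supp_R(K) \cap \supp_R(X) \subseteq \VE(\fa)$, so Lemma~\ref{lem151206a} yields $\pd_R(\Lotimes KX) = n + \pd_R(X)$. Applying the classical result to $\Lotimes KX$ produces three supremum expressions for $\pd_R(\Lotimes KX)$, and it suffices to show that each equals $n$ plus the corresponding expression in our statement.

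For the first equality, I would combine Hom-tensor adjunction with the self-dual nature of $K$ from Remark~\ref{disc151211a} to obtain
$\Rhom{\Lotimes KX}{R/\p} \simeq \Rhom K{\Rhom X{R/\p}} \simeq \shift^{-n}\Lotimes K{\Rhom X{R/\p}}.$
Since $\cosupp_R(\Rhom X{R/\p}) = \supp_R(X) \cap \cosupp_R(R/\p) \subseteq \VE(\fa)$ by Fact~\ref{cor130528a}, Lemma~\ref{lem150604a2}\eqref{lem150604a2z} yields $\inf(\Lotimes K{\Rhom X{R/\p}}) = \inf(\Rhom X{R/\p})$; hence $\inf(\Rhom{\Lotimes KX}{R/\p}) = -n + \inf(\Rhom X{R/\p})$ for every $\p$, and taking suprema of the negatives over $\p \in \spec(R)$ and subtracting $n$ finishes this equality.

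For the second and third equalities I would argue locally at each $\p$. When $\p \in \VE(\fa)$, Theorem~\ref{lem151009d} makes $X_\p$ an $\fa R_\p$-adically finite $R_\p$-complex, so Lemma~\ref{lem151206a} over $R_\p$ gives $\pd_{R_\p}((\Lotimes KX)_\p) = n + \pd_{R_\p}(X_\p)$; the middle expression is handled by the same argument as in the previous paragraph run over $R_\p$, using that $\cosupp_{R_\p}(\kappa(\p)) \subseteq \VE(\fa R_\p)$ to apply Lemma~\ref{lem150604a2}\eqref{lem150604a2z} to $\Rhom[R_\p]{X_\p}{\kappa(\p)}$. When $\p \notin \VE(\fa)$, some $x_i$ becomes a unit in $R_\p$, so $K_\p \simeq 0$ and therefore $(\Lotimes KX)_\p \simeq 0$; correspondingly $X_\p \simeq 0$ because $\Supp_R(X) = \supp_R(X) \subseteq \VE(\fa)$ by Remark~\ref{disc151207a}, so both comparison terms vanish and contribute $-\infty$ to their respective suprema. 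The main delicate point is precisely this treatment of primes outside $\VE(\fa)$: it is where $\fa$-adic finiteness, via the equality $\Supp_R(X) = \supp_R(X)$ from \cite[Theorem~7.11]{sather:scc}, is essential, since without it the Koszul-side vanishing at such primes would not match the target side.
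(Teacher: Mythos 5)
Your proposal is correct, and its backbone coincides with the paper's: reduce to the homologically finite complex $\Lotimes KX\in\catdfb(R)$, use Lemma~\ref{lem151206a} to get $\pd_R(\Lotimes KX)=n+\pd_R(X)$, and strip off the Koszul factor via adjunction, the self-duality of $K$, and Lemma~\ref{lem150604a2}\eqref{lem150604a2z}; your handling of the first two equalities is essentially the paper's argument. The genuine divergence is the third equality: you prove the termwise identity $\pd_{R_\p}((\Lotimes KX)_\p)=n+\pd_{R_\p}(X_\p)$ for $\p\in\VE(\fa)$ (via Theorem~\ref{lem151009d} and Lemma~\ref{lem151206a} over $R_\p$) together with vanishing off $\VE(\fa)$, whereas the paper never localizes adic finiteness here; it closes this equality with a sandwich, bounding $\sup_\p\{-\inf(\Rhom[R_\p]{X_\p}{\kappa(\p)})\}$ by $\sup_\p\pd_{R_\p}(X_\p)$ via \cite[Corollary~2.5.P]{avramov:hdouc} over $R_\p$ and then by $\pd_R(X)$. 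Consequently your route quietly relies on the classical local-global formula $\pd_R(C)=\sup_\p\pd_{R_\p}(C_\p)$ for the homologically finite complex $C=\Lotimes KX$ (true and standard, but it should be cited or deduced, since \cite[Proposition~5.3.P]{avramov:hdouc} is only invoked by the paper for the $R/\p$ and $\kappa(\p)$ expressions), while the paper's sandwich avoids needing it. Two small repairs: the hypothesis of Lemma~\ref{lem151206a} is $\supp_R(X)\subseteq\VE(\fa)$ (immediate from adic finiteness), not the containment for $\supp_R(\Lotimes KX)$ that you quote; and your appeal to $\cosupp_{R_\p}(\kappa(\p))\subseteq\VE(\fa R_\p)$ is true but unjustified as written---either prove it (e.g.\ $\kappa(\p)\simeq\Rhom[R_\p]{\kappa(\p)}{E_{R_\p}(\kappa(\p))}$ and Fact~\ref{cor130528a} give $\cosupp_{R_\p}(\kappa(\p))\subseteq\{\p R_\p\}$) or, as the paper does, feed $\supp_{R_\p}(X_\p)\subseteq\VE(\fa R_\p)$ from Lemma~\ref{lem151009b} into the intersection formula. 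Finally, for $\p\notin\VE(\fa)$ you do not actually need $\supp_R(X)=\Supp_R(X)$: the homology of $X$ is $\fa$-torsion by Fact~\ref{cor130528a}, so $X_\p\simeq0$ already follows from $\supp_R(X)\subseteq\VE(\fa)$.
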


\begin{proof}
Let $\p\in\spec(R)$ be given. As in the proof of Lemma~\ref{lem151206a}, we have
$$\cosupp_R(\Rhom X{R/\p})\subseteq\supp_R(X)\subseteq\VE(\fa)$$
so we conclude that
\begin{align*}
\inf(\Rhom{\Lotimes KX}{R/\p})
&=\inf(\Rhom K{\Rhom X{R/\p}})\\
&=\inf(\shift^{-n}\Lotimes K{\Rhom X{R/\p}})\\
&=-n+\inf(\Lotimes K{\Rhom X{R/\p}})\\
&=-n+\inf(\Rhom X{R/\p}).
\end{align*}
This explains the third step in the next sequence
\begin{align*}
n+\pd_R(X)
&=\pd_R(\Lotimes KX)\\
&=\sup\{-\inf(\Rhom{\Lotimes KX}{R/\p})\mid\p\in\spec(R)\}\\
&=n+\sup\{-\inf(\Rhom X{R/\p})\mid\p\in\spec(R)\}\\
&\leq n+\sup\{-\inf(\Rhom X{N})\mid\text{$N$ is an $R$-module}\}\\
&=n+\pd_R(X)
\end{align*}
The first step is from Lemma~\ref{lem151206a}, and the second step is from~\cite[Proposition~5.3.P]{avramov:hdouc},
using the assumption $\Lotimes KX\in\catdfb(R)$.
The fourth step is routine, and the last one is from~\cite[Corollary~2.5.P]{avramov:hdouc}.
This explains the first equality in our result. 

For the other equalities in the statement of our result,
we use the condition $\supp_{R_\p}(X_\p)\subseteq\VE(\fa R_\p)$ from Lemma~\ref{lem151009b}
to compute similarly
\begin{align*}
-\inf(\Rhom[R_\p]{(\Lotimes KX)_{\p}}{\kappa(\p)})
&= -\inf(\Rhom[R_\p]{\Lotimes[R_{\p}] {K_{\p}}{X_{\p}}}{\kappa(\p)}) \\
&= -\inf(\Lotimes[R_{\p}] {\shift^{-n}K_{\p}}{\Rhom[R_\p]{X_{\p}}{\kappa(\p)}}) \\
&=n-\inf(\Rhom[R_\p]{X_{\p}}{\kappa(\p)})
\end{align*}
and from this we have
\begin{align*}
n+\pd_R(X)\hspace{-10mm} \\
&=\pd_R(\Lotimes KX)\\
&=\sup\{-\inf(\Rhom[R_\p]{(\Lotimes KX)_{\p}}{\kappa(\p)})\mid\p\in\spec(R)\}\\
&=n+\sup\{-\inf(\Rhom[R_\p]{X_{\p}}{\kappa(\p)})\mid\p\in\spec(R)\}\\
&\leq n+\sup\{-\inf(\Rhom[R_\p]{X_{\p}}{N})\mid\text{$\p\in\spec(R)$ and $N$ is an $R_\p$-module}\}\\
&=n+\sup\{\pd_{R_{\p}}(X_{\p})\mid\p\in\spec(R)\} \\
&\leq n+\pd_R(X)
\end{align*}
and hence the desired result.
\end{proof}

In the next result, we tackle~\cite[Proposition~5.5]{avramov:hdouc}.
Note that the local assumption on $\vf$ implies that $\fa S\neq S$.

\begin{prop}\label{prop151207a}
Let $\vf\colon(R,\m,k)\to S$ be a local homomorphism, and let $Y\in\catdb(S)$ be $\fa S$-adically finite.
\begin{enumerate}[\rm(a)]
\item \label{prop151207a1}
There are equalities
\begin{align*}
\fd_R(Y)&=\sup(\Lotimes kY) &
\id_R(Y)&=-\inf(\Rhom kY).
\end{align*}
\item \label{prop151207a2}
If $\vf$ is the identity map, then
$$\pd_R(Y)=-\inf(\Rhom Yk).$$
\end{enumerate}
\end{prop}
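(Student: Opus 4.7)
The proof of all three identities follows a common template. Since $Y$ is $\fa S$-adically finite over $S$, the complex $\Lotimes KY$ (which coincides with $\Lotimes[S]{K^S(\vf(\x))}{Y}$ after restriction of scalars along $\vf$) lies in $\catdfb(S)$, so the classical Avramov--Foxby formulas~\cite[Proposition~5.5]{avramov:hdouc} apply to it over $R$ via $\vf$. Lemma~\ref{lem151008a} also gives $\supp_R(Y) \subseteq \VE(\fa)$. The computational heart of the argument is the identity $\Lotimes kK \simeq \bigoplus_{i=0}^n(\shift^i k)^{\binom{n}{i}}$, which holds because each $x_j \in \fa \subseteq \m$ maps to zero in $k$. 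Combined with the self-duality $K \simeq \shift^n\Rhom KR$ from Remark~\ref{disc151211a}, this yields
\[
\sup(\Lotimes k{\Lotimes KY}) = n + \sup(\Lotimes kY), \qquad \inf(\Rhom k{\Lotimes KY}) = \inf(\Rhom kY),
\]
and similarly $\inf(\Rhom{\Lotimes KY}k) = -n + \inf(\Rhom Yk)$, by expanding the finite direct sum and using $\Rhom{\shift^i k}{Y} \simeq \shift^{-i}\Rhom{k}{Y}$.

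For part (a), combining the classical $\fd_R(\Lotimes KY) = \sup(\Lotimes k{\Lotimes KY})$ with $\fd_R(\Lotimes KY) = n + \fd_R(Y)$ from Lemma~\ref{lem151206a} yields $\fd_R(Y) = \sup(\Lotimes kY)$. For the injective dimension, the classical identity $\id_R(\Lotimes KY) = -\inf(\Rhom k{\Lotimes KY}) = -\inf(\Rhom kY)$ reduces the problem to showing $\id_R(\Lotimes KY) = \id_R(Y)$; this follows from Lemma~\ref{lem151206b} applied with $L = K$, combined with $\Rhom KY \simeq \shift^{-n}\Lotimes KY$ and $\id_R(\shift^j Z) = \id_R(Z) - j$, which together give $n + \id_R(Y) = \id_R(\Rhom KY) = n + \id_R(\Lotimes KY)$. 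For part (b), with $\vf = \id$ and $S = R$, the classical local formula $\pd_R(Z) = -\inf(\Rhom Zk)$ applied to $\Lotimes KY \in \catdfb(R)$, together with the computation of $\inf(\Rhom{\Lotimes KY}k)$ above, gives $\pd_R(\Lotimes KY) = n - \inf(\Rhom Yk)$; comparing with $\pd_R(\Lotimes KY) = n + \pd_R(Y)$ from Lemma~\ref{lem151206a} yields $\pd_R(Y) = -\inf(\Rhom Yk)$.

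The main obstacle is the injective-dimension step, since the hypothesis $\cosupp_R(Y) \subseteq \VE(\fa)$ of Lemma~\ref{lem151206b} is not automatic from $\fa S$-adic finiteness (for instance, the Pr\"ufer module $\bbz_{(p)}[1/p]/\bbz_{(p)}$ over $R = \bbz_{(p)}$ is $(p)$-adically finite but has cosupport all of $\spec R$). To close this gap one either verifies the cosupport containment in the present local-homomorphism setting by a separate argument, or replaces the appeal to Lemma~\ref{lem151206b} by a Chouinard--Iyengar-type local--global formula for $\id_R$: since $\supp_R(\Lotimes KY) = \supp_R(Y)$ and the local widths $\width_{\p R_\p}((\Lotimes KY)_\p) = \width_{\p R_\p}(Y_\p)$ agree at every $\p \in \VE(\fa)$, one obtains $\id_R(Y) = \id_R(\Lotimes KY)$ directly.
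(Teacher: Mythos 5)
Your treatment of the flat and projective dimension formulas is correct and is exactly the paper's intended argument: the paper's proof is the one-line reduction ``argue as in the first paragraph of the proof of Theorem~\ref{prop151115a}, using Lemmas~\ref{lem151206a}--\ref{lem151206b} to reduce to \cite[Proposition~5.5]{avramov:hdouc}'', and your computations with $\Lotimes kK\simeq\bigoplus_{i=0}^n(\shift^ik)^{\binom ni}$ and the self-duality of $K$ supply precisely the details that reduction leaves implicit. Your observation about the injective dimension step is also a genuinely good catch: Lemma~\ref{lem151206b} requires $\cosupp_R(Y)\subseteq\VE(\fa)$, and this does \emph{not} follow from $\fa S$-adic finiteness --- your Pr\"ufer example is correct, and (taking $\vf=\id_R$, $\fa=(p)$) it even satisfies all hypotheses of the proposition, so the paper's terse proof glosses over exactly the point you flag.

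The problem is that neither of your proposed repairs closes this gap. The first option, ``verify the cosupport containment in the present local-homomorphism setting,'' is ruled out by your own example: with $R=S=\bbz_{(p)}$, $\vf=\id$ and $Y$ the Pr\"ufer module, the hypotheses of the proposition hold while $\cosupp_R(Y)=\spec(R)\not\subseteq\VE(\fa)$, so there is no ``separate argument'' to be had. The second option misuses the Chouinard-type formula: the equality $\id_R(W)=\sup_{\p}\{\depth R_\p-\width_{\p R_\p}(W_\p)\}$ is only valid once $\id_R(W)$ is known to be finite (in general one only has $\geq$; e.g., $W=R$ over a non-Gorenstein local ring has infinite injective dimension but finite Chouinard invariant). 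Your computation that $\supp_R(\Lotimes KY)=\supp_R(Y)$ and that the local widths agree on $\VE(\fa)$ therefore shows only that the two Chouinard invariants coincide; it does not yield $\id_R(Y)=\id_R(\Lotimes KY)$ ``directly.'' The easy inequality $\id_R(\Lotimes KY)\leq\id_R(Y)$ holds because $\Lotimes KY$ is finitely built from nonnegative shifts of $Y$, so the entire content of the missing step is that finiteness of $\id_R(\Lotimes KY)$ forces finiteness of $\id_R(Y)$ under the hypothesis $\supp_R(Y)\subseteq\VE(\fa)$ --- and that is exactly what neither your argument nor a literal reading of the paper's citation of Lemma~\ref{lem151206b} establishes. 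So the identity $\id_R(Y)=-\inf(\Rhom kY)$ remains unproven in your proposal; a correct treatment needs either a version of Lemma~\ref{lem151206b} with the cosupport hypothesis replaced by a support hypothesis (which requires a new argument), or some other mechanism for descending finiteness of injective dimension from $\Lotimes KY$ to $Y$.
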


\begin{proof}
Argue as in the first paragraph of the proof of Theorem~\ref{prop151115a}, using 
Lemmas~\ref{lem151206a}--\ref{lem151206b} to reduce to the homologically finite case of~\cite[Proposition~5.5]{avramov:hdouc}.
\end{proof}

\section*{Acknowledgments}
We are grateful to Srikanth Iyengar, 
Liran Shaul,
and Amnon Yekutieli
for helpful comments about this work.

\providecommand{\bysame}{\leavevmode\hbox to3em{\hrulefill}\thinspace}
\providecommand{\MR}{\relax\ifhmode\unskip\space\fi MR }
\providecommand{\MRhref}[2]{%
  \href{http://www.ams.org/mathscinet-getitem?mr=#1}{#2}
}
\providecommand{\href}[2]{#2}

\end{document}